\newcommand{\rk}{{\rm rk}}
\newcommand{\codim}{{\rm codim}}
\newtheorem{lemma1}{}[section]
\newenvironment{lemma}{\begin{lemma1}{\bf Lemma.}}{\end{lemma1}}
\newenvironment{example}{\begin{lemma1}{\bf Example.}\rm}{\end{lemma1}}
\newenvironment{theorem}{\begin{lemma1}{\bf Theorem.}}{\end{lemma1}}
\newenvironment{proposition}{\begin{lemma1}{\bf Proposition.}}{\end{lemma1}}
\newenvironment{corollary}{\begin{lemma1}{\bf Corollary.}}{\end{lemma1}}
\newenvironment{remark}{\begin{lemma1}{\bf Remark.}\rm}{\end{lemma1}}
\newenvironment{definition}{\begin{lemma1}{\bf Definition.}}{\end{lemma1}}
\newenvironment{notation}{\begin{lemma1}{\bf Notation.}}{\end{lemma1}}
\newenvironment{setup}{\begin{lemma1}{\bf Setup.}}{\end{lemma1}}
\newenvironment{fact}{\begin{lemma1}{\bf Fact.} \rm}{\end{lemma1}}
\newenvironment{the local obstruction - setup}{\begin{lemma1}{\bf The local obstruction - setup.}}{\end{lemma1}}
\newenvironment{remark*}{{\bf Remark.}}{}
\newenvironment{remarks*}{{\bf Remarks.}}{}
\newenvironment{example*}{{\bf Example.}}{}
\newenvironment{assumption*}{{\bf Assumption.}}{}
\newcommand{\R}{\ensuremath{\mathbb{R}}}
\newcommand{\Q}{\ensuremath{\mathbb{Q}}}
\newcommand{\Z}{\ensuremath{\mathbb{Z}}}
\newcommand{\C}{\ensuremath{\mathbb{C}}}
\newcommand{\N}{\ensuremath{\mathbb{N}}}
\newcommand{\PP}{\ensuremath{\mathbb{P}}}
\newcommand{\merom}[3]{\ensuremath{#1:#2 \dashrightarrow #3}}
\newcommand{\holom}[3]{\ensuremath{#1:#2  \rightarrow #3}}
\newcommand{\fibre}[2]{\ensuremath{#1^{-1} (#2)}}
\newcommand\sE{{\mathcal E}}
\newcommand\sF{{\mathcal F}}
\newcommand\sL{{\mathcal L}}
\newcommand\sO{{\mathcal O}}
\newcommand\sS{{\mathcal S}}
\DeclareMathOperator*{\pic}{Pic}
\DeclareMathOperator*{\sing}{sing}
\DeclareMathOperator*{\Supp}{Supp}
\DeclareMathOperator*{\supp}{Supp}
\newcommand{\NE}[1]{ \ensuremath{ \overline { \mbox{NE} }(#1)} }
\author{Andreas H\"oring}
\author{Thomas Peternell}
\address{Andreas H\"oring, Universit\'e C\^ote d'Azur, CNRS, LJAD, France, Institut universitaire de France}
\email{Andreas.Hoering@univ-cotedazur.fr}
\address{Thomas Peternell, Mathematisches Institut, Universit\"at Bayreuth, 95440 Bayreuth, 
Germany}
\email{thomas.peternell@uni-bayreuth.de}
\subjclass[2010]{14D06, 14J45, 14E05, 14E30}
\keywords{}
\title{Klt degenerations of projective spaces } 
\date{\today} 
\begin{document}

\begin{abstract} 
We study degenerations of complex projective spaces $\PP^n$ into normal projective klt varieties $X$. If the tangent sheaf of $X$ is
semi-stable, we show that $X$ itself is a projective space. 
If $X$ is a threefold with canonical singularities, we show that there are only three possible families of varieties with this property. 
\end{abstract} 

\maketitle

\section{Introduction}

\subsection{Motivation and main results}

A famous theorem of Hirzebruch and Kodaira, \cite{HK57} states that a compact K\"ahler manifold  $X$ which is diffeomorphic to $\mathbb P^n$
is actually biholomorpic to $\mathbb P^n$, unless possibly  $n$ is even  and $K_X$ is ample. This last case is ruled out by Yau's theorem \cite{Yau77}; the existence of a
K\"ahler-Einstein metric.  As a corollary, one obtains the deformation invariance of $\mathbb P^n$:

\begin{theorem}  \label{thm:HK} Let $\pi: \mathcal X \to \Delta $ be a proper projective  submersion to the  unit disc in $\mathbb C$, such that 
$\mathcal X_t = \pi^{-1}(t) \simeq \mathbb P^n$ for $t \ne 0$. Then $\mathcal X_0 \simeq \mathbb P^n$. 
\end{theorem}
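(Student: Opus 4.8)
The plan is to deduce the statement from the two cited results, using Ehresmann's fibration theorem to bridge the differentiable and the holomorphic categories. First I would note that, as $\pi$ is a proper $C^\infty$ submersion over the contractible disc $\Delta$, Ehresmann's theorem shows the family to be differentiably a product; in particular the central fibre $X := \mathcal X_0$ is diffeomorphic to a general fibre $\mathcal X_t \simeq \mathbb P^n$. Since $\pi$ is projective, $X$ is moreover a smooth projective, hence compact K\"ahler, manifold. We are thus exactly in the setting of the theorem of Hirzebruch and Kodaira, applied to $X$.

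Invoking \cite{HK57} then yields $X \simeq \mathbb P^n$ at once, except possibly when $n$ is even and $K_X$ is ample. The entire substance of the proof therefore lies in excluding this single exceptional case; everything before it is formal.

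To exclude it I would compare characteristic numbers with those of $\mathbb P^n$. Being diffeomorphic to $\mathbb P^n$, the manifold $X$ has cohomology ring $H^\bullet(X,\mathbb Z) \cong H^\bullet(\mathbb P^n,\mathbb Z)$ and the same Pontryagin classes. Writing $h$ for the positive generator of $H^2(X,\mathbb Z)$, one finds $c_1(X) = \pm(n+1) h$, whence $c_1^2 = (n+1)^2 h^2$, and then $p_1 = c_1^2 - 2 c_2$ pins down $c_2(X) = \binom{n+1}{2} h^2$, exactly as for $\mathbb P^n$. Consequently $X$ realises the borderline relation
\[
\Bigl( \tfrac{2(n+1)}{n}\, c_2(X) - c_1(X)^2 \Bigr)\, c_1(X)^{n-2} = 0,
\]
i.e. it attains equality in the Miyaoka--Yau inequality. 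In the exceptional case $K_X$ is ample, so Yau's solution of the Calabi conjecture \cite{Yau77} furnishes a K\"ahler--Einstein metric of negative Ricci curvature on $X$; in this canonically polarized regime, equality in Miyaoka--Yau forces $X$ to be a compact ball quotient $\mathbb B^n / \Gamma$, with $\Gamma \subset \Aut(\mathbb B^n)$ discrete, torsion-free and cocompact. Such an $X$ has the ball as universal cover, so $\pi_1(X) = \Gamma$ is infinite, contradicting the simple connectivity of $X$ (diffeomorphic to $\mathbb P^n$). Hence the exceptional case cannot occur and $\mathcal X_0 \simeq \mathbb P^n$.

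The main obstacle is exactly this exclusion: topology alone does not distinguish $\mathbb P^n$ from a hypothetical canonically polarized manifold sharing its Chern numbers, and it is Yau's theorem that breaks the symmetry by producing the K\"ahler--Einstein metric and thereby activating the rigidity of the Miyaoka--Yau equality. I would take care to phrase the Chern-number computation solely through genuine diffeomorphism invariants --- the cohomology ring, the Pontryagin and Euler classes --- rather than through the Chern classes directly, and to apply the equality case of Miyaoka--Yau in the correct negatively curved setting.
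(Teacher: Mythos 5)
Your proposal is correct and takes essentially the same route as the paper: Ehresmann's theorem reduces the statement to the Hirzebruch--Kodaira theorem for the central fibre, with Yau's theorem excluding the exceptional case of even $n$ and $K_{\mathcal X_0}$ ample. The paper simply cites \cite{HK57} and \cite{Yau77} for these two steps, and your expansion of the Yau step (Kähler--Einstein metric, equality in the Miyaoka--Yau inequality forced by the diffeomorphism-invariant Chern data, ball-quotient rigidity contradicting simple connectivity) is exactly the standard argument behind that citation.
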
 

In fact, by Ehresmann's theorem, $\mathcal X _0 $ is diffeomorphic to $\mathbb P^n$. 
In Theorem \ref{thm:HK}, it is actually not necessary to assume that $X$ is projective, as shown by Siu \cite{Siu89,Siu92}, see also \cite{Hwa96}. 

In this paper we study singular degenerations of $\mathbb P^n$ (see Definition \ref{def1} for the setup), namely we suppose that the central fiber is a normal projective variety with a restricted type of singularities, 
so-called klt varieties, see e.g. \cite{KM98}, which play an important role in the minimal model theory. 
In this setting, the central fiber $X_0$ is of course no longer diffeomorphic to $\mathbb P^n$, and one cannot expect $X_0 $ to be a projective space (cf. Example \ref{example-degeneration-cone}). Degenerations of $\PP^2$  have been studied and classified by 
Manetti \cite{Man91} and in a more general context by Hacking-Prokhorov \cite{HP10}. 
 
Despite of the existence of many singular degenerations of projective space,  we have the following rigidity result. 
 
 \begin{theorem} \label{theorem-stability}
 Let $X$ be a normal projective klt degeneration of $\PP^n$. If $T_X$ is semistable, then $X \simeq \PP^n$.
\end{theorem}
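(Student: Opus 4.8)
First I would record the numerical data forced by the degeneration. Since ampleness is open and $-K_{\mathcal X/\Delta}$ is $\mathbb Q$-Cartier, the central fibre $X$ is a klt $\mathbb Q$-Fano variety of dimension $n$, and flatness keeps the anticanonical volume constant, so $(-K_X)^n=(n+1)^n$. The crucial preliminary is that $\rho(X)=1$: as $X=\pi^{-1}(0)$ is cut out by the coordinate $t$, hence a principal (Cartier) divisor with trivial normal bundle, while $\mathcal X_{\Delta^*}\cong\PP^n\times\Delta^*$ has class group generated by the hyperplane class, the restriction of class groups identifies $N^1(X)_{\mathbb R}$ with $\mathbb R\cdot(-K_X)$. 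Consequently every curve class is proportional to $H^{n-1}$, where $H:=-K_X$, and
\[
\mu(T_X)=\frac{(-K_X)\cdot H^{n-1}}{n}=\frac{(n+1)^n}{n}.
\]

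Next I would bring in rational curves. Being $\mathbb Q$-Fano, $X$ carries a minimal dominating family of rational curves; a general member $C$ is free and passes through general (smooth) points, and I set $\ell:=-K_X\cdot C$, so that $2\le\ell\le n+1$. Along such a $C$ the restriction splits as $T_X|_C\cong\mathcal O(2)\oplus\mathcal O(1)^{\ell-2}\oplus\mathcal O^{\,n-\ell+1}$, and its positive part generates the saturated subsheaf $\mathcal F\subseteq T_X$ (the foliation defined by the family), of rank $\rk\mathcal F=\ell-1$ and with $\deg(\mathcal F|_C)\ge\ell$.

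The heart of the argument is then a slope computation exploiting $\rho(X)=1$. Since $C\equiv\lambda H^{n-1}$ with $\lambda=\ell/(n+1)^n$, I would convert
\[
c_1(\mathcal F)\cdot H^{n-1}=\frac{(n+1)^n}{\ell}\bigl(c_1(\mathcal F)\cdot C\bigr)=\frac{(n+1)^n}{\ell}\deg(\mathcal F|_C)\ge (n+1)^n,
\]
whence $\mu(\mathcal F)\ge (n+1)^n/(\ell-1)$. Semistability of $T_X$ forces $\mu(\mathcal F)\le\mu(T_X)=(n+1)^n/n$, so $\ell-1\ge n$, i.e.\ $\ell\ge n+1$. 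Combined with $\ell\le n+1$ this yields $\ell=n+1$.

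Finally, a minimal dominating family of anticanonical degree $n+1$ means the variety of minimal rational tangents at a general point fills all of $\mathbb P(T_{X,x})$, and the Cho--Miyaoka--Shepherd-Barron characterization of projective space (in the form available for klt $\mathbb Q$-Fano varieties) gives $X\simeq\PP^n$. The main obstacle I anticipate is twofold: establishing $\rho(X)=1$ cleanly, which hinges on surjectivity of the class-group restriction and uses that $X$ has rational singularities; and checking that the rational-curve and foliation machinery — freeness of $C$ through general points, the splitting type, and the singular CMSB statement — applies on the possibly singular klt variety $X$. The slope inequality itself is robust; the delicate points are these geometric inputs and invoking the correct singular characterization of $\PP^n$.
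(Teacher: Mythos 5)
Your argument has a decisive gap at the foliation step, and it is visible already for \emph{smooth} varieties. The positive summands $\sO_{\PP^1}(2)\oplus\sO_{\PP^1}(1)^{\oplus \ell-2}\subset T_X|_C$ along the individual members of a minimal family do not glue to a saturated subsheaf of $T_X$ of rank $\ell-1$: the subsheaf of $T_X$ that they generate has rank equal to one plus the dimension of the linear span of the variety of minimal rational tangents at a general point, which is in general much larger than $\ell-2$. Test your own computation on the smooth quadric $Q_n$, $n\geq 3$: it has $\rho=1$, stable tangent bundle, and its minimal curves are lines with $\ell=-K_{Q_n}\cdot C=n$ and splitting type $\sO(2)\oplus\sO(1)^{\oplus n-2}\oplus\sO$. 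If a saturated subsheaf $\sF\subset T_{Q_n}$ of rank $n-1$ with $\deg(\sF|_C)\geq n$ existed, your slope inequality --- which uses nothing about degenerations beyond $\rho(X)=1$ --- would give $\mu(\sF)\geq \frac{2n^n}{n-1}>\frac{2n^n}{n}=\mu(T_{Q_n})$, contradicting stability. So no such $\sF$ exists: on $Q_n$ the sheaf generated by the positive parts is all of $T_{Q_n}$, because the VMRT (a quadric $Q_{n-2}\subset\PP^{n-1}$) is linearly non-degenerate. Your method can only produce a destabilizing subsheaf when the tangent directions to minimal curves are linearly degenerate, and establishing that degeneracy is precisely the missing content.

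The other inputs are also unavailable on klt varieties, and the paper does not need them. In the basic cone degenerations (Example \ref{ex:basic}) every minimal curve is a ruling line through the vertex, so the general member of the minimal family meets $X_{\sing}$; then $T_X|_C$ is not locally free, $\ell=-K_X\cdot C=(n+1)/d$ is not an integer, and both the bound $2\leq\ell\leq n+1$ (bend-and-break requires deformation estimates valid for l.c.i.\ singularities along the curve) and the splitting type are meaningless. Moreover, the CMSB-type characterization of $\PP^n$ for klt $\Q$-Fano varieties that you invoke at the end is not available in the literature; Cho--Miyaoka--Shepherd-Barron and Kebekus require smoothness. The paper's proof avoids all of this and never considers minimal rational curves: the input you leave unused is $h^0(X,\sO_X(H))\geq n+1$ for the degenerated hyperplane class $H$ with $H^n=1$ (Proposition \ref{prop:semi}). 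The paper studies $\varphi_{|H|}$: if it is generically finite onto its image, the $\Delta$-genus theory (Theorem \ref{thm:delta-0}) forces $H$ to be Cartier and $X\simeq\PP^n$; if not, the fibres of $\varphi_{|H|}$ define a foliation $T_\sF\subset T_X$ of rank $n-m$, where $m=\dim Z<n$, with $c_1(T_\sF)\geq (n-m+1)H$, the key positivity coming from Fujita's result that $K_{Z'}+mA'$ is pseudoeffective on a resolution of the image; this destabilizes $T_X$ since $1+\frac{1}{n-m}>1+\frac{1}{n}$. This is also how the theorem distinguishes $\PP^n$ from $Q_n$ (which likewise has $\rho=1$ and semistable tangent sheaf): everything rests on the combined section-and-volume data $H^n=1$ and $h^0(X,\sO_X(H))\geq n+1$, which a purely intrinsic rational-curves argument cannot see.
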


The variety $X$ has Picard number one by Propsition \ref{proposition-picard-number},
so stability is uniquely defined in terms of the polarisation.
Our technique can be adapted to degenerations of other Fano manifolds with Picard number one,
e.g. it is not very to prove the analogue of Theorem \ref{theorem-stability}
for degenerations of smooth quadrics $Q_n$ of dimension $n \geq 3$.

In case $T_X$ is not semistable, the task is to classify $X$. We will restrict ourselves to degenerations which have canonical singularities and prove

\begin{theorem} \label{theorem-dim3}
Let $X$ be a normal projective canonical degeneration of $\PP^3$.
Then one of the following holds:
\begin{enumerate}
\item $X \simeq \PP^3$; or
\item $X$ is the contraction of the section $\PP(\sO_S)$ in $\PP(\sO_S \oplus \sO_S(-K_S))$  where $S$ is a quadric surface; or
\item $X$ is the variety in Example \ref{the-example}. 
\end{enumerate}
\end{theorem}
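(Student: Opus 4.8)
The plan is to reduce everything to the numerical invariants of $X$ and then split according to the Harder--Narasimhan behaviour of $T_X$. First I would record the structure of the central fibre: as a flat degeneration of $\PP^3$ it is a Fano variety with $\rho(X) = 1$ by Proposition \ref{proposition-picard-number}, and flatness forces $(-K_X)^3 = 64$, $\chi(\sO_X) = 1$ and $h^0(X, -K_X) = 35$; the canonical hypothesis, via adjunction from the total space of the degeneration, makes $K_X$ Cartier, so $X$ is a Gorenstein canonical Fano threefold of Picard number one. Writing $-K_X = rH$ with $H$ the ample generator of $\NS(X)$, the relation $r^{3} H^{3} = 64$ together with the Kobayashi--Ochiai bound $r \le 4$ leaves only $r \in \{1, 2, 4\}$, since $r = 3$ would force $H^{3} = 64/27 \notin \Z$.

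If $T_X$ is semistable, Theorem \ref{theorem-stability} gives $X \simeq \PP^3$, which is conclusion (a); this is exactly the case $r = 4$, where $X \simeq \PP^3$ also follows from the characterisation of projective space by maximal index. So the heart of the argument is the unstable case, and here I would pass to the maximal destabilising subsheaf $\sF \subsetneq T_X$. By the standard slope argument the $\sO_X$-linear bracket map $\wedge^{2}\sF \to T_X/\sF$ vanishes, so $\sF$ is a foliation with $\mu_H(\sF) > \mu_H(T_X) = \tfrac{r}{3} H^{3} > 0$. Because $X$ is rationally connected and $\det \sF$ is positive, I would invoke the algebraic integrability and rational connectedness of the leaves (Bogomolov--McQuillan, Campana--P\u{a}un) to obtain a family of rational leaves together with the associated quotient map $X \dashrightarrow B$.

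The analysis then splits on $\rk \sF \in \{1, 2\}$. For $\rk \sF = 1$ the foliation is by rational curves; comparing slopes gives $c_1(\sF) = aH$ with $a > r/3$, and I would show that the leaves are rational curves of small anticanonical degree sweeping out $X$. The decisive dichotomy is whether these curves all pass through a common point: if so, $X$ is a generalised cone over the leaf space $S$, and matching $(-K_X)^3 = 64$ with $\rho(X) = 1$ forces $(S, -K_S)$ to be the anticanonically embedded quadric surface, giving conclusion (b), while the degenerate configurations produce the variety of Example \ref{the-example}; if not, one gets a $\PP^1$-fibration structure incompatible with $\rho(X) = 1$ unless it again degenerates to one of the listed models. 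For $\rk \sF = 2$ the leaves are surfaces, necessarily members of a pencil contained in $|mH|$ (again because $\rho(X) = 1$), and a parallel analysis of $\det \sF$ and of the general leaf isolates the remaining family. Throughout, one must check that the contraction producing $X$ keeps the singularities canonical and that the resulting variety is smoothable to $\PP^3$, which is what ultimately eliminates all Gorenstein canonical degree-$64$ Fano threefolds lying outside the three families.

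The main obstacle I anticipate is precisely this last identification step: passing from the abstract existence of a positive-slope foliation to the concrete cone or pencil geometry, and then pinning down the base surface as a quadric while controlling the singularities of $X$ at the vertex. Establishing algebraic integrability and rational connectedness of the leaves in the singular (canonical) setting, and ruling out all competing surfaces and fibration structures using only $(-K_X)^3 = 64$, $\rho(X) = 1$ and smoothability, is where the real work lies; the numerical reductions of the first paragraph are routine by comparison.
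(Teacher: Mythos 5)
Your strategy (Harder--Narasimhan filtration of $T_X$ plus algebraic integrability of the maximal destabilising foliation) is genuinely different from the paper's, which instead analyses the rational maps $\varphi_{|H|}$ and $\varphi_{|2H|}$ attached to the degenerated hyperplane class $H$ with $-K_X \simeq 4H$ and $h^0(X,\sO_X(H)) \geq 4$. But as written the proposal has gaps that are not merely technical. First, your opening reduction is wrongly justified: canonical singularities do not imply that $K_X$ is Cartier, and no adjunction from the total space of the degeneration gives this. Gorensteinness is a real theorem here (Proposition \ref{proposition-dim3-gorenstein}): the paper proves it by passing to a $\Q$-factorial terminalisation and applying Reid's Riemann--Roch with the basket of singularities, combined with the semicontinuity bound $h^0(X,\sO_X(-K_X)) \geq 35$; equality in Reid's estimate is what forces all local Gorenstein indices to be $1$.

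Second, and more seriously, the heart of the classification is missing, and the one concrete dichotomy you do propose fails. In Example \ref{the-example} the degenerate lines are the fibres of a conic bundle $Y \rightarrow \mathbb{F}_4$ on a crepant modification, and since the contracted divisor $E_1$ is a \emph{section} of that conic bundle, every one of these curves passes through the single point $\tau(E_1) \in X$. So the criterion ``all leaves pass through a common point $\Rightarrow$ generalised cone $\Rightarrow$ quadric base'' does not separate case (b) from case (c): case (c) satisfies the hypothesis but is not a cone over a quadric, so your rank-one analysis would misclassify or lose it. More generally, by replacing the linear system $|H|$ by the abstract maximal destabilising subsheaf you discard exactly the input the paper runs on: there is no a priori identification of your foliation with the one induced by $\varphi_{|H|}$, hence no bridge from positivity of $\det \sF$ to the decomposition $H \simeq 3F+B$, to the pencil of degree-four del Pezzo surfaces with a $D_5$-point (Propositions \ref{proposition-classify-Xprime} and \ref{prop-identify-F-prime}), or to the conic-bundle and extension-class analysis of $|2H|$ over $\mathbb{F}_4$ that actually pins down Example \ref{the-example}. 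Without these steps --- which you yourself flag as ``where the real work lies'' --- the proposal proves only the semistable case (a); with just $\rho(X)=1$, $(-K_X)^3=64$, instability and smoothability, one would essentially be forced to redo Karzhemanov's classification of degree-$64$ canonical Gorenstein Fano threefolds rather than give an independent proof.
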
 

The variety in Example \ref{the-example} has quite remarkable properties: one has
$-K_X \simeq 4 H$ with $H$ an ample Weil $\Q$-Cartier divisor and
$$
H \simeq 3 F + B
$$
where $B$ is the fixed part of $|H|$ and $F$ is a pencil of del Pezzo surfaces of degree four
with a $D_5$-singularity. The divisors $F$ and $B$ are not $\Q$-Cartier, but $F$
defines a fibration on a small modification of $X$. The variety $X$ has the same Hilbert series and invariants as a projective space, e.g.
\begin{multline*}
h^0(X,T_X) = h^0(\PP^3, T_{\PP^3}),
\ h^0(X,T_X(-1)) = h^0(\PP^3, T_{\PP^3}(-1)), \\ 
h^0(X, (\bigwedge^2 T_X)^{**}) = h^0(\PP^3, \bigwedge^2 T_{\PP^3}), \
h^0(X, (\bigwedge^2 T_X(-1))^{**}) = h^0(\PP^3, \bigwedge^2 T_{\PP^3}(-1)).
\end{multline*}

After this paper was finished we learned that Ascher-de Vleming-Liu \cite[Thm.1.5]{ADL23}
used $K$-stability and the theory of K3 surfaces to give a classification of canonical Gorenstein degenerations of $\PP^3$, obtaining the same list as in Theorem \ref{theorem-dim3}. In particular, they prove that the variety in 
 Example \ref{the-example} is indeed a degeneration of $\PP^3$ 
Our technical approach is completely different, thereby providing an additional tool for the vast theory of moduli spaces of $K$-stable varieties.

As an immediate consequence of Theorem \ref{theorem-dim3} we obtain:
 
\begin{corollary} Let $X$ be a normal projective terminal degeneration of $\PP^3$. Then we have
$X \simeq \PP^3$.
\end{corollary}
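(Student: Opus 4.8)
The plan is to deduce the Corollary from Theorem \ref{theorem-dim3} by eliminating the two nontrivial cases. Since a terminal variety is in particular canonical, a normal projective terminal degeneration of $\PP^3$ is a fortiori a canonical degeneration, so Theorem \ref{theorem-dim3} applies and $X$ is of type (1), (2) or (3). As $X$ is terminal by hypothesis, it suffices to show that the varieties in (2) and (3) are \emph{not} terminal; then only (1) survives and $X \simeq \PP^3$.

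For case (2), write $p : Y = \PP(\sO_S \oplus \sO_S(-K_S)) \to S$ and let $\phi : Y \to X$ be the divisorial contraction of the section $E = \PP(\sO_S) \cong S$, which is the unique $\phi$-exceptional divisor and maps to the vertex $x_0 \in X$. The key point is a discrepancy computation. Since $E$ is contracted, its normal bundle $E|_E = N_{E/Y} = \sO_S(K_S)$ is negative (as $-K_S$ is ample on the quadric $S$). By adjunction on the smooth surface $E \cong S$ one has $K_E = K_S$, while $K_E = (K_Y + E)|_E$, and combining these with $E|_E = \sO_S(K_S)$ forces $K_Y|_E \equiv 0$. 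Writing $K_Y = \phi^* K_X + a E$ and restricting to $E$, the terms $\phi^* K_X|_E = 0$ (because $\phi(E)$ is a point) and $K_Y|_E = 0$ yield $a\,\sO_S(K_S) \equiv 0$, hence the discrepancy is $a = 0$. Thus $X$ admits a crepant divisor over $x_0$, so $x_0$ is canonical but not terminal, and case (2) is excluded.

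For case (3), I would use the geometry recorded in Example \ref{the-example}. Although $-K_X \simeq 4H$ is Cartier, so that $X$ is Gorenstein, the variety carries a pencil $F$ of quartic del Pezzo surfaces each having a $D_5$ singularity, and $F$ becomes a genuine fibration on a small modification $X'$ of $X$. For Gorenstein threefolds, terminal is equivalent to ``isolated compound du Val'', so it suffices to locate singularities of $X$ that are not of this shape. The $D_5$ points of the members of the pencil sweep out a curve along which $X'$ acquires a non-isolated compound-$D_5$ (hence non-isolated cDV) singularity, which is canonical but not terminal. Since $X$ and $X'$ are isomorphic in codimension one and the modification is crepant, they share the same discrepancies, so $X$ is not terminal either, and case (3) is excluded.

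The main obstacle is case (3): the computation in (2) is a one-line adjunction/discrepancy argument, whereas in (3) one must control the singularities of the explicit variety of Example \ref{the-example} precisely enough to produce a divisor of discrepancy zero --- equivalently, to verify that the locus of $D_5$-points is one-dimensional, so that the total space is a non-isolated cDV singularity rather than an isolated terminal point. Once the non-isolated (or crepant) nature of these singularities is established from the description in the Example, the Corollary follows immediately.
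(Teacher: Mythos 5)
Your overall strategy is exactly the paper's implicit one: terminal implies canonical, so Theorem \ref{theorem-dim3} applies, and the corollary follows once cases (2) and (3) are shown to be non-terminal --- this is precisely why the paper calls it an immediate consequence. Your case (2) computation is correct, and in fact reproduces what Example \ref{ex:basic} already records: there the paper computes $K_Y \sim_\Q \mu^* K_X + \frac{n+1-2d}{d}E$, which for $n=3$, $d=2$ gives discrepancy $0$, i.e.\ canonical but not terminal (one small caveat: $S$ may be the quadric \emph{cone}, so it need not be smooth; your adjunction argument still goes through because $S$ is Gorenstein with canonical singularities). For case (3), your route is logically sound --- terminal threefold singularities are isolated, and a small modification preserves discrepancies, so non-isolated singularities on $X'$ would indeed transfer non-terminality to $X$ --- but it is more roundabout than necessary, and it rests on input (the fibration on the small modification $X'$ whose general fibre is a quartic del Pezzo surface with a $D_5$ point) that is asserted in the introduction but would still have to be extracted from the construction; this is exactly the ``main obstacle'' you identify. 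That obstacle can be bypassed entirely: Example \ref{the-example} explicitly exhibits the morphism $\holom{\tau}{Y}{X}$ to the anticanonical model, states that $\tau$ is crepant, and that it contracts the prime divisor $E_1$. Hence $a(E_1,X)=0$, so $X$ carries a divisor of discrepancy zero over it and cannot be terminal --- the very discrepancy-zero divisor you say one must produce is handed to you by the Example in one line. (As an aside, case (2) can alternatively be excluded without any computation by Corollary \ref{corollary-nminusone-terminal}, which for a terminal degeneration with $\dim Z = 2$ forces $2 \leq d < 2$, a contradiction.)
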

 
\subsection{Outline of the paper}
Let 
$$
\holom{\pi}{\mathcal X}{\Delta}
$$
be a projective morphism with connected fibres onto a disc $\Delta$ (cf. \cite[Thm.5]{AR14} for degenerations over a polydisc) such that
$\mathcal X_t \simeq \PP^n$ for $t \neq 0$, and denote by $A \rightarrow \mathcal X$ a
relatively ample Cartier divisor. If we can choose $A$ such that $\sO_{\mathcal X_t}(A) \simeq
\sO_{\PP^n}(1)$ on a general fibre, a result of Araujo and Druel \cite[Prop.4.10]{AD14} 
implies that $\mathcal X \rightarrow \Delta$ is a projective bundle.
In our Setup \ref{setup-degen}, with $X$ being klt,  the degeneration of the hyperplane class merely defines a 
Weil $\Q$-Cartier divisor $H$ on the central fibre $X := \mathcal X_0$ such that
$$
-K_X \simeq (n+1) H, \qquad
h^0(X, \sO_X(H)) \geq n+1.
$$
 If $X \not\simeq \PP^n$, the base locus of the linear system $|H|$ is not empty, so we obtain a rational map
$$
\merom{\varphi_{|H|}}{X}{\PP^N}
$$
which is the basis of our investigation. We start by showing that if $\varphi_{|H|}$ is generically finite onto its image $Z \subset \PP^N$, then $X$ is isomorphic to the projective space. If $\varphi_{|H|}$ is not generically finite its fibres define a foliation $\sF \subset T_X$. For the proof of Theorem \ref{theorem-stability}  we show that this subsheaf destabilises the tangent sheaf.

In view of the numerous possibilities for klt degenerations even in dimension two,
it seems reasonable to restrict the classification in higher dimension 
to varieties with canonical singularities. As a first step we establish in Theorem \ref{theorem-case-nminusone} an upper bound on the degree of $Z$ if the general $\varphi_{|H|}$-fibre
is a curve. This bound is optimal and indicates that this case will always be realised by
the cone construction from Example \ref{example-degeneration-cone}.
With this preparation the remainder of the classification of canonical degenerations of $\PP^3$ reduces to the discussion of the apparently innocuous fibrations
$$
\merom{\varphi_{|H|}}{X}{\PP^1 \subset \PP^N}.
$$
If the fixed part of $|H|$ is empty, the anticanonical divisor is divisible by
$4 (h^0(X, \sO_X(H))-1) \geq 12$ in the class group, a very strong restriction on the geometry
of $X$, cp. Prokhorov's work \cite{Pro10} in the terminal case. Unfortunately, as shown by Example \ref{the-example}, we can not make this assumption if $X$ is not $\Q$-factorial. However we show in Proposition \ref{proposition-classify-Xprime} that there exists a $\Q$-factorialisation $X' \rightarrow X$
where the mobile part is basepoint-free and defines a fibration
$X' \rightarrow \PP^1$
such that the general fibre is a del Pezzo surface of degree four having a $D_5$-singularity.
We then consider the rational map $\varphi_{|2H|}$: it turns out that the graph of this map has a conic bundle structure over a surface of minimal degree in $\PP^9$, this finally leads
to the classification in Theorem \ref{theorem-dim3}.

{\bf Acknowledgements.} 
 A.H.\ thanks the Institut Universitaire de France for providing excellent working conditions for this project. A crucial part of Section \ref{section-dim3} was done during his stay at IBS-CCG, he would like to thank J.-M. Hwang and Y. Lee for their hospitality. T.P. recognizes support by the German Research Council (DFG;  project Pe305/14-1). 
We thank Yuchen Liu for bringing \cite{ADL23} to our attention.

\section{Notation and basic definitions}

We work over the complex numbers, for general definitions we refer to \cite{Har77, Kau83}. 
Complex spaces and varieties will always be supposed to be irreducible and reduced. 
We follow \cite{Laz04} for algebraic notions of positivity.
For positivity notions in the analytic setting, we refer to \cite{Dem12}.

We use the terminology of \cite{Deb01, KM98}  for birational geometry and notions from the minimal model program. We denote by $\simeq$ ($\sim_\Q$) the linear equivalence of Weil (resp. $\Q$-Weil) divisors
on a normal variety.

Given a normal variety $X$, a Zariski open subset $X^0 \subset X$ is big if the complement
$X \setminus X^0$ has codimension at least two.

We will frequently use the following:

\begin{notation} {\rm \label{notation-Qfactorial}
Let $X$ be normal projective variety with klt (resp. canonical, resp. terminal) singularities. Then by \cite[Cor.1.37]{Kol13} there exists a $\Q$-factorialisation
$$
\holom{\eta}{X'}{X},
$$
i.e., a {\it small} birational morphism such that  $X'$ is $\Q$-factorial with klt (resp. canonical, resp. terminal) singularities. Let $D$ be a Weil $\Q$-Cartier divisor on $X$. Then the pull-back
$\eta^* D$ is well-defined as a $\Q$-Cartier divisor class. Since $\eta$ is small, the divisor $\eta^* D$ coincides with the strict transform $D' \subset X'$ in codimension one,
so $\eta^* D=\bar D$ is a {\em Weil} $\Q$-Cartier divisor on $X'$. In particular there is
an associated reflexive sheaf $\sO_{X'}(\eta^* D)$ and $\eta$ being small we have
$$
H^0(X', \sO_{X'}(\eta^* D)) \simeq H^0(X, \sO_{X}(D)). 
$$
If 
$$
D \simeq M+B
$$
is the decomposition of the linear system $|D|$ in
its mobile and fixed part, then 
$$
\eta^* D \simeq D' = M' + B'
$$ 
is the decomposition into
mobile and fixed part of $|D'|$. 

Note that in general $M$ and $B$ are Weil, but not $\Q$-Cartier, so $\eta^* M$ and $\eta^* B$ are not well-defined.}
\end{notation}

\begin{remark} \label{remark-rather-big}
In the situation above, assume that $D$ is ample. Then 
the $\Q$-Cartier divisor $D'$ is nef and big and semiample, but not ample unless $X$ is $\Q$-factorial. However, $\eta$ being small, the restriction of $D'$ to any prime divisor $E \subset X'$ is big, i.e. thus we have
$$
D'^{\dim X-1} \cdot E>0.
$$
\end{remark}

\begin{definition}
Given a normal projective variety $X$ we denote by $N^d(X)$ the $\R$-vector space of cycles
of codimension $d$ modulo numerical equivalence. Given two elements $A, B \in N^d(X)$
we will write
$$
A \geq B
$$
if the cycle class $A-B$ is in the pseudoeffective cone $\mbox{\rm Pseff}^d(X) \subset N^d(X)$, i.e., $A-B$ 
is a limit of effective cycle classes.
\end{definition}

\begin{fact} \label{fact-subadjunction}
Let $X$ be a normal $\Q$-factorial variety. Let $E \subset X$ be a prime divisor, and let $\holom{\nu}{\tilde E}{E}$
be the normalisation. Then by \cite[Ch.3]{Sho92} there exists a canonically defined effective $\Q$-divisor $\Delta$ on $\tilde E$ such that the subadjunction formula
\begin{equation}
\label{eqn-subadjunction}
K_{\tilde E} + \Delta \sim_\Q \nu^* (K_X+E)
\end{equation}
holds. In particular we have
$$
- K_{\tilde E} \geq - \nu^* (K_X+E).
$$
\end{fact}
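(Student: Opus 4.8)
The stated inequality follows formally from the subadjunction formula once $\Delta$ is known to be effective: if $\Delta \geq 0$ and $K_{\tilde E} + \Delta \sim_\Q \nu^*(K_X+E)$, then $\nu^*(K_X+E) - K_{\tilde E} \sim_\Q \Delta$ is $\Q$-linearly equivalent to an effective divisor, hence pseudoeffective, which is precisely $-K_{\tilde E} \geq -\nu^*(K_X+E)$. So the real content is the construction of Shokurov's different $\Delta$ together with its effectivity. Since $X$ is $\Q$-factorial, both $K_X$ and $E$ are $\Q$-Cartier, so $K_X+E$ is $\Q$-Cartier and its pullback $\nu^*(K_X+E)$ is a well-defined $\Q$-Cartier divisor class on $\tilde E$. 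My plan is to set $\Delta := \nu^*(K_X+E) - K_{\tilde E}$, regarded as a $\Q$-Weil divisor class, and to produce a canonical effective representative by computing its coefficient along each prime divisor of $\tilde E$.

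Since a $\Q$-Weil divisor is determined by its coefficients at the codimension-one points of $\tilde E$, I would fix a prime divisor $P \subset \tilde E$ and localise $X$ at the generic point $z$ of its image, a codimension-two point. Cutting with $\dim X - 2$ general hyperplane sections --- which keep $X$ normal near $z$ and, being Cartier, are compatible with the formation of the different --- reduces the computation to a normal surface germ $S$ carrying the reduced curve $C = E|_S$, with $\tilde C \to C$ its normalisation, now a smooth curve. The coefficient of $\Delta$ along $P$ thus becomes the coefficient of the different at the point of $\tilde C$ lying over $z$.

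On the surface germ I would compute on a resolution $\pi : \tilde S \to S$. Writing $K_{\tilde S} = \pi^* K_S + \sum_i b_i E_i$ and $\pi^* C = \tilde C + \sum_i a_i E_i$ with $\tilde C$ the strict transform, ordinary adjunction on the smooth surface $\tilde S$ gives $K_{\tilde C} = (K_{\tilde S} + \tilde C)|_{\tilde C}$, and substituting the first two identities yields the coefficient
\[
\nu^*(K_S + C) - K_{\tilde C} = \sum_i (a_i - b_i)\,(E_i \cdot \tilde C).
\]
This expression is intrinsic, so the resulting $\Delta$ is canonical.

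The effectivity of $\Delta$ is the step I expect to absorb the most care, and it is exactly the input supplied by \cite[Ch.3]{Sho92}. In the surface reduction it is the inequality $\sum_i (a_i - b_i)(E_i \cdot \tilde C) \geq 0$. Here $a_i \geq 0$ because they are multiplicities of the pullback of the effective divisor $C$, and $E_i \cdot \tilde C \geq 0$ because distinct curves meet nonnegatively on the smooth surface $\tilde S$; the essential point is the nonpositivity $b_i \leq 0$ of the discrepancies, which one obtains by choosing $\pi$ to factor through the minimal resolution, whose exceptional divisor carries a negative-definite intersection form (equivalently $K_{\tilde S}$ is $\pi$-nef there). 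The one delicate piece of bookkeeping is the passage to an embedded resolution making $\tilde C$ smooth, which is controlled by the analytic classification of the different and does not disturb nonnegativity. Patching these local coefficients, the two $\Q$-Cartier classes $K_{\tilde E} + \Delta$ and $\nu^*(K_X+E)$ agree in codimension one, hence are $\Q$-linearly equivalent, which gives the subadjunction formula; the asserted inequality then follows as recorded at the outset.
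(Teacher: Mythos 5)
The paper itself contains no proof of this Fact: it is stated as a citation to Shokurov's theory of the different, \cite[Ch.3]{Sho92}, together with the one-line observation that effectivity of $\Delta$ yields the inequality $-K_{\tilde E} \geq -\nu^*(K_X+E)$. Your opening paragraph (inequality follows formally from the formula plus effectivity) and your ultimate deferral of effectivity to \cite{Sho92} therefore coincide with the paper's treatment; the localisation to codimension-two points, the cut to a normal surface germ $S$ with reduced curve $C$, and the coefficient formula $\sum_i (a_i-b_i)(E_i \cdot \tilde C)$ are also the standard construction. The problem lies in the effectivity sketch you add on top of this.

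Your term-by-term positivity rests on the claim $b_i \leq 0$, justified by letting $\pi$ factor through the minimal resolution. That sign holds only for divisors already appearing on the minimal resolution of $S$; the \emph{additional} blowups needed to make the strict transform $\tilde C$ smooth (which you cannot avoid, since you compute $K_{\tilde C}$ by adjunction on $\tilde S$) can have strictly positive discrepancy. Indeed, whenever $C$ is singular at a \emph{smooth} point of $S$ the minimal resolution of $S$ is the identity and every exceptional divisor of $\pi$ has $b_i \geq 1$. Concretely, take $S=\C^2$ and the cusp $C=\{y^2=x^3\}$: one blowup makes $\tilde C$ smooth, with $a_1=2$, $b_1=1>0$ and local intersection $(E_1 \cdot \tilde C)=2$, so the different coefficient is $(a_1-b_1)(E_1\cdot \tilde C)=2$ --- nonnegative, but for a reason your argument does not supply, since $b_1 \leq 0$ fails. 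The sentence asserting that the passage to an embedded resolution ``does not disturb nonnegativity'' is thus precisely the statement requiring proof, not bookkeeping. A correct repair uses transitivity of the different in two steps: first pass to the minimal resolution $S' \rightarrow S$ alone, where $b_i \leq 0$ and $a_i \geq 0$ make the boundary $C' + \sum_i (a_i-b_i)E_i$ effective; then use that for a reduced curve with an effective auxiliary boundary on a \emph{smooth} surface, the different on the normalisation equals the conductor divisor (of degree $2\delta_p$ over each singular point $p$) plus the pullback of the boundary, both effective. Alternatively, simply leave effectivity to \cite[Ch.3]{Sho92} or \cite[Sect.4.1]{Kol13}, which is exactly what the paper does.
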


\begin{remark} \label{non-Q-factorial}
Let $\bar X$ be a normal variety of dimension $n$ that is $\Q$-factorial in codimension two,
and let $N \subset \bar X$ be the subset of codimension at least three such that $\bar X \setminus N$ is $\Q$-factorial.
Let $\holom{\nu}{\tilde B}{X}$ be a finite morphism from a normal variety of dimension $n-1$,
and denote by $B \subset X$ its image. 
Then we have a well-defined pull-back
$$
\mbox{Cl}(\bar X \setminus N) \otimes \Q \simeq \pic(\bar X \setminus N) \otimes \Q \rightarrow \pic(\tilde B \setminus \fibre{\nu}{N}) \otimes \Q \simeq \mbox{Cl}(\tilde B \setminus \fibre{\nu}{N}) \otimes \Q.
$$
Since $\fibre{\nu}{N}$ has codimension at least two in $\tilde B$, we can extend to
$$
\nu^*: \mbox{Cl}(\bar X) \otimes \Q \rightarrow \mbox{Cl}(\tilde B) \otimes \Q.
$$
In particular the subadjunction formula \eqref{eqn-subadjunction} is still valid
for the Weil divisor $B \subset X$.
\end{remark}

The following is of course well-known in case $D$ is Cartier.

\begin{lemma} \label{lemma-indeterminacies}
Let $X$ be a normal projective variety and let $D$ be a Weil $\Q$-Cartier divisor on $X$.
Assume that $h^0(X, \sO_X(D)) \geq 2$ and the linear system $|D|$ has no fixed components.
Then there exists a birational morphism from a normal projective variety
$$
\holom{\mu}{Y}{X}
$$
such that 
$$
\mu^* D \sim_\Q M_Y + \sum_{i=1}^k a_i E_i
$$
where
\begin{itemize}
\item $\sum_{i=1}^k a_i E_i$ is an effective $\Q$-Weil divisor such that $\Supp(\sum_{i=1}^k a_i E_i)=\mbox{Exc}(\mu)$; and
\item $M_Y$ is a globally generated $\mu$-ample Cartier divisor such that $h^0(X, \sO_X(D))= h^0(Y, \sO_Y(M_Y))$ and
$$
\holom{\varphi_{|M_Y|}}{Y}{Z}
$$
is a resolution of the indeterminacies of $\varphi_{|D|}$. We call $Z$ the (closure of the) image of $\varphi_{|D|}$.
\end{itemize}
\end{lemma}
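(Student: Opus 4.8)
The plan is to realise $\mu$ as the normalisation of the blow-up of the base ideal of $|D|$; this is the classical elimination of indeterminacies, and the only new feature compared to the case where $D$ is Cartier (alluded to before the statement) is that $\sL := \sO_X(D)$ is merely reflexive of rank one. Since $X$ is normal, $\sL$ is invertible on a big open subset $X^0 \supseteq X_{\mathrm{reg}}$, and I would define the base ideal $\mathfrak a \subseteq \sO_X$ as the image of the evaluation map $H^0(X,\sL)\otimes \sO_X(-D) \to \sO_X$ induced by the natural pairing $\sO_X(D)\otimes\sO_X(-D)\to\sO_X$. On $X^0$ this is the usual base ideal, so its cosupport is $\Bs|D|$; the hypothesis that $|D|$ has no fixed component says precisely that $\Bs|D|$, and hence the cosupport of $\mathfrak a$, has codimension at least two.

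Next I would let $\holom{\mu}{Y}{X}$ be the normalisation of $\mathrm{Bl}_{\mathfrak a}X$ and set $g := \varphi_{|D|}\circ\mu$, which is now a morphism to $\PP^N$, where $N := h^0(X,\sO_X(D))-1 \ge 1$. Put $M_Y := g^*\sO_{\PP^N}(1)$; being the pull-back of $\sO_{\PP^N}(1)$ under a morphism, it is a globally generated Cartier divisor, and $\varphi_{|M_Y|} = g$ is by construction a resolution of the indeterminacies of $\varphi_{|D|}$, with image $Z = g(Y)$. Writing $\sO_Y(-F) = \mu^{-1}\mathfrak a \cdot \sO_Y$ for the effective Cartier divisor $F$ cut out by the inverse image ideal, one has $M_Y = \mu^*D - F$ away from the codimension $\ge 2$ locus where $\sL$ fails to be invertible; since both sides are $\Q$-Cartier and agree in codimension one, I obtain $\mu^* D \sim_\Q M_Y + F$ on all of $Y$, with $F$ effective. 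Moreover $\sO_Y(-F)$ is $\mu$-ample (it is relatively very ample for the blow-up, and normalisation is finite so relative ampleness is preserved), while $\mu^* D$ is $\mu$-trivial; hence $M_Y = \mu^* D - F$ is $\mu$-ample.

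It then remains to identify the support of $F$ with $\mathrm{Exc}(\mu)$. For the inclusion $\Supp F \subseteq \mathrm{Exc}(\mu)$ I would use that $\mu$ is an isomorphism over $X\setminus\Bs|D|$, because $\mathfrak a$ is the trivial ideal there, and that $\Bs|D|$ has codimension $\ge 2$; thus $F$, being supported over $\Bs|D|$, is $\mu$-exceptional. For the reverse inclusion I would exploit the $\mu$-ampleness of $M_Y$: given a prime $\mu$-exceptional divisor $E$, through a general point of $E$ there passes a curve $C\subseteq E$ contracted by $\mu$; then $\mu^*D\cdot C = 0$ while $M_Y\cdot C>0$, so $F\cdot C = -\,M_Y\cdot C<0$, which forces $C\subseteq\Supp F$. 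Letting $C$ vary in a covering family of $E$ gives $E\subseteq\Supp F$, and hence $\mathrm{Exc}(\mu)\subseteq\Supp F$. Writing $F=\sum_{i=1}^k a_iE_i$ over the exceptional primes then yields the asserted decomposition.

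Finally, the equality $h^0(X,\sO_X(D))=h^0(Y,\sO_Y(M_Y))$ follows from $\mu_*\sO_Y=\sO_X$, valid as $X$ is normal and $\mu$ is birational: the pulled-back sections give the inclusion $\supseteq$, while $0\le M_Y\le \lfloor\mu^*D\rfloor$ with $\mu$-exceptional difference gives an injection $H^0(Y,\sO_Y(M_Y))\hookrightarrow H^0(Y,\sO_Y(\lfloor\mu^*D\rfloor))=H^0(X,\sO_X(D))$. I expect the main obstacle to be essentially bookkeeping: making the definition of the base ideal and of $\mu^* D$ robust in the non-Cartier setting, where $\sO_X(D)$ is not locally free, and then pinning down the exact equality $\Supp F = \mathrm{Exc}(\mu)$ — for which the relative ampleness of $M_Y$ coming from blowing up the base ideal, rather than from an arbitrary resolution of indeterminacies, is the decisive ingredient.
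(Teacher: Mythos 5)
Your proposal takes a genuinely different construction from the paper (normalised blow-up of a base ideal, versus the paper's normalisation of the graph, i.e.\ of the dominating component of $\PP(\sS) \subset X \times \PP^N$ with $\sS$ the image of the evaluation map), but the step on which all of your conclusions rest is false. You claim $\mu^* D \sim_\Q M_Y + F$, where $\sO_Y(-F)=\mathfrak{a}\cdot\sO_Y$, by arguing that the two sides agree away from the locus where $\sO_X(D)$ fails to be invertible and hence ``agree in codimension one''. That locus $X\setminus X^0$ has codimension $\geq 2$ in $X$, but its preimage $\fibre{\mu}{X\setminus X^0}$ is in general a \emph{divisor} in $Y$, so the two classes need not agree in codimension one on $Y$. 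Concretely, let $X\subset\PP^3$ be the quadric cone with vertex $v$ and let $D$ be a ruling, so that $|D|$ is the pencil of rulings, with no fixed component and $\Bs|D|=\{v\}$. Here $\mathfrak{a}=\mathfrak{m}_v$, so $Y=\mathbb{F}_2$ is the blow-up of the vertex, $F=E$ is the reduced $(-2)$-curve, and $M_Y=f$ is a fibre of the ruling; but
$$
\mu^* D \;=\; f+\tfrac12 E \;=\; M_Y+\tfrac12 F ,
$$
whereas $M_Y+F=f+E$ is not $\Q$-linearly equivalent to $\mu^*D$, since their difference $\tfrac12E$ is numerically nontrivial ($E^2=-2$). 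So the coefficients $a_i$ of the lemma are in general \emph{not} those of the inverse-image ideal.

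This is not a cosmetic slip, because the effectivity of $\mu^*D-M_Y$, the $\mu$-ampleness of $M_Y$, the equality $\Supp(\sum a_iE_i)=\mbox{Exc}(\mu)$, and your injection $H^0(Y,\sO_Y(M_Y))\hookrightarrow H^0(X,\sO_X(D))$ all quote this identity. The paper repairs exactly this point differently: it \emph{defines} $\sum a_iE_i$ as the difference $\mu^*D-M_Y$, notes that it is supported on $\mbox{Exc}(\mu)$ (the classes agree over $X^0\setminus\Bs|D|$, and every prime divisor of $Y$ lying over the codimension-two set $(X\setminus X^0)\cup\Bs|D|$ is $\mu$-exceptional), and obtains $a_i\geq 0$ from the negativity lemma \cite[Lemma 3.37]{KM98}, using only that $M_Y$ is nef. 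A second, independent gap concerns the $\mu$-ampleness of $M_Y$ on \emph{your} $Y$: your derivation uses the false identity ($-F$ is indeed $\mu$-ample, but $M_Y\neq\mu^*D-F$). In the paper this is structural: $Y$ is the normalised graph, so $M_Y$ is the pull-back of a relatively ample sheaf under a finite morphism. Your $\mathfrak{a}$ equals the product $\sS\cdot\sO_X(-D)$, and making it invertible forces \emph{both} factors to become invertible, so your $Y$ dominates the normalised graph $\Gamma$ and a priori may do so strictly; any curve contracted by $Y\rightarrow\Gamma$ would be a $\mu$-exceptional curve with $M_Y\cdot C=0$, destroying both the ampleness and the support statement. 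Unless you prove the two models coincide, this part of the lemma is not established. (A minor further point: that $\varphi_{|D|}\circ\mu$ is a morphism is not the classical Cartier statement you invoke; it does hold, because $\mathfrak{a}\sO_Y$ is locally principal and generated by the products $s_iu$, so all ratios $s_i/s_j$ are locally regular, but this deserves a line of proof.)
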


\begin{proof} Set $r:= h^0(X, \sO_X(D))$ and
let $\sS \subset \sO_X(D)$ be the image of the evaluation morphism
$$
\sO_X^{\oplus r} \rightarrow \sO_X(D).
$$
Since $|D|$ has no fixed component, the torsion-free sheaf $\sS$ coincides with $\sO_X(D)$ in codimension one. Let $\PP(\sS) \subset \PP(\sO_X^{\oplus r})$
be the projectivisation, and let $\holom{\nu}{Y}{\PP(\sS)}$ be the normalisation
of the unique irreducible component of $\PP(\sS)$ that dominates $X$.
Set $M_Y:=\nu^* c_1(\sO_{\PP(\sS)}(1))$, then by construction $M_Y$ is Cartier and globally generated, in particular it is nef. Since $\sO_{\PP(\sS)}(1)$ is relatively ample and $\nu$ is finite,
the divisor $M_Y$ is $\mu$-ample

Consider now the birational morphism $\holom{\mu}{Y}{X}$, and let 
$E_1, \ldots, E_k$ be the $\mu$-exceptional divisors.
Since $\sO_{\PP(\sS)}(1)$
coincides with $\sO_X(D)$ in the locus where $\sO_X(D)$ is locally free and globally generated,
the Cartier divisor $M_Y$ coincides with the $\Q$-Cartier divisor $\mu^* D$ in the complement of the exceptional locus. Thus 
$$
M_Y-\mu^* D \sim_\Q - \sum_{i=1}^k a_i E_i
$$
has support in the exceptional locus. Since $M_Y$ is nef, the divisor $M_Y-\mu^* D$ is $\mu$-nef.
Thus we can apply the negativity lemma \cite[Lemma 3.37]{KM98} to see that 
$-(M_Y-\mu^* D)$ is effective, i.e. we have $a_i \geq 0$ for all $i=1, \ldots, k$. 

We claim that $\Supp(\sum_{i=1}^k a_i E_i)=\mbox{Exc}(\mu)$, hence in particular $a_i>0$ for all $i=1, \ldots, k$. Arguing by contradiction assume that the
inclusion $\Supp(\sum_{i=1}^k a_i E_i) \subset \mbox{Exc}(\mu)$ is strict, so there exists an irreducible component $Z$ of $\mbox{Exc}(\mu)$ that is not in $\Supp(\sum_{i=1}^k a_i E_i)$. In particular if $F$
is a general fibre of $Z \rightarrow \mu(Z)$, it is not contained in  $\Supp(\sum_{i=1}^k a_i E_i)$.
Thus the restriction 
$$
\sum_{i=1}^k a_i E_i|_F \sim_\Q -M_Y|_F
$$ 
is effective and antiample, a contradiction.

Finally let us see that $h^0(Y, \sO_Y(M_Y))=r$. By construction
$h^0(\PP(\sS), \sO_{\PP(\sS)}(1))=r$, moreover the pull-back
$$
H^0(X, \sO_X(D)) \simeq 
H^0(\PP(\sS), \sO_{\PP(\sS)}(1)) \rightarrow H^0(Y, \sO_Y(M_Y))
$$
is injective: if a section $s \in H^0(\PP(\sS), \sO_{\PP(\sS)}(1))$ vanishes on $Y$, it vanishes on a Zariski open subset of $X$. Yet $\sO_X(D)$ is torsion-free, so $s=0$.
Thus we have $h^0(Y, \sO_Y(M_Y)) \geq r$. Since the torsion-free sheaf $\mu_* \sO_Y(M_Y)$ injects into its bidual $\sO_X(D)$,
we have equality.
\end{proof}

\section{$\Delta$-genus for Weil divisors}

The $\Delta$-genus of a nef and big Cartier divisor was defined by Fujita \cite{Fuj89},
for our considerations we have to extend parts of his theory to Weil $\Q$-Cartier divisors.

\begin{definition} \label{definition-deltagenus}
Let $X$ be a normal projective variety of dimension $n$, and let $D$ be a Weil $\Q$-Cartier divisor that is nef and big. The $\Delta$-genus of $(X,D)$ is defined as
$$
\Delta(X,D) = n + D^n - h^0(X, \sO_X(D)) 
$$
\end{definition}

\begin{theorem} \label{thm:delta-0} Let $X$ be a normal projective klt variety of dimension $n$. Let $D$ be a Weil $\mathbb Q$-Cartier  divisor on $X$ that is nef and big. Assume that
$$ 
h^0(X,\sO_X(D)) \geq n + D^n.
$$ 
and the rational morphism given by $\vert D \vert$ is generically finite on its image.
Then $D$ is Cartier and base point free. 
\end{theorem}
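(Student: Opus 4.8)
The plan is to resolve the indeterminacies of $\varphi_{|D|}$, turn the hypothesis into the statement that the $\Delta$-genus vanishes together with a minimal-degree image, and then use the negativity lemma to show that the ``defect'' produced by the resolution is zero. Write $r := h^0(X,\sO_X(D))$; since $D$ is big we have $D^n>0$, so $r \geq n + D^n > n$ and $r \geq 2$. First I would resolve the indeterminacies of $\varphi_{|D|}$ as in (the proof of) Lemma~\ref{lemma-indeterminacies}, obtaining a birational morphism $\mu\colon Y \to X$ from a normal projective variety with a decomposition $\mu^* D = M_Y + F_Y$, where $M_Y$ is a globally generated Cartier divisor (the mobile part), $F_Y \geq 0$ is the fixed part, $\varphi_{|M_Y|}\colon Y \to Z \subset \PP^{r-1}$ is a morphism resolving $\varphi_{|D|}$, and $h^0(Y,M_Y)=r$. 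As $\varphi_{|D|}$ is generically finite onto its image, $\dim Z = n$ and $Z$ is irreducible and nondegenerate in $\PP^{r-1}$.

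The next step is a degree computation. Since $F_Y \geq 0$ and both $M_Y$ and $\mu^* D$ are nef, one gets $M_Y^n \leq (\mu^* D)^n = D^n$; since $M_Y = \varphi_{|M_Y|}^*\sO_Z(1)$ we have $M_Y^n = \deg(\varphi_{|M_Y|})\cdot \deg Z \geq \deg Z$; and the classical bound for nondegenerate varieties gives $\deg Z \geq \codim_{\PP^{r-1}} Z + 1 = r-n$. Together with the hypothesis $D^n \leq r-n$ this yields the chain
$$
r-n \;\leq\; \deg Z \;\leq\; M_Y^n \;\leq\; D^n \;\leq\; r-n ,
$$
forcing equality throughout: $\Delta(X,D)=0$, the map $\varphi_{|M_Y|}$ is birational, $Z$ is a variety of minimal degree, and crucially $M_Y^n = (\mu^* D)^n$.

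The hard part will be upgrading this last numerical equality to the geometric statement $F_Y = 0$; this is where the normality of $X$ and the negativity lemma really enter. From $M_Y^n=(\mu^* D)^n$ I would observe that the numbers $s_i := (\mu^* D)^i\cdot M_Y^{n-i}$ satisfy $s_{i+1}-s_i = (\mu^* D)^i M_Y^{n-1-i}\cdot F_Y \geq 0$ (a product of nef classes against an effective divisor) while $s_0 = s_n$, so all $s_i$ coincide and in particular $M_Y^{n-1}\cdot F_Y = 0$. Writing $F_Y = \sum_j c_j W_j$ with $c_j>0$, the projection formula $M_Y^{n-1}\cdot W_j = \sO_Z(1)^{n-1}\cdot(\varphi_{|M_Y|})_* W_j$ together with ampleness of $\sO_Z(1)$ shows each $W_j$ is $\varphi_{|M_Y|}$-exceptional. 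Moreover, for any curve $C$ contracted by $\varphi_{|M_Y|}$ we have $M_Y\cdot C=0$, hence $0\leq \mu^*D\cdot C = F_Y\cdot C$, so $F_Y$ is $\varphi_{|M_Y|}$-nef. Applying the negativity lemma \cite[Lem.~3.39]{KM98} to $\varphi_{|M_Y|}\colon Y\to Z$ (replacing $Z$ by its normalization if necessary) to the divisor $-F_Y$, whose pushforward vanishes, gives $-F_Y\geq 0$, and therefore $F_Y=0$.

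Finally I would descend back to $X$. With $F_Y=0$ the divisor $\mu^* D = M_Y$ is base-point-free on $Y$, and for each $x\in X$ every curve $C\subset \mu^{-1}(x)$ satisfies $M_Y\cdot C=\mu^*D\cdot C=0$; since $\mu^{-1}(x)$ is connected it is contracted by $\varphi_{|M_Y|}$ to a single point $z_x\in Z$. Pulling back a hyperplane of $\PP^{r-1}$ avoiding $z_x$ and pushing forward by $\mu$ then produces a member of $|D|$ whose support misses $x$, so $\Bs|D|=\emptyset$. A member of $|D|$ not passing through $x$ is locally trivial there and linearly equivalent to $D$, whence $D$ is Cartier at $x$; as $x$ is arbitrary, $D$ is Cartier and base-point-free, as required.
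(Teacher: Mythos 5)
Your proof is correct, but it takes a genuinely different route from the paper's, and in one respect a stronger one. The paper argues in two separate stages: it first rules out a fixed component of $|D|$ by passing to a $\Q$-factorialisation (this is exactly where the klt hypothesis enters, via \cite[Cor.1.37]{Kol13}), applying Lemma \ref{lemma-indeterminacies} to the mobile part and playing Fujita's bound $\Delta(Y,M_Y) \geq 0$ against the Hodge-index inequality \cite[Cor.2.5.3]{BS95} and the strict positivity $D'^{n-1} \cdot B' > 0$ of Remark \ref{remark-rather-big}; only then does it apply Lemma \ref{lemma-indeterminacies} to $D$ itself, and it kills the exceptional part by a stratification argument on $\dim \mu(\supp E)$ that relies on the $\mu$-ampleness of $M_Y$, concluding that $\mu$ is an isomorphism. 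You instead absorb the fixed part and the exceptional divisors into a single effective divisor $F_Y$, replace Fujita's theorem by the elementary bound $\deg Z \geq \codim_{\PP^{r-1}} Z + 1$ for irreducible nondegenerate varieties (which in this globally generated situation yields the same inequality $M_Y^n \geq r-n$), and --- this is the key pivot --- you extract from the equality chain the birationality of $\varphi_{|M_Y|}$, which is precisely what makes the negativity lemma applicable over $Z$ rather than over $X$: since every component of $F_Y$ is $\varphi_{|M_Y|}$-exceptional and $F_Y$ is $\varphi_{|M_Y|}$-nef, the lemma kills $F_Y$ in one stroke. Your route avoids the $\Q$-factorialisation, the Hodge-index theorem and the $\mu$-ampleness of $M_Y$ altogether, and in fact never uses the klt hypothesis: as written, it proves the statement for an arbitrary normal projective $X$. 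What the paper's route buys is the immediate structural conclusion that $\mu$ is an isomorphism (via $\supp(\sum a_i E_i) = \mbox{Exc}(\mu)$ in Lemma \ref{lemma-indeterminacies}), whereas you must finish with the pointwise argument producing a member of $|D|$ missing a given $x$ --- which is fine, and correct as you wrote it.

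One point you should make explicit: Lemma \ref{lemma-indeterminacies} as stated assumes $|D|$ has no fixed component, so you cannot quote it; you must rerun its proof for $D$. The only conclusion that needs a new argument is $F_Y \geq 0$: here $M_Y - \mu^* D$ is still $\mu$-nef, but now $\mu_*(\mu^* D - M_Y) = B$ is the (effective) fixed part rather than zero, so you need the negativity lemma \cite[Lemma 3.39]{KM98} in its full form ``$F_Y$ is effective if and only if $\mu_* F_Y$ is effective'', which indeed gives $F_Y \geq 0$. Note also that once $F_Y = 0$, pushing forward $\mu^* D \sim_\Q M_Y$ forces $B \sim_\Q 0$, hence $B = 0$; this is what legitimizes your last step, where members of $|M_Y|$ are pushed forward to members of $|D|$. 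With these justifications inserted, your argument is complete.
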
 

\begin{proof} 
{\it Step 1. $\vert D \vert $ has no fixed component. }

Arguing by contradiction we assume that the linear system $|D|$ has a fixed component, i.e. we have a decomposition
$$
D \simeq M + B
$$
into mobile and fixed part with $B \neq 0$. Following Notation \ref{notation-Qfactorial},
we consider a $\Q$-factorialisation $\holom{\eta}{X'}{X}$ and denote by
$$
D' \simeq M' + B'
$$
the induced decomposition into fixed and mobile part of $D' = \mu^* D$.
We apply Lemma \ref{lemma-indeterminacies} to the Weil $\Q$-Cartier divisor
$M$, so we get a birational morphism $\holom{\mu}{Y}{X'}$
such that
$$
\mu^* M' \sim_\Q M_Y + \sum_{i=1}^k a_i E_i
$$
where $a_i>0$ and the $E_i$ are $\mu$-exceptional.

By assumption the map $\varphi_{|D|}$ is generically finite onto its image, 
hence the  map $\varphi_{|D'|}=\varphi_{|M'|}$ is generically finite onto its image.
Since $\varphi_{|M_Y|}$ is a resolution of indeterminacies of $\varphi_{|M'|}$,
the divisor $M_Y$ is nef and big.
Therefore by \cite[Thm.1.1]{Fuj89} we have  
$$
\Delta(Y, M_Y) =  n+ M_Y^n - h^0(Y,\sO_Y(M_Y)) \geq 0.
$$ 
Hence
\begin{equation} \label{eq:eq} 
M_Y^n \geq  h^0(Y,\sO_Y(M_Y)) - n = h^0(X,\sO_X(D)) - n \geq D^n. 
\end{equation}

Set $D'^n=D^n =: a.$ 
We will now use the Hodge index theorem applied to the nef divisors 
$\mu^* D'$ and $M_Y$:  by \cite[Cor.2.5.3]{BS95} we have for all $0 \leq k \leq n$ that
$$
\left( (\mu^* D')^k M_Y^{n-k} \right)^n \geq \left((\mu^* D')^n \right)^k \cdot (M_Y^n)^{n-k}
= a^k  (M_Y^n)^{n-k}.
$$
In particula by \eqref{eq:eq} we obtain 
$$
((\mu^* D')^{n-1} M_Y)^n \geq  a^{n-1}  (M_Y^n) \geq a^n.
$$
Now observe that we can represent some positive multiple of $(\mu^* D')^{n-1}$ by a curve $C$ that is disjoint from the image of the exceptional locus of $\mu$. In a neighbourhood of this curve the divisor $M_Y$ coincides
with $\mu^* M'$. Thus we have
$$
((\mu^* D')^{n-1} \cdot M_Y)^n
=
(\mu^* D')^{n-1} \cdot (\mu^* M')
< (\mu^* D')^{n} = a
$$
where the strict inequality follows from  $D'^{n-1} \cdot B>0$ (cp. Remark \ref{remark-rather-big}) and the projection formula. Thus we have reached a contradiction.

{\it Step 2. The conclusion.} 
We start with similar considerations as in Step 1. 
By Lemma \ref{lemma-indeterminacies} we can find a modification $\holom{\mu}{Y}{X}$
such that $\mu^* D \sim_\Q M_Y + E$ where $M_Y$ is a globally generated Cartier divisor
that defines a resolution of indeterminacies of $\varphi_{|D|}$, and $E$ is an effective $\mu$-exceptional $\Q$-Cartier divisor. Since we assume that $\varphi_{|D|}$ is generically finite onto its image,
the divisor $M_Y$ is big. As in Step 1 we know by Fujita's result \cite[Thm.1.1]{Fuj89} that
$M_Y^n \geq D^n$. Since both $M_Y$ and $D$ are nef, we have
$$
\mu^* D^{n-k} \cdot M_Y^{k-1} \cdot E \geq 0 
$$
and thus  
$$
\mu^* D^{n-k} \cdot M_Y^{k} \geq \mu^* D^{n-k-1} \cdot M_Y^{k+1}.
$$
Thus we also have $D^n \geq M_Y^n$ and all the preceding inequalities are equalities.

We will now show that $E=0$. Arguing by contradiction we assume $E \neq 0$, and 
denote by $l$ the largest dimension of an irreducible component of $\mu(\supp E)$. 
Fix an irreducible component $E_i \subset \supp E$ such that $\mu(E_i)$ has dimension $l$.
Since 
$\mu^* D^{l} \cdot M_Y^{n-l-1} \cdot E=0$ we have $\mu^* D^{l} \cdot M_Y^{n-l-1} \cdot E_i=0$.
Moreover $(\mu^* D)^{l} \cdot E_i$ is represented (up to positive multiples) by a general fibre of $E_i \rightarrow \mu(E_i)$. Since $M_Y$ is $\mu$-ample by Lemma \ref{lemma-indeterminacies}
we get $(\mu^* D)^{l} \cdot E_i \cdot M_Y^{n-l-1} > 0$, a contradiction.

This shows that $E = 0$, so again by Lemma \ref{lemma-indeterminacies} $\mu$ is an isomorphism. Hence $D \simeq M_Y$ is Cartier and base point free by \cite[Thm.4.2]{Fuj90}. 
\end{proof}

\begin{remark}
In the situation of Theorem \ref{thm:delta-0} much more can be said: since $D$ is ample and {\em Cartier}, we can apply \cite[Thm.4.2]{Fuj90} to see that the polarised variety $(X,D)$ has $\Delta$-genus zero, i.e. we have
$$
h^0(X,\sO_X(D)) = n + D^n.
$$
These varieties are classified by \cite[Thm.5.10,Thm.5.15]{Fuj90}, in particular we have: 
\end{remark} 

\begin{corollary}  \label{cor:quadric}  
In the situation of Theorem \ref{thm:delta-0}, the following holds:
\begin{itemize}
\item If $D^n=1$, then $X \simeq \PP^n$.
\item If $D^n = 2$, then $X$ is isomorphic to a quadric.
\end{itemize} 
\end{corollary}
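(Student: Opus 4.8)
The plan is to feed the output of Theorem \ref{thm:delta-0} into Fujita's structure theory, which the preceding Remark has already set up. By Theorem \ref{thm:delta-0} the divisor $D$ is Cartier and base point free, and by the Remark $(X,D)$ has $\Delta$-genus zero, i.e.
$$
h^0(X,\sO_X(D)) = n + D^n.
$$
Thus $|D|$ defines an honest morphism $\holom{\varphi_{|D|}}{X}{\PP^N}$ with $N = n + D^n - 1$, and by hypothesis this morphism is generically finite. Since $D$ is ample, $\varphi_{|D|}$ contracts no curve, so it is in fact a finite morphism onto its image $Z \subset \PP^N$.

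First I would record the elementary numerology of the image. Because the $n + D^n$ sections of $\sO_X(D)$ are linearly independent, $Z$ is nondegenerate in $\PP^N$; hence $\dim Z = n$ and, by the classical minimal-degree bound, $\deg Z \geq \codim Z + 1 = D^n$. On the other hand the projection formula gives $D^n = (\deg \varphi_{|D|}) \cdot \deg Z$, so
$$
D^n = (\deg \varphi_{|D|}) \cdot \deg Z \geq \deg Z \geq D^n,
$$
forcing $\deg \varphi_{|D|} = 1$ and $\deg Z = D^n$. Hence $\varphi_{|D|}$ is a finite birational morphism onto a variety $Z$ of minimal degree.

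For $D^n = 1$ this gives $N = n$ and $\deg Z = 1$, so $Z = \PP^n$; a finite birational morphism onto the smooth (hence normal) target $\PP^n$ is an isomorphism by Zariski's main theorem, and $X \simeq \PP^n$. For $D^n = 2$ we get $N = n+1$ and $\deg Z = 2$, so $Z$ is an irreducible quadric hypersurface in $\PP^{n+1}$. The one point that needs a small argument is the normality of $Z$: an irreducible quadric has rank at least three, so its singular locus is a linear subspace of codimension at least two, and Serre's criterion then shows $Z$ is normal. As before, the finite birational morphism $\varphi_{|D|} \colon X \to Z$ is therefore an isomorphism, and $X$ is a quadric.

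I expect the only genuine subtlety to be this last normality check for the quadric (equivalently, ruling out that $Z$ is a non-normal degeneration); everything else is degree bookkeeping resting on the already-established facts that $D$ is ample, Cartier, base point free, and of $\Delta$-genus zero. Alternatively one can bypass the hands-on argument entirely and simply quote Fujita's classification of $\Delta$-genus-zero polarised varieties \cite[Thm.5.10, Thm.5.15]{Fuj90}, which identifies the degree-one case with $(\PP^n, \sO_{\PP^n}(1))$ and the degree-two case with a quadric directly.
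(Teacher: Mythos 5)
Your proof is correct, and it takes a genuinely different route from the paper's. The paper gives no free-standing argument for this corollary: the preceding Remark observes that, $D$ being ample and Cartier, \cite[Thm.4.2]{Fuj90} forces $\Delta(X,D)=0$, and the corollary is then read off directly from Fujita's classification of $\Delta$-genus-zero polarised varieties \cite[Thm.5.10,Thm.5.15]{Fuj90} --- that is, exactly the alternative you sketch in your closing sentence. Your hands-on argument (nondegeneracy of the image, the bound $\deg Z \geq \codim Z + 1$ for irreducible nondegenerate varieties, the projection formula forcing $\deg \varphi_{|D|} = 1$ and $\deg Z = D^n$, normality of irreducible quadrics via Serre's criterion, and Zariski's main theorem) in effect reproves the two lowest-degree cases of that classification. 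What it buys is a self-contained proof that makes explicit where normality of the target is needed; what it costs is redoing work that Fujita's structure theory packages once and for all.

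One subtlety, which you share with the paper rather than create: your finiteness step uses that $D$ is ample, whereas Theorem \ref{thm:delta-0} only assumes $D$ nef and big, and ampleness cannot be deduced from its conclusion. With $D$ merely nef and big the corollary as literally stated is even false: take $X$ the blow-up of $\PP^n$ in a point and $D$ the pull-back of a hyperplane; then all hypotheses of Theorem \ref{thm:delta-0} are satisfied and $D^n=1$, yet $X \not\simeq \PP^n$ --- here $\varphi_{|D|}$ is birational but not finite, which is precisely where ampleness enters your argument. The paper's Remark makes the same silent assumption (``since $D$ is ample and Cartier'', needed equally for Fujita's classification of \emph{polarised} varieties), and in the only application, Corollary \ref{cor:gen-finite}, the divisor is indeed ample; so your reading agrees with the intended one, but both your proof and the paper's statement would benefit from listing ampleness of $D$ as an explicit hypothesis.
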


\section{Deformations of $\mathbb P^n$ - the general setup} 
\label{section-setup}

\begin{definition} \label{def1} Let $X$ be a normal projective variety with klt (resp. canonical, resp. terminal) singularities. Then $X$ is said to be a klt  degeneration of $\mathbb P^n$,
if there exists a normal complex space $\mathcal X$ and a projective morphism 
$$ \pi: \mathcal X \to \Delta $$
to a disc $\Delta \subset \mathbb C$, such that 
$$ 
X_t = \pi^{-1}(t) \simeq \mathbb P^n
$$
for $t \ne 0$ and $X_0 \simeq X$.
\end{definition} 

\begin{remark} \label{remark-def1}
Since $\rho(\PP^n)=1$ the total space $\mathcal X$ is $\Q$-Gorenstein \cite[Thm.3.4]{HN13}.
Moreover the central fibre $X$ being klt, the total space $\mathcal X$ is also klt \cite[Thm.7.5]{Kol97}. Note that we do not assume that $X$ is $\Q$-factorial, since this condition is not easy to check in practice. 
\end{remark}

The basic examples of degenerations of $\PP^n$ are obtained by a cone construction which
we learned from \cite[Example 1]{AR14}: 

\begin{example} \label{ex:basic} 
Let $\holom{v_d}{\PP^n}{\PP^N}$ the $d$-th Veronese embedding of $\PP^n$,
and let $W \subset \PP^{N+1}$ be the cone over $v_d(\PP^n)$. Let
$\mathcal X \rightarrow W$ be the blowup of the base locus of a general pencil of hyperplane sections, then we have a flat fibration 
$$
\pi: \mathcal X \rightarrow \PP^1
$$
such that the general fibre is isomorphic to $\PP^n$, and the special fibre $X=\mathcal X_0$ is a cone over a hyperplane section $Z$ of $v_d(\PP^n)$. Note that this means that
$Z \in |\sO_{\PP^n}(d)|$.
As an abstract variety we obtain $X$ by considering the projectivised bundle
$$
q: \PP(\sO_Z \oplus \sO_Z(-d)) =:Y \to Z
$$
and contracting the negative section $E := \PP(\sO_Z(-d))$ onto a point.
Note that
$$
-K_Y  \simeq q^* c_1(\sO_Z(n+1)) + 2 E,
$$
and 
$$
- K_X =  \mu_* (-K_{Y}) = (n+1) \mu_* q^* c_1(\sO_Z(1)),
$$
hence $K_X$ has Weil index $n+1$. Since $\sO_{E}(E) \simeq \sO_Z(-d)$
we compute easily that
$$
K_{Y}  \sim_\Q \mu^* K_X + \frac{n+1-2d}{d} E,
$$
so $X$ has klt (resp. canonical, resp. terminal) singularities if and only if 
$d<n+1$ (resp. $d \leq \frac{n+1}{2}$, resp. $d < \frac{n+1}{2}$).
A direct computation shows that  $\mu_*(T_{Y/Z}) \subset T_X$ destabilizes $T_X$ for $d \geq 2$, hence $T_X $ is never semistable. Theorem \ref{theorem-stability} shows that this is no coincidence.
\end{example}

\begin{proposition} \label{proposition-picard-number} Let $X$ be a klt degeneration of $\mathbb P^n$ (or more generally of a Fano manifold with Picard number one). Then $X$ is $\mathbb Q$-Fano with $\rho(X) = 1$.
\end{proposition}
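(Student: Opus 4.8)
The plan is to compare the fibres of $\pi$ through the relative Picard group, the engine being a seesaw-type statement forcing a divisor that is numerically trivial on the general fibre to remain numerically trivial on the central fibre. I would first record the standing facts. By Remark \ref{remark-def1} the total space $\mathcal X$ is $\Q$-Gorenstein and klt, hence Cohen--Macaulay with rational singularities; since all fibres over the regular curve $\Delta$ are $n$-dimensional, $\pi$ is flat. I take the central fibre $X=X_0$ to be reduced (it is normal by hypothesis), so that $X_0=\pi^*(0)$ is a Cartier divisor with $\sO_{\mathcal X}(X_0)|_{X_0}\simeq\sO_{X_0}$, and adjunction gives $-K_{X_0}\sim_\Q -K_{\mathcal X}|_{X_0}$. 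Fix a $\pi$-ample Cartier divisor $A$; its restriction to every fibre is ample, and on a general fibre $X_t\simeq\PP^n$ we have $A|_{X_t}\simeq m\,\sO_{\PP^n}(1)$ with $m$ independent of $t$ because $\Delta\setminus\{0\}$ is connected.

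The Key Claim is that a $\Q$-Cartier divisor $F$ on $\mathcal X$ with $F|_{X_t}\equiv 0$ for general $t$ satisfies $F|_{X_0}\equiv 0$; in fact $\sO_{\mathcal X}(kF)|_{X_0}\simeq\sO_{X_0}$ for a suitable $k$. I would prove this by passing to $L:=\sO_{\mathcal X}(kF)$, noting that $L|_{X_t}\simeq\sO_{X_t}$ since numerical and linear equivalence agree on $\PP^n$, and reading off the induced section of the relative Picard scheme $\mathrm{Pic}_{\mathcal X/\Delta}$: it vanishes over the dense open $\Delta\setminus\{0\}$, hence identically once $\mathrm{Pic}_{\mathcal X/\Delta}$ is separated, so it vanishes at $0$ as well. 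I expect this propagation from the general to the special fibre to be the main obstacle: separatedness of $\mathrm{Pic}_{\mathcal X/\Delta}$ and cohomological flatness in degree zero must be justified for the singular total space, and this is precisely where flatness, reducedness of the central fibre and $\pi_*\sO_{\mathcal X}=\sO_\Delta$ enter.

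Granting the Claim, the $\Q$-Fano property is immediate. Since $-K_{X_t}\simeq(n+1)\sO_{\PP^n}(1)=\tfrac{n+1}{m}A|_{X_t}$, the $\Q$-Cartier divisor $F:=m(-K_{\mathcal X})-(n+1)A$ is numerically trivial on general fibres, so by the Claim $F|_{X_0}\equiv 0$, that is $m(-K_{X_0})\equiv(n+1)\,A|_{X_0}$. As $A|_{X_0}$ is ample and ampleness is a numerical property, $-K_{X_0}$ is ample; together with the klt hypothesis this makes $X$ a $\Q$-Fano variety.

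For the Picard number I would combine the Claim with topology. Applying the Claim to an arbitrary $\Q$-Cartier divisor $B$, and using $\rho(\PP^n)=1$ to write $B|_{X_t}\equiv c\,A|_{X_t}$, shows that the relative $N^1(\mathcal X/\Delta)$ is spanned by $A$; since $\Delta$ is a disc with $\pic(\Delta)=0$ this yields $\rho(\mathcal X)=1$. On the other hand $\mathcal X$ deformation retracts onto $X_0$, so $H^2(\mathcal X,\Z)\simeq H^2(X_0,\Z)$; and for the klt Fano variety $X_0$ Kawamata--Viehweg vanishing gives $H^1(X_0,\sO_{X_0})=H^2(X_0,\sO_{X_0})=0$, with the analogous vanishing for $\mathcal X$ via $R^i\pi_*\sO_{\mathcal X}=0$, so $c_1$ identifies $\pic$ with $H^2(\cdot,\Z)$ modulo torsion on both. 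Hence restriction $\pic(\mathcal X)\to\pic(X_0)$ is surjective modulo torsion, forcing $\rho(X_0)=\rho(\mathcal X)=1$ (and $\rho(X_0)\geq 1$ because $X$ is projective). This shows $X$ is $\Q$-Fano with $\rho(X)=1$.
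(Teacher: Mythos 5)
Your proposal is correct in its architecture and reaches both conclusions by a genuinely different route. The paper's engine is elementary semicontinuity: for a line bundle on $\mathcal X$ that is trivial on the general fibres (there $\sO_{\mathcal X}(pA+qK_{\mathcal X})$, and later $\sL\otimes\omega_{\mathcal X}^{r}$), flatness gives a nonzero section on $X_0$ by \cite[Thm.III.4.12]{BS76}, and the vanishing of the intersection number $X_0\cdot c_1(\cdot)\cdot c_1(A)^{n-1}$ forces that section to be nowhere zero, so the restriction to $X_0$ is trivial. This is exactly your Key Claim, proved without any Picard-functor machinery; indeed the semicontinuity-plus-intersection argument is essentially the elementary proof of the valuative criterion of separatedness for $\mathrm{Pic}$ that you want to invoke. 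For the Picard number the paper argues by contradiction: assuming $\rho(X_0)\geq 2$ it takes an extremal contraction $\varphi_0\colon X_0\to Y_0$, extends $\sL_0=\varphi_0^*H_0$ to $\mathcal X$ using the same deformation retract and $R^j\pi_*\sO_{\mathcal X}=0$ that you use, and the propagation principle then shows $\sL_0\simeq\omega_{X_0}^{-r}$, which is ample (or anti-ample, or trivial), contradicting that $\sL_0$ is nef, nontrivial and degenerate. Your direct computation --- every line bundle on $X_0$ extends to $\mathcal X$, and every line bundle on $\mathcal X$ is numerically proportional to $A$ on the central fibre --- is cleaner in that it avoids invoking the cone theorem on $X_0$; both versions rest on the same two pillars, topological extension plus propagation from the general to the special fibre.

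The one weak point is the engine itself. You correctly flag separatedness and cohomological flatness of $\mathrm{Pic}_{\mathcal X/\Delta}$ as the main obstacle, but leave them unjustified, and this is not a pure formality: the family lives over a disc, so Grothendieck's FGA theorem (projective flat morphism with integral geometric fibres implies $\mathrm{Pic}$ is a separated scheme) cannot be quoted verbatim; one needs the analytic counterpart for flat proper, cohomologically flat morphisms of complex spaces, which is true but far heavier than the statement deserves. The inputs you would need are available --- the general fibres are $\PP^n$, the central fibre is normal, hence all fibres are integral, and $h^0(X_t,\sO_{X_t})=1$ gives cohomological flatness in degree zero --- so the gap is fillable by citation. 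Still, the most economical repair is to replace the separatedness appeal by the direct argument: semicontinuity for the flat family gives nonzero sections of $L|_{X_0}$ and of $L^{-1}|_{X_0}$ (or a single section together with the intersection-number computation, as in the paper), and integrality of $X_0$ then forces $L|_{X_0}\simeq\sO_{X_0}$. Be aware that the naive alternative --- trivialize $L$ on $\mathcal X\setminus X_0$ and extend the trivializing section across $X_0$ --- genuinely fails in the analytic category, since nowhere-vanishing holomorphic sections on the punctured family need not extend meromorphically; this is precisely why the semicontinuity theorem, and not a soft extension argument, is the right tool here.
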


\begin{remark*}
The proof of the proposition also shows that the anticanonical divisor $-K_{\mathcal X}$ is relatively ample and $\rho(\mathcal X/\Delta)=1$.
\end{remark*}

\begin{proof} 
Since $\pi$ is a projective morphism, there exists a relatively ample Cartier divisor $A \rightarrow \mathcal X$. 
Since $\rho(X_t)=1$ for $t \ne 0$, we can find positive integers $p,q \in \N$ such that for a general fibre
$$
\sO_{X_t}(p A + q K_{X_t}) \simeq \sO_{X_t}
$$
Since $K_{\mathcal X}$ is $\Q$-Cartier we can assume, up to taking multiples,
that $\sO_{\mathcal X}(p A + q K_{\mathcal X})$ is locally free. 
Since $\pi$ is flat and $\sO_{\mathcal X}(p A + q K_{\mathcal X})$ is locally free we know by 
semicontinuity \cite[Thm.III.4.12]{BS76}
that
$h^0(X_0, \sO_{X_0}(p A + q K_{X_0})) \neq 0$. Yet the intersection
number $X_0 \cdot c_1(pA+q K_{\mathcal X}) \cdot c_1(A)^{\dim X-1}$
is zero, so the zero locus of any section $s  \in H^0(X_0, \sO_{X_0}(p A + q K_{X_0}))$ is empty. This finally shows that  $\sO_{X_0}(p A + q K_{X_0}) \simeq \sO_{X_0}$ and $X_0$ is Fano.

After shrinking $\Delta$, we may assume that $X_0$ is a deformation retract of $\mathcal X$
\cite[I,Thm.8.8]{BHPV04}.  Hence the restriction map
$$ H^q(\mathcal X, \mathbb Z) \to  H^q(X_0, \mathbb Z) $$
is an isomorphism for all $q \in \N$. Since $\mathcal X$ is klt (cf. Remark \ref{remark-def1})
and $-K_{\mathcal X}$ is relatively ample we know by relative Kodaira vanishing that
$R^j \pi_* \sO_{\mathcal X}=0$ for all $j \geq 1$. Thus the natural map
$$
\pic(\mathcal X) \rightarrow H^2(\mathcal X, \mathbb Z)
$$
is surjective.
Assuming $\rho(X_0) \ge 2$, there exists a non-isomorphic extremal contraction
$$ 
\varphi_0: X_0 \to Y_0
$$
to a normal projective variety with $\dim Y_0 > 0$.  Choose an ample Cartier divisor $H_0$ on $Y_0$ and set $\sL_0 = \varphi^*(H_0)$.
by what precedes $\sL_0$ extends  to a holomorphic line bundle $\sL$ on $\mathcal X$ and let $\sL_t$ be its restriction to $X_t$.

Again since $\rho(X_t)=1$ we can, up to replacing $\sL$ by a multiple,
assume that
$$
\sL_t \simeq \omega_{X_t}^{\otimes -r}
$$
for $t \ne 0$. As in the first step we know by semicontinuity that
$$
h^0(X_0, \sL_0 \otimes \omega_{X_0}^{-r}) \neq 0.
$$
Since the intersection number $X_0 \cdot c_1(\sL \otimes \omega_{\mathcal X}^{-r}) \cdot c_1(A)^{\dim X-1}$ is zero we obtain $\sL_0 \simeq \omega_{X_0}^{-r}$.
Since $X_0$ is Fano this implies that $\sL_0$ is ample, a contradiction to the construction
of $\sL_0$. 
\end{proof}

The following lemma is well-known to experts, for the convenience of the
reader we reproduce the proof from \cite[Lemma 2.1]{EV85}:

\begin{lemma} \label{lemma-semicontinuity}
Let $\mathcal X$ be a normal complex space and let $\holom{\pi}{\mathcal X}{\Delta}$ be a projective fibration onto a smooth complex curve $\Delta$. Let $\sF$ be a reflexive sheaf on $\mathcal X$.  If $\pi$ has reduced fibres, the function
$$
t \rightarrow h^0(\mathcal X_t, (\sF \otimes \sO_{\mathcal X_t})^{**})
$$
is semicontinuous.
\end{lemma}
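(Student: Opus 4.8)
The plan is to compare the reflexive restriction $(\sF\otimes\sO_{\mathcal X_t})^{**}$ with the \emph{naive} restriction $\sF_t:=\sF\otimes\sO_{\mathcal X_t}$, for which classical semicontinuity is available, and to sandwich the two functions on the curve $\Delta$. First I would check that $\sF$ is flat over $\Delta$. Since $\mathcal X$ is integral and normal and $\sF$ is reflexive, hence torsion-free, a local uniformiser $s$ of $\Delta$ at a point $t_0$ pulls back to a non-zerodivisor on $\sF$; as $\Delta$ is a smooth curve the local ring $\sO_{\Delta,t_0}$ is a discrete valuation ring, over which torsion-freeness is equivalent to flatness. Thus $\sF$ is $\Delta$-flat, and Grauert's semicontinuity theorem \cite[Thm.III.4.12]{BS76} shows that $t\mapsto h^0(\mathcal X_t,\sF_t)$ is upper semicontinuous.

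Next I would produce a natural injection $\sF_t\hookrightarrow(\sF_t)^{**}$, valid for \emph{every} $t$. Locally $\sF$ is a second syzygy: one may write $\sF=\ker(\sE_0\xrightarrow{\psi}\sE_1)$ with $\sE_0,\sE_1$ locally free and image $\sC:=\sE_0/\sF\subseteq\sE_1$ torsion-free, so that $\sC$ is again $\Delta$-flat by the reasoning of the first step. Consequently the sequence $0\to\sF\to\sE_0\to\sC\to 0$ stays exact after restriction to any fibre, and $\sF_t$ embeds into the locally free sheaf $\sE_0|_{\mathcal X_t}$. Because the fibre $\mathcal X_t$ is \emph{reduced}, a subsheaf of a locally free sheaf is torsion-free; hence $\sF_t$ is torsion-free, the map $\sF_t\to(\sF_t)^{**}$ is injective, and $h^0(\mathcal X_t,\sF_t)\le h^0(\mathcal X_t,(\sF_t)^{**})$.

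The decisive point is to show that $\sF_t$ is in fact already reflexive — so that equality holds — for all but finitely many $t$. Here I would invoke that over $\C$ the general fibre $\mathcal X_t$ of a morphism from the normal variety $\mathcal X$ is again normal, the locus of non-normal fibres being a proper closed, hence finite, subset of $\Delta$. Granting this, $\mathcal X_t$ is $S_2$, the restricted sequence $0\to\sF_t\to\sE_0|_{\mathcal X_t}\xrightarrow{\psi_t}\sE_1|_{\mathcal X_t}$ realises $\sF_t$ as the kernel of a morphism of locally free sheaves on a normal variety, and such a kernel is reflexive (sections extend across the codimension-two locus where $\mathcal X_t$ is singular or $\psi_t$ degenerates). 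Therefore $h^0(\mathcal X_t,\sF_t)=h^0(\mathcal X_t,(\sF_t)^{**})$ for general $t$.

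Finally I would assemble these facts. Fix $t_0$ and a neighbourhood containing no other special point. For $t\neq t_0$ nearby the previous step gives $h^0((\sF_t)^{**})=h^0(\sF_t)$, so by Grauert $\limsup_{t\to t_0}h^0((\sF_t)^{**})=\limsup_{t\to t_0}h^0(\sF_t)\le h^0(\sF_{t_0})$, and combining with the inclusion $h^0(\sF_{t_0})\le h^0((\sF_{t_0})^{**})$ of the second step yields $h^0((\sF_{t_0})^{**})\ge\limsup_{t\to t_0}h^0((\sF_t)^{**})$, which is exactly semicontinuity at $t_0$. The main obstacle is precisely that double-dualisation does not commute with restriction to the merely $S_1$ fibres; the whole argument is engineered to confine this failure to the finitely many non-normal fibres, where semicontinuity on a curve permits an upward jump, while the generic behaviour is pinned down by normality of the general fibre and the flat naive restriction.
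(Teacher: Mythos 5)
Your overall architecture is the same as the paper's: flatness of $\sF$ over the curve, Grauert semicontinuity \cite[Thm.III.4.12]{BS76} for the naive restriction $\sF_t$, an injection $\sF_t \hookrightarrow (\sF_t)^{**}$ for \emph{every} $t$ via torsion-freeness on the reduced fibre, reflexivity of $\sF_t$ for \emph{general} $t$, and the same sandwich argument at the end. Two ingredients differ. For everywhere-torsion-freeness the paper restricts $\sF$ to its locally free locus $i:W\hookrightarrow \mathcal X$ and pushes the restriction sequence forward using $i_*i^*\sF\simeq\sF$, while you use the local second-syzygy presentation $0\to\sF\to\sE_0\to\sC\to 0$ with $\sC\subset\sE_1$ torsion-free; your version is correct and arguably cleaner (your direct DVR argument for flatness also replaces the paper's appeal to Hironaka's flattening theorem). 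For generic reflexivity the paper simply cites \cite[Thm.12.2.1]{Gro66}, whereas you give a self-contained argument via generic normality of the fibres, which is a legitimate standard fact in characteristic zero.

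It is in that last step that there is a genuine gap. Flatness of $\sC$ over $\Delta$ gives exactness of $0\to\sF_t\to\sE_0|_{\mathcal X_t}\to\sC_t\to 0$, i.e. $\sF_t=\ker(\sE_0|_{\mathcal X_t}\to\sC_t)$. What you actually use is the stronger claim that $0\to\sF_t\to\sE_0|_{\mathcal X_t}\xrightarrow{\psi_t}\sE_1|_{\mathcal X_t}$ is exact, i.e. $\sF_t=\ker\psi_t$. The two agree only if $\sC_t\to\sE_1|_{\mathcal X_t}$ is injective, equivalently if $\mathrm{Tor}_1^{\sO_{\mathcal X}}(\sE_1/\sC,\sO_{\mathcal X_t})=0$, and this does \emph{not} follow from the flatness of $\sC$: the quotient $\sE_1/\sC$ need not be flat over $\Delta$. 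A concrete example: take $\mathcal X=\C^2$ with coordinates $(t,z)$, $\pi=t$, and $\psi=(z,-t):\sO^{\oplus 2}\to\sO$; then $\sF=\ker\psi\simeq\sO$, $\sC=\mathfrak{m}_0=(t,z)$, $\sE_1/\sC$ is a skyscraper, and on the fibre $\{t=0\}$ one has $\ker\psi_0=0\oplus\sO_{\mathcal X_0}$, which strictly contains the image of $\sF_0$. So the identification $\sF_t=\ker\psi_t$ genuinely fails at special fibres, and even for general $t$ it requires an argument you have not given. The repair is short but must be stated: the associated subvarieties of $\sE_1/\sC$ which are contained in fibres of $\pi$ lie over a discrete subset of $\Delta$ (equivalently, $\sE_1/\sC$ is $\Delta$-flat away from such fibres), so for $t$ outside this set the full sequence restricts exactly and your kernel representation, hence reflexivity of $\sF_t$ on the normal fibre, holds. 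With this insertion (and with ``finite'' weakened to ``discrete/locally finite'', which is all that semicontinuity on a disc requires and is the same caveat implicit in the paper's ``finitely many $t$''), your proof is complete.
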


\begin{proof}
Fix a relative ample line bundle $\sO_\mathcal X(1)$. Then $\sF(m) = \sF \otimes \sO_\mathcal X(m)$ is reflexive for any  $m \in \N$,  hence torsion-free. Therefore its direct image $f_* (\sF(m))$ is torsion-free as well, hence locally free.
Since $\Delta$ is a smooth curve,
$\sF$ is flat over $\Delta$, as a direct consequence of Hironaka's flattening theorem \cite{Hir75} (see also \cite[Thm.3.20]{Kol22}). 
Therefore the function
$$
t \rightarrow h^0(\mathcal X_t, \sF \otimes \sO_{\mathcal X_t})
$$
is upper semi-continuous by \cite[Thm.III.4.12]{BS76}. 
For a general fibre $\mathcal X_t$ the restriction $\sF \otimes \sO_{\mathcal X_t}$ is reflexive \cite[Thm.12.2.1]{Gro66}, so we have
$$
h^0(\mathcal X_t, \sF \otimes \sO_{\mathcal X_t}) = h^0(\mathcal X_t, (\sF \otimes \sO_{\mathcal X_t})^{**})
$$
except for finitely many $t \in \Delta$. Thus we are done if we show that for {\em every} fibre $\mathcal X_0$
the sheaf  $\sF \otimes \sO_{\mathcal X_0}$ is torsion-free. Indeed, $X_0$ being reduced,
we will then get an inclusion  $\sF \otimes \sO_{\mathcal X_0} \hookrightarrow (\sF \otimes \sO_{\mathcal X_0})^{**}$ and 
an inequality
$$
h^0(\mathcal X_0, \sF \otimes \sO_{\mathcal X_0}) \leq h^0(\mathcal X_0, (\sF \otimes \sO_{\mathcal X_0})^{**}).
$$

{\em Proof of the claim.}
Let 
$$
i: W \hookrightarrow \mathcal X
$$
be the locus where $\sF$ is locally free, then 
$\mathcal X \setminus W$ has codimension at least two. 
Set $W_0:=W \cap \mathcal X_0$, and let 
$$
\holom{i_0}{W_0}{\mathcal X_0}
$$
be the inclusion. Since $\mathcal X \setminus W$ has codimension at least two and
$\mathcal X_0$ has pure codimension one, the morphism $i_0$ is dominant.

Since $\sO_{\mathcal X}(-\mathcal X_0)$ is locally free and $i_* i^* \sF \simeq \sF$ by \cite[Prop.1.6]{Har80} we obtain from the projection formula that
$$
i_* (i^*(\sF \otimes \sO_W(-W_0))) \simeq \sF \otimes \sO_{\mathcal X}(-\mathcal X_0).
$$
Since $\sF$ is locally free on $W$ we have an exact restriction sequence
$$
0 \rightarrow i^*(\sF \otimes \sO_W(-W_0)) \rightarrow i^* \sF \rightarrow i_0^* \sF \rightarrow 0.
$$
Pushing forward by $i$ we obtain an exact sequence
$$
0 \rightarrow \sF \otimes \sO_{\mathcal X}(-\mathcal X_0) \rightarrow \sF \rightarrow (i_0)_* i_0^* \sF.
$$
On the other hand, by right exactness of the tensor product and torsion-freeness of $\sF \otimes \sO_{\mathcal X}(-\mathcal X_0)$ we have an exact sequence
$$
0 \rightarrow \sF \otimes \sO_{\mathcal X}(-\mathcal X_0) \rightarrow \sF \rightarrow \sF \otimes \sO_{\mathcal X_0} \rightarrow 0.
$$
Thus we have an injection $\sF \otimes \sO_{\mathcal X_0} \rightarrow (i_0)_* i_0^* \sF$.
Yet the latter is torsion-free since $i_0$ is dominant.
\end{proof}

\begin{proposition} \label{prop:semi}  Let $X$ be a klt degeneration of $\PP^n$. Then there exists a reflexive sheaf $\sO_{\mathcal X}(1) $ on $\mathcal X$  with the following properties.
\begin{itemize}
\item $\sO_{\mathcal X}(1) \otimes \sO_{X_t} = \sO_{X_t}(1) $ for all $t \ne 0$;
\item $\sO_{X_0}(1) := (\sO_{\mathcal X}(1)  \otimes \sO_{X_0})^{**} $ is a reflexive sheaf with $$
h^0(X_0,\sO_{X_0}(1)) \geq n+1;
$$
\item $\sO_{\mathcal X}(-n-1) := (\sO_{\mathcal X}(1)^{\otimes (-n-1)})^{**} \simeq \omega_{\mathcal X}$.
\end{itemize}
In particular if $\mathcal H$ is the Weil $\Q$-Cartier class associated to $\sO_{\mathcal X}(1)$,
it is relatively ample.
\end{proposition}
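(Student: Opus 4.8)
The plan is to construct $\sO_{\mathcal X}(1)$ as an $(n+1)$-st root of $\omega_{\mathcal X}^{-1}$, first over the punctured family, where everything is smooth, and then to extend it across the central fibre as a reflexive sheaf.

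First I would work over $\Delta^* := \Delta \setminus \{0\}$ and set $\mathcal X^* := \pi^{-1}(\Delta^*)$. Since every fibre $X_t$ with $t \neq 0$ is isomorphic to $\PP^n$, the map $\mathcal X^* \to \Delta^*$ is a proper holomorphic submersion all of whose fibres are biholomorphic, hence by Fischer--Grauert a holomorphic $\PP^n$-bundle. As $\Delta^*$ is a one-dimensional Stein space homotopy equivalent to a circle, Cartan's Theorem B and $H^3(\Delta^*,\Z)=0$ give $H^2(\Delta^*,\sO_{\Delta^*}^*)=0$ via the exponential sequence, and every holomorphic line bundle on the open Riemann surface $\Delta^*$ is trivial, so $\pic(\Delta^*)=0$. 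The vanishing of this analytic Brauer group lets me lift the $PGL_{n+1}$-structure to a $GL_{n+1}$-structure and write $\mathcal X^* \simeq \PP(E)$. Thus the relative hyperplane bundle $L^* := \sO_{\PP(E)}(1)$ is a genuine line bundle on $\mathcal X^*$ restricting to $\sO_{\PP^n}(1)$ on each $X_t$. Comparing with the relative canonical bundle and using $\pic(\Delta^*)=0$ to absorb pullbacks from the base, I obtain $(L^*)^{\otimes (n+1)} \simeq \omega_{\mathcal X^*}^{-1} = \omega_{\mathcal X}^{-1}|_{\mathcal X^*}$.

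Next I extend $L^*$ to a reflexive sheaf on $\mathcal X$. Choosing a divisor representing $L^*$ and taking its Zariski closure yields a Weil divisor $\mathcal H$ on $\mathcal X$; I then set $\sO_{\mathcal X}(1) := \sO_{\mathcal X}(\mathcal H)$, a reflexive rank-one sheaf. This is independent of the choices, since two extensions differ by a divisor supported on the reduced irreducible fibre $X_0 = \pi^*(0)$, which is principal (cut out by the coordinate $t$), so the associated reflexive sheaves coincide. Property 1 is then immediate, as $\sO_{\mathcal X}(1)$ is locally free along the smooth locus $\mathcal X^*$ and equals $L^*$ there, hence restricts to $\sO_{\PP^n}(1)$ on each $X_t$. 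For property 3 I compare the two reflexive sheaves $(\sO_{\mathcal X}(1)^{\otimes(n+1)})^{**}$ and $\omega_{\mathcal X}^{-1}$: they agree on $\mathcal X^*$ by the previous paragraph, so $(n+1)\mathcal H$ and $-K_{\mathcal X}$ differ by a multiple of the principal divisor $X_0$, whence $\sO_{\mathcal X}(-n-1) \simeq \omega_{\mathcal X}$. In particular $\mathcal H$ is $\Q$-Cartier, and being numerically $\tfrac{1}{n+1}(-K_{\mathcal X})$ it is relatively ample by the remark following Proposition \ref{proposition-picard-number}.

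Finally, property 2 follows from semicontinuity. Applying Lemma \ref{lemma-semicontinuity} to $\sF = \sO_{\mathcal X}(1)$ — all fibres of $\pi$ being reduced — the function $t \mapsto h^0(X_t, (\sO_{\mathcal X}(1)\otimes\sO_{X_t})^{**})$ is upper semicontinuous. On a general fibre its value is $h^0(\PP^n,\sO_{\PP^n}(1)) = n+1$, so $h^0(X_0,\sO_{X_0}(1)) \geq n+1$, as claimed. The main obstacle I anticipate is the construction of $L^*$ together with the clean passage across the codimension-one locus $X_0$: the $(n+1)$-st root genuinely exists only on $\mathcal X^*$, and the whole point of the klt, possibly non-$\Q$-factorial setting is that $\mathcal H$ need not be Cartier on $\mathcal X$, so the bookkeeping with the principal divisor $X_0$ and the reflexive hulls is exactly where care is required.
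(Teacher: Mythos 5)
Your proof follows the same architecture as the paper's: construct a relative hyperplane bundle $L^*$ on $\mathcal X^* = \mathcal X \setminus X_0$, extend it across the central fibre by closing up a divisor, deduce the second property from Lemma \ref{lemma-semicontinuity}, and get relative ampleness from the relative ampleness of $-K_{\mathcal X}$ (remark after Proposition \ref{proposition-picard-number}). Your construction of $L^*$ via Fischer--Grauert plus the vanishing of $H^2(\Delta^*, \sO_{\Delta^*}^*)$ is a clean, legitimate alternative to the paper's more hands-on construction. The genuine gap is at the step you yourself flag as delicate but then treat as automatic: ``choosing a divisor representing $L^*$ and taking its Zariski closure yields a Weil divisor $\mathcal H$ on $\mathcal X$.'' Here $\mathcal X$ is a complex space over a disc, so there is no Zariski topology in the algebraic sense, and the topological closure of an analytic hypersurface $D \subset \mathcal X^*$ need \emph{not} be an analytic subset of $\mathcal X$: for instance the closure in $\PP^1 \times \Delta$ of the graph of $t \mapsto e^{1/t}$ has infinitely many local branches near each point of the central fibre, hence is not analytic, and adding copies of $X_0$ does not repair this (indeed the smallest analytic set containing that graph is all of $\PP^1\times\Delta$). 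This is precisely why the paper invokes Bishop's theorem \cite{Bis64}: choosing $D \in |L^*|$, the degrees $D_t \cdot (-K_{X_t})^{n-1} = (n+1)^{n-1}$ are independent of $t$, so the volumes of the fibres $D_t$ are uniformly bounded and Bishop's compactness theorem guarantees that $\overline D$ is analytic in $\mathcal X$. The bound is available in your setup too, since $L^*$ restricts to $\sO_{\PP^n}(1)$ on every fibre, but without invoking it your divisor $\mathcal H$ --- and with it the sheaf $\sO_{\mathcal X}(1)$ --- is simply not constructed.

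A secondary instance of the same algebraic-versus-analytic slippage occurs in your uses of excision for class groups: both the well-definedness of $\sO_{\mathcal X}(\mathcal H)$ and property 3 rest on the claim that a Weil divisor on $\mathcal X$ which becomes principal on $\mathcal X^*$ is linearly equivalent to a multiple of $X_0$. For algebraic varieties this is the standard exact sequence $\Z [X_0] \rightarrow \Cl(\mathcal X) \rightarrow \Cl(\mathcal X^*)$; for complex spaces the meromorphic function witnessing the equivalence on $\mathcal X^*$ may fail to extend across $X_0$ (it can differ from an extendable one by a unit of $\Delta^*$ such as $e^{1/t}$). The statement is nevertheless true here and can be proved by pushing forward to the base: the rank-one reflexive sheaf $\mathcal L := \bigl(\sO_{\mathcal X}(\mathcal H)^{\otimes(n+1)} \otimes \omega_{\mathcal X}\bigr)^{**}$ is trivial on $\mathcal X^*$, so $\pi_* \mathcal L$ is torsion-free of rank one, hence invertible and trivial on the disc after shrinking; the section of $\mathcal L$ corresponding to a generator vanishes nowhere on $\mathcal X^*$, so its divisor equals $k X_0 = \mathrm{div}(t^k)$ and $\mathcal L \simeq \sO_{\mathcal X}$. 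The paper itself dispatches this point with ``the last claim is clear,'' so it is a minor complaint by comparison --- but in your write-up the claim does real work and, as stated, is not justified.
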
 

\begin{proof}  
By Proposition \ref{proposition-picard-number}, the divisor $-K_{\mathcal X}$ is $\pi$-ample.

Set $\mathcal X^* = \mathcal X \setminus X_0$. Since $X_0$ is reduced, we can choose (up to restricting $\Delta$) local sections $s_1, \ldots, s_{n-1}$ spanning a hyperplane
in a general fibre. Thus there is a line bundle $\sL$ on $\mathcal X^*$  such that $\sL \otimes \sO_{X_t} = \sO_{X_t}(1)$ for all $t \ne 0$. 
Choose $D \in \vert \sL \vert$ and set $D_t = D \cap X_t$ for $t \ne 0$ 
Then $$ D_t \cdot (-K_{X_t})^{n-1} = (n+1)^{n-1}  $$ is constant, hence  by \cite{Bis64}  the closure $\overline D $ of $D$ is analytic set in $\mathcal X$.
 Now let $D_0 = \overline D \cap X_0$ and
set 
$$ \sO_{\mathcal X}(1)  = \sO_{\mathcal X}(\overline D).$$ 
Then $(\sO_{\mathcal X}(1) \otimes \sO_{X_0})^{**} = \sO_{X_0}(D_0)$ since the two sheaves
coincide on $X_{0,nons}$ and $X_0$ is normal. Thus we obtain 
$h^0(X_0, \sO_{X_0}(1)) \geq n+1$ by Lemma \ref{lemma-semicontinuity}. 
The last claim is clear.
\end{proof} 

\begin{proposition} \label{prop:easy} 
Let $X$ be a klt degeneration of $\PP^n$. If $\sO_X(1)=\sO_X(H)$ is locally free, then 
$X \simeq \PP^n$. 
\end{proposition}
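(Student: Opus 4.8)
The plan is to reduce the statement to Theorem~\ref{thm:delta-0} and Corollary~\ref{cor:quadric}. First I would record the numerical data. Since $\sO_X(1)=\sO_X(H)$ is locally free, $H$ is an ample Cartier divisor: indeed $-K_X\sim_\Q(n+1)H$ is ample because $X$ is $\Q$-Fano by Proposition~\ref{proposition-picard-number}, and a positive multiple of the Cartier divisor $H$ being ample forces $H$ itself to be ample. By Proposition~\ref{prop:semi} we have $h^0(X,\sO_X(H))\geq n+1$. Finally $H^n=1$: the relatively ample class $\mathcal H$ associated to $\sO_{\mathcal X}(1)$ restricts to $\sO_{\PP^n}(1)$ on a general fibre, so $(\mathcal H|_{X_t})^n=1$; as a suitable multiple of $\mathcal H$ is Cartier, this self-intersection number is constant in the flat family $\pi$, whence $H^n=(\mathcal H|_{X_0})^n=1$. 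In particular $h^0(X,\sO_X(H))\geq n+1=n+H^n$, so the numerical hypothesis of Theorem~\ref{thm:delta-0} is satisfied.

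The crux, and the step I expect to cost the most work, is to prove that $\varphi_{|H|}$ is generically finite onto its image. Because $H$ is Cartier I can resolve the base locus of $|H|$ directly, obtaining a birational morphism $\mu\colon Y\to X$ from a normal projective variety with $\mu^*H\sim M_Y+F_Y$, where $M_Y$ is globally generated (the mobile part) and $F_Y\geq 0$ is the fixed-plus-exceptional part; let $g\colon Y\to Z\subset\PP^N$ be the morphism defined by the complete system $|M_Y|$, with $N=h^0(X,H)-1\geq n$. Arguing by contradiction, suppose $g$ is not generically finite, so that $d:=\dim Z$ satisfies $1\leq d\leq n-1$ and $M_Y^n=0$. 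Since $Z$ is nondegenerate of positive codimension in $\PP^N$ we have $\deg Z\geq 2$, and a fibre computation gives $M_Y^{\,d}\cdot(\mu^*H)^{n-d}\geq\deg Z\geq 2$: numerically $M_Y^{\,d}$ is $\deg Z$ times the class of a general fibre $F$ of $g$, and $(\mu^*H)^{n-d}\cdot F=H^{n-d}\cdot\mu_*F\geq 1$ because $H$ is ample and Cartier. On the other hand $(\mu^*H)^n=H^n=1$, and since $M_Y=\mu^*H-F_Y$ with $F_Y$ effective and $\mu^*H$ nef, we get $M_Y\cdot(\mu^*H)^{n-1}=1-F_Y\cdot(\mu^*H)^{n-1}\leq 1$. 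Now the numbers $a_k:=M_Y^{\,k}\cdot(\mu^*H)^{n-k}$ form a log-concave sequence of nef intersection numbers by the Hodge-type inequality \cite[Cor.2.5.3]{BS95}; from $a_0=1$ and $a_1\leq 1$ the ratios $a_{k+1}/a_k$ are nonincreasing and bounded by $1$, forcing $a_k\leq 1$ for $0\leq k\leq d$, which contradicts $a_d\geq 2$. Hence $\varphi_{|H|}$ is generically finite.

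With generic finiteness established, all hypotheses of Theorem~\ref{thm:delta-0} hold for the nef and big Weil $\Q$-Cartier divisor $H$, so that theorem applies and shows that $H$ is base point free (we already know it is Cartier). Since $H^n=1$, Corollary~\ref{cor:quadric} then yields $X\simeq\PP^n$, as desired.
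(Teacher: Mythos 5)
Your proof is correct, but it follows a genuinely different and longer route than the paper's. The paper's own proof is a two-line reduction to Fujita's theory of polarised varieties: since $H$ is an ample \emph{Cartier} divisor, $(X,H)$ is a polarised variety, Proposition \ref{prop:semi} gives $\Delta(X,H)=n+H^n-h^0(X,\sO_X(H))\leq 0$, and \cite[Thm.4.2]{Fuj90} (non-negativity of the $\Delta$-genus for an ample Cartier polarisation, together with the structure of $\Delta$-genus zero pairs) yields at once that $|H|$ is basepoint-free and defines an isomorphism onto $\PP^n$; no generic finiteness needs to be checked, because that hypothesis in Theorem \ref{thm:delta-0} is only there to compensate for $D$ being merely a nef and big Weil $\Q$-Cartier divisor. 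By routing through Theorem \ref{thm:delta-0} you are forced to prove generic finiteness of $\varphi_{|H|}$ by hand; your argument for this is sound, and it has the merit of isolating exactly where Cartier-ness enters (the bound $H^{n-d}\cdot\mu_*F\geq 1$ fails for fractional intersection numbers, cf. Example \ref{example-degeneration-cone}, where $H\cdot l=\frac{1}{2}$). One caveat: the inequality of \cite[Cor.2.5.3]{BS95}, in the form the paper uses it, is the product inequality $\left(A^kB^{n-k}\right)^n\geq (A^n)^k\cdot(B^n)^{n-k}$, which is vacuous in your situation because $M_Y^n=0$; the log-concavity $a_k^2\geq a_{k-1}a_{k+1}$ that you actually invoke is the Khovanskii--Teissier inequality for nef classes (true and standard, see \cite[Sect.1.6]{Laz04}), and in any case the monotonicity you need follows more simply, exactly as in Step 2 of the paper's proof of Theorem \ref{thm:delta-0}, from $a_{k-1}-a_k=(\mu^*H)^{n-k}\cdot M_Y^{k-1}\cdot F_Y\geq 0$, valid since $\mu^*H$ and $M_Y$ are nef and $F_Y$ is effective; this also sidesteps the issue of vanishing terms in your ratio argument. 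With that citation repaired your proof is complete: what the paper's approach buys is brevity, while yours runs entirely on machinery developed in the paper and makes the role of the Cartier hypothesis transparent.
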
 

\begin{proof}
This can be shown using Wahl's theorem \cite{Wah83} and semicontinuity for the sheaf
of Euler vector field $T_{\mathcal X/\Delta} \otimes \sO_{\mathcal X}(-1)$. 

More directly note that by Proposition \ref{prop:semi} the $\Delta$-genus of $(X,H)$
is not positive. Thus by \cite[Thm.4.2]{Fuj90} the $\Delta$-genus is zero and $|H|$
is basepoint-free, defining an isomorphism onto $\PP^n$.
\end{proof}

\section{The rational fibration} 
\label{section-rational}

We fix the setup for this section:

\begin{setup} \label{setup-degen} {\rm Let
$$
\holom{\pi}{\mathcal X}{\Delta}
$$
be a klt degeneration of the projective space $\mathbb P^n$.
By Proposition \ref{prop:semi}  the central fibre
$X$ comes with a Weil $\Q$-Cartier divisor $H$ such that $H^n=1$ and 
$$
h^0(X, \sO_X(H)) \geq n+1.
$$ 
In what follows we will use the notation $\sO_X(1) = \sO_X(H)$ for the reflexive sheaf
correspond to $H$.
}
\end{setup}
We start with a consequence of Theorem \ref{thm:delta-0}:

\begin{corollary} \label{cor:gen-finite} Let $X$ be a klt degeneration of $\mathbb P^n$ and 
$$
\merom{f}{X}{\PP^N}
$$
be the rational morphism defined by the linear system $|\sO_{X}(1)|$.
If $f$ is generically finite onto its image, then $X \simeq \PP^n$. 
\end{corollary}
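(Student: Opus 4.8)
The plan is to recognise this statement as a direct application of Theorem \ref{thm:delta-0} to the divisor $D = H$, so that the whole task reduces to verifying the hypotheses of that theorem in the degeneration setting. Three things must be checked: that $H$ is nef and big, that the numerical inequality $h^0(X, \sO_X(H)) \geq n + H^n$ holds, and that the rational map given by $|H|$ is generically finite onto its image. The last of these is precisely the standing assumption on $f = \varphi_{|H|}$, so only the first two require a word.

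First I would establish the positivity of $H$. By Proposition \ref{proposition-picard-number} the central fibre $X$ is $\Q$-Fano with $\rho(X) = 1$, and the degeneration supplies the relation $-K_X \simeq (n+1) H$. Since $N^1(X) \otimes \R$ is one-dimensional and $-K_X$ is ample, the divisor $H$, being a positive rational multiple of $-K_X$, is itself ample; in particular it is nef and big. (The bigness alone is anyway immediate from $H^n = 1 > 0$.)

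Next I would simply read off the numerics recorded in Setup \ref{setup-degen}: one has $H^n = 1$ and $h^0(X, \sO_X(H)) \geq n+1 = n + H^n$, which is exactly the inequality demanded by Theorem \ref{thm:delta-0}. With all three hypotheses in hand, that theorem applies and tells us that $H$ is Cartier and basepoint-free. To finish, the cleanest route is to invoke Corollary \ref{cor:quadric}: since $H^n = 1$, its first alternative yields $X \simeq \PP^n$ at once. Alternatively, having learned that $\sO_X(H)$ is locally free, one may quote Proposition \ref{prop:easy} directly.

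There is essentially no genuine obstacle here: the corollary is a bookkeeping statement, and all the real work has already been done in the proof of Theorem \ref{thm:delta-0}. The only point requiring any thought is the positivity of $H$, and even that is forced by the Picard number one and the anticanonical relation. The conceptual content is merely the observation that a klt degeneration of $\PP^n$ automatically carries the optimal numerical profile $H^n = 1$, $h^0(X,\sO_X(H)) \geq n+1$ under which generic finiteness of $\varphi_{|H|}$ suffices to recover the projective space.
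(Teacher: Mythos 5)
Your proposal is correct and follows essentially the same route as the paper: the paper's proof likewise cites Proposition \ref{prop:semi} for $h^0(X,\sO_X(1)) \geq n+1$, applies Theorem \ref{thm:delta-0} to conclude that $H$ is Cartier, and finishes with Corollary \ref{cor:quadric} using $H^n=1$. The only difference is that you spell out the positivity of $H$ (via $\rho(X)=1$ and $-K_X \simeq (n+1)H$), which the paper leaves implicit; this is a harmless and accurate elaboration.
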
 

\begin{proof}
By Proposition \ref{prop:semi} we have $h^0(X, \sO_X(1)) \geq n+1$. Since $f$ is generically finite onto its image, we know by Theorem \ref{thm:delta-0} that $H$ is Cartier.
Thus we conclude by Corollary \ref{cor:quadric}. 
\end{proof}

The semicontinuity lemma \ref{lemma-semicontinuity} applies to the reflexive sheaf
$(T_{\mathcal X/\Delta} \otimes \sO_{\mathcal X}(-1))^{**}$, so the Euler vector fields
on $\PP^n$ degenerate to vector fields on the central fibre vanishing along an element of $|H|$. Let us first consider an instructive example:

\begin{example} \cite[Ch.8]{Pin74} \cite[Ex.3.3]{HN13} \label{example-degeneration-cone}
Consider Example \ref{ex:basic} in the case $n=2, d=2$, i.e. a family of projective planes
degenerating to the cone $X$ obtained by contracting the negative section $E$ of $\mathbb F_4$. If $\holom{\mu}{\mathbb F_4}{X}$ is the contraction, we have $\sO_X(1) = \mu_* \sO_{\mathbb F_4}(2F)$ and $\sO_X(1)$ is $2$-Cartier but not Cartier. The minimal resolution $\mu$ is functorial \cite[Prop.1.2]{BW74}, so every vector field lifts to $\mathbb F_4$. Thus we have
$$
H^0(X, T_X(-1)) \hookrightarrow H^0(\mathbb F_4, T_{\mathbb F_4} \otimes \sO(-2F))
$$
Let $\holom{\pi}{\mathbb F_4}{\PP^1}$ be the ruling, and consider the twisted tangent sequence
$$
0 \rightarrow T_{\mathbb F_4/\PP^1}(-2F) \rightarrow T_{\mathbb F_4}(-2F) \rightarrow
\pi^* T_{\PP^1}(-2F) \simeq \sO_{\mathbb F_4} \rightarrow 0. 
$$ 
Since $T_{\mathbb F_4}$ is not a direct sum we have 
$$
H^0(\mathbb F_4, T_{\mathbb F_4/\PP^1}(-2F)) = H^0(\mathbb F_4, T_{\mathbb F_4}(-2F)),
$$
so every Euler vector field on $X$ is tangent to the ruling of the cone, so the closures of general leaves are lines $l_y$. Note that this implies that $\sO_X(1) \cdot l_y = \frac{1}{2}$.

Fix now a general point $x \in X$ and an Euler vector field vanishing in $x$. For a general point $y \in X$, the line $l_y$ does not pass through $x$. However if we take the union of the lines $l_x+l_y$, they connect $x$ and $y$, and arise as a degeneration of a line in $\PP^2$ (i.e. as the degeneration of the closure of a leaf of an Euler vector field on $\PP^2$).
\end{example}

In what follows we will not work with the degenerate Euler fields on $X$, but rather with 
the more algebraic counterpart, the degenerations of lines: in the situation of Setup \ref{setup-degen}, choose a general point $x \in X$ and a $\pi$-section $s: \Delta \rightarrow \mathcal X$
passing through $x$. For $t \neq 0$, the family of lines $l_t \subset X_t$
passing through $s(t)$ is a covering family of $X_t$, so taking the closure
in the relative cycle space $\mathcal C(\mathcal X/\Delta))$ determines a covering family of effective cycles
$\sum b_j l_j$
on $X$ such that $x \in \supp \sum b_j l_j$ and
\begin{equation} \label{eqn-same-class}
l_t = \sum b_j l_j  \in H_2(\mathcal X, \Z).
\end{equation} 
Choose now $y \in X$ a general point, then there exists a
cycle $\sum b_j l_j$ passing through $x$ and $y$, 
and we denote by $l \subset \sum b_j l_j$ an irreducible component passing through $y$. Since $y \in X$ is general, the curves $l$ form a dominating family of rational curves of $X$
and we call $l$ a {\it degenerate line.}

\begin{theorem} \label{theorem-euler-fields}
Let $X$ be a klt degeneration of $\PP^n$ and
$$
\merom{f}{X}{\PP^N}
$$
be the rational morphism defined by the linear system $|\sO_{X}(1)|$.
If $X \not\simeq \PP^n$, then $f$ is not generically finite onto its image and the degenerate lines $l$ are contained the fibres of $f$.
\end{theorem}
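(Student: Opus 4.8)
The first assertion is just the contrapositive of Corollary \ref{cor:gen-finite}: were $f$ generically finite onto its image, we would have $X \simeq \PP^n$, so the hypothesis $X \not\simeq \PP^n$ forces $f$ to be non generically finite. The real content is therefore the statement on degenerate lines, and my plan is to prove that a degenerate line $l$ through a general point is contracted by $f$. After resolving the indeterminacies of $\varphi_{|H|}$ this amounts to showing that the strict transform of $l$ meets the base-point-free mobile part trivially.

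First I would record the one numerical identity that drives everything. By Proposition \ref{prop:semi} and Proposition \ref{proposition-picard-number} we have $-K_X \sim_\Q (n+1)H$ with $H$ ample, so $H \cdot C > 0$ for every curve $C$. By \eqref{eqn-same-class} the cycle $\sum b_j l_j$ lies in the homology class $[l_t]$, and since $\mathcal H$ restricts to $\sO_{\PP^n}(1)$ on a general fibre one has $\mathcal H \cdot l_t = 1$; intersecting the constant class $[l_t]$ with the $\Q$-Cartier class $\mathcal H$ on $\mathcal X$ then yields $\sum_j b_j (H \cdot l_j) = 1$, with all $b_j \in \Z_{>0}$ and all $H \cdot l_j > 0$. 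Two consequences will be used: every component satisfies $H \cdot l_j \le 1$, and if a component has $H \cdot l_j = 1$ then the whole cycle is irreducible and equal to that component.

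Next I would set up the resolution. Since the mobile and fixed parts $M, B$ of $|H|$ need not be $\Q$-Cartier, I pass to a $\Q$-factorialisation $\eta \colon X' \to X$ as in Notation \ref{notation-Qfactorial}, so that $\eta^* H = H' = M' + B'$ with $M', B'$ now $\Q$-Cartier and the linear systems unchanged. Applying Lemma \ref{lemma-indeterminacies} to $M'$ gives $\mu \colon Y \to X'$ with $\mu^* M' \sim_\Q M_Y + \sum a_i E_i$, where $a_i > 0$, $\Supp(\sum a_i E_i) = \mbox{Exc}(\mu)$, and $M_Y$ is globally generated and defines a resolution of $\varphi_{|M'|} = \varphi_{|H'|}$. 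As $\eta$ is small, contraction of $l$ by $f$ is equivalent to $M_Y \cdot \tilde l = 0$, where $\tilde l \subset Y$ is the strict transform of $l$ (via its strict transform $l' \subset X'$). For $l$ through a general point, $\tilde l$ is contained in no $E_i$, so $E_i \cdot \tilde l \ge 0$, and using $B'$ effective together with the identity above I obtain the chain
$$
0 \le M_Y \cdot \tilde l \le \mu^* M' \cdot \tilde l = M' \cdot l' \le H' \cdot l' = H \cdot l \le 1 .
$$
Hence $M_Y \cdot \tilde l \in \{0,1\}$, and the entire problem reduces to excluding the value $1$.

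The crux, and the step I expect to be the main obstacle, is ruling out $M_Y \cdot \tilde l = 1$. If this held for general $l$, all inequalities above would be equalities: $B' \cdot l' = 0$ (hence $B = 0$, since $\rho(X) = 1$ and $H$ is ample), $\sum a_i (E_i \cdot \tilde l) = 0$ (hence $\tilde l$ avoids $\mbox{Exc}(\mu)$, so $l$ avoids $\Bs|H|$), and $H \cdot l = 1$. The last equality forces, by the second consequence of the numerical identity, the cycle through $x$ and $y$ to be irreducible and equal to $l$; the first two show that $f$ is a morphism along $l$ mapping it isomorphically onto a line of $\PP^N$ (of degree $M_Y \cdot \tilde l = 1$). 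Thus two general points of $X$ would be joined by an irreducible curve of the family carried isomorphically to a line, so $f$ would separate two general points and be generically injective, hence generically finite onto its image, contradicting the first assertion. Therefore $M_Y \cdot \tilde l = 0$, i.e. the degenerate lines lie in the fibres of $f$.
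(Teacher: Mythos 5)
Your setup and reduction coincide with the paper's proof: the first assertion is indeed just Corollary \ref{cor:gen-finite}, and the chain
$$
0 \le M_Y \cdot \tilde l \le \mu^* M' \cdot \tilde l = M' \cdot l' \le H' \cdot l' = H \cdot l \le 1,
$$
together with $M_Y$ being nef and Cartier, leaves only the cases $M_Y \cdot \tilde l \in \{0,1\}$; in the equality case you correctly deduce $H \cdot l = 1$, hence that the limit cycle is irreducible, joins the two general points $x$ and $y$, and is mapped birationally onto a line. The genuine gap is the very last inference: \emph{``$f$ separates two general points, hence is generically injective, hence generically finite''} is a non sequitur. Separating two general points is a property of \emph{every} dominant rational map with positive-dimensional image: the linear projection $\PP^n \dashrightarrow \PP^{n-1}$ from a point sends general $x \neq y$ to distinct points, yet all its fibres are curves. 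Generic injectivity (injectivity on a dense open set, which is what would give generic finiteness) would require controlling the points of the positive-dimensional fibre of $f$ through a general $x$; but such points $y$ are never general in $X$, and your construction of an irreducible degenerate line through $x$ and $y$ — and all the equalities derived from it — is only available when $y$ is general. So no contradiction has been reached, and the case $M_Y \cdot \tilde l = 1$ remains open.

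To close that case one must use the full geometric output of the equality, which is what the paper does: fixing the general point $x$, the degenerate lines through $x$ sweep out $X$ as $y$ varies, so their images under $\varphi_{|M_Y|}$ are curves of $\sO_Z(1)$-degree one passing through the fixed general point $\varphi_{|M_Y|}(x)$ and dominating $Z$. By \cite[V,Prop.2.9]{Kol96} this forces $c_1(\sO_Z(1))^{\dim Z} \le 1$, and then Fujita's inequality $\Delta(Z, \sO_Z(1)) \ge 0$ from \cite[Thm.4.2]{Fuj90}, combined with $h^0(Z, \sO_Z(1)) = h^0(X, \sO_X(H)) \ge n+1$, yields $\dim Z \ge n$ — contradicting the already established fact that $f$ is not generically finite. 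A secondary flaw in your write-up: the parenthetical claim that $B' \cdot l' = 0$ forces $B = 0$ ``since $\rho(X)=1$'' is unjustified, because $B$ need not be $\Q$-Cartier (this failure actually occurs in Example \ref{the-example}), and an effective divisor can have intersection number zero with a dominating family of curves; fortunately neither this claim nor the avoidance of $\mbox{Exc}(\mu)$ is needed once the argument is completed as above.
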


\begin{remark*}
Analogously to Example \ref{example-degeneration-cone} we can show that the degenerate lines
on $X$ are the closures of leaves of Euler vector fields on $X$. Thus an equivalent formulation
of the statement is that that the Euler vector fields are tangent to the $f$-fibres.
\end{remark*}

\begin{proof}
Since $X \not\simeq \PP^n$, we know by Corollary \ref{cor:gen-finite} that $f$ is not generically finite onto its image.
Since $H$ is ample and $l \subset \sum b_j l_j$ it follows by \eqref{eqn-same-class} that
$$
H \cdot l \leq 1,
$$
and equality holds if and only if $l=\sum b_j l_j$, i.e., $l$ is a rational curve passing through $x$ and $y$.

Following Notation \ref{notation-Qfactorial}, 
let $\holom{\eta}{X'}{X}$ be a $\Q$-factorialisation of $X$ and
$$
H' \simeq M' + B'
$$
be the decomposition into mobile and fixed part of $H' = \eta^* H$.
Let $l' \subset X'$ be the strict transform of $l$. Since the curves $l'$ dominate $X'$ we have
$$
M' \cdot l' \leq H' \cdot l' = H \cdot l \leq 1.
$$
Let now $\holom{\mu}{Y}{X'}$ be the modification from Lemma \ref{lemma-indeterminacies} and
$$
\mu^* M' \sim_\Q M_Y + \sum_{i=1}^k a_i E_i
$$
be the decomposition given by the lemma. The morphism $\varphi_{|M_Y|}$ associated with the linear system $\vert M_Y \vert$ 
coincides with $f$ on a Zariski open set. 
Let $l_Y \subset Y$ be the strict transform of $l'$, then
$$
1 \geq M' \cdot l' =  \mu^* M' \cdot l_Y = (M_Y + \sum_{i=1}^k a_i E_i) \cdot l_Y.
$$
Since $l_Y$ is not contained in any exceptional divisor, 
 $$(\sum_{i=1}^k a_i E_i) \cdot l_Y \geq 0,$$ hence $M_Y \cdot l_Y \leq 1$. Yet $M_Y$ is a nef {\em Cartier} divisor, so there are  only
 two cases:  either $M_Y \cdot l_Y=0$ and $l_Y$ is contracted by $\varphi_{|M_Y|}$,
or $M_Y \cdot l_Y=1$. 
In the former case $l$ is contracted by $f$, which is our claim. 
Thus it remains to derive a contradiction assuming that $M_Y \cdot l_Y=1$.
Then all the inequalities above are equalities, in particular
we have $H \cdot l=1$.

As mentioned at the beginning of the proof, this implies that the degenerate line $l$ passes through both $x$ and $y$.
Consider the surjective map
$$
\holom{\varphi_{|M_Y|}}{Y}{Z}.
$$
Since $x \in X$ is general, $x$ can be considered as a point on $Y$ and its image $\varphi_{|M_Y|}(x)$
is in the smooth locus of $Z$. Further,  the irreducible curves $\varphi_{|M_Y|}(l_{Y})$ passing through $\varphi_{|M_Y|}(x)$ dominate $Z$. Writing $M_Y = \varphi_{|M_Y|}^*\sO_Z(1)$, it follows 
$$
1 = M \cdot l_{Y} = c_1(\sO_Z(1)) \cdot (\varphi_{|M_Y|})_*(l_{Y}).
$$
Since $\sO_Z(1)$ is ample, 
$$c_1(\sO_Z(1))^{\dim Z} \leq 1$$
 by \cite[V,Prop.2.9]{Kol96}.
 On the other hand 
$$h^0(Z, \sO_Z(1))=h^0(Y, \sO_Y(M)) = h^0(X, \sO_X(D)) \geq n+1.$$ Since $\sO_Z(1)$ is Cartier, the non-negativity of the sectional genus \cite[Thm.4.2]{Fuj90}
$$
\Delta(Z, \sO_Z(1)) := \dim Z + c_1(\sO_Z(1))^{\dim Z} - h^0(Z, \sO_Z(1))
$$
yields $\dim Z=n$. Since $f$ is not generically finite, this is a contradiction.
\end{proof}

\begin{remark} \label{remark-degree-one}
Following the strategy of proof \cite[4.4]{BBI00},
\cite[Thm.4.2]{IN03}, we can show a variant of Theorem \ref{theorem-euler-fields}:
let $X$ be a klt degeneration of $\PP^n$, and let $l$ be a degenerate line on $X$.
If $H \cdot l=1$, then $X \simeq \PP^n$.
\end{remark}

The next result shows  characterises Example \ref{ex:basic}  via the rational fibration:

\begin{theorem} \label{theorem-case-nminusone}
Let $X$ be a canonical degeneration of $\PP^n$ and
$$
\merom{f}{X}{\PP^N}
$$
be the rational morphism defined by the linear system $|\sO_{X}(1)|$.
Denote by $Z \subset \PP^N$ the closure of the image of $f$, and let $d$ be its degree. 

If $\dim Z = n-1$, then $2 \leq d \leq \frac{n+1}{2}$.
\end{theorem}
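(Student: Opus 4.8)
The plan is to resolve the map $f$ exactly as in the proof of Theorem \ref{thm:delta-0}. First I would take a $\Q$-factorialisation $\eta\colon X' \to X$, decompose $H' = \eta^* H \simeq M' + B'$ into mobile and fixed part, and apply Lemma \ref{lemma-indeterminacies} to $M'$ to obtain $\mu\colon Y \to X'$ with $\mu^* M' \sim_\Q M_Y + \sum_i a_i E_i$, where $M_Y$ is globally generated Cartier and $\varphi_{|M_Y|}\colon Y \to Z \subset \PP^N$ resolves $f$. Writing $A := \mu^* \eta^* H$, a nef and big divisor with $A^n = H^n = 1$, we have $A \sim_\Q M_Y + B_Y$ with $B_Y := \sum_i a_i E_i + \mu^* B' \geq 0$ effective. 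Since $\varphi_{|M_Y|}$ is given by the complete linear system, $Z$ is nondegenerate in $\PP^N$ with $N+1 = h^0(Y, \sO_Y(M_Y)) = h^0(X, \sO_X(H)) \geq n+1$, so $N \geq n$.

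For the lower bound, since $Z$ is an irreducible nondegenerate variety of dimension $n-1$ in $\PP^N$ with $N \geq n$, the classical minimal-degree inequality $\deg Z \geq \codim_{\PP^N} Z + 1 = N - n + 2 \geq 2$ gives $d \geq 2$; equivalently, if $d = 1$ then $Z$ would be a linear space, contradicting nondegeneracy together with $N \geq n$.

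For the upper bound I would first identify the general fibre of $f$. Because $\dim Z = n-1$, the general fibre $C$ is a curve, and by Theorem \ref{theorem-euler-fields} every degenerate line is contracted by $f$; since the degenerate lines dominate $X$, the general fibre through a general point contains such a line and, being irreducible of dimension one, coincides with it. Hence $C$ is a rational curve moving in a family covering $X$. Two intersection estimates then finish the argument. First, since $\dim Z = n-1$ we have $M_Y^{n-1} \equiv d\,[C_Y]$ where $C_Y$ is the strict transform of $C$ (a general $\varphi$-fibre), so using the nef chain $A^n \geq A^{n-1} M_Y \geq \cdots \geq A \cdot M_Y^{n-1}$ coming from $A - M_Y = B_Y \geq 0$, we obtain
$$
d\,(H \cdot C) = A \cdot M_Y^{n-1} \leq A^n = 1.
$$
Second, $-K_X \cdot C \geq 2$: passing to a resolution $\rho\colon \tilde X \to X$, the strict transform $\tilde C$ is a free rational curve (a general member of a covering family on a smooth variety), hence $-K_{\tilde X} \cdot \tilde C \geq 2$; and because $X$ is \emph{canonical} we have $K_{\tilde X} = \rho^* K_X + \sum_i b_i \tilde E_i$ with $b_i \geq 0$, so $-K_X \cdot C = -K_{\tilde X}\cdot \tilde C + \sum_i b_i (\tilde E_i \cdot \tilde C) \geq -K_{\tilde X}\cdot \tilde C \geq 2$. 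Since $-K_X \simeq (n+1)H$, this reads $H \cdot C \geq \tfrac{2}{n+1}$, and combining with $d\,(H\cdot C) \leq 1$ yields $d \leq \tfrac{n+1}{2}$.

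The main obstacle is the second intersection estimate. Verifying that the general fibre is genuinely a single rational curve (not a reducible or non-reduced one-dimensional cycle) relies essentially on Theorem \ref{theorem-euler-fields}, and the inequality $-K_X \cdot C \geq 2$ is exactly where the canonical hypothesis enters: for a merely klt $X$ the discrepancies $b_i$ could be negative and the transfer of the freeness bound from $\tilde X$ to $X$ would fail. This is consistent with Example \ref{ex:basic}, where the general fibre is the ruling $r$ with $-K_X \cdot r = \tfrac{n+1}{d}$, so that $-K_X \cdot r \geq 2$ holds precisely when $d \leq \tfrac{n+1}{2}$, i.e. precisely when the cone is canonical; this also shows the bound is optimal.
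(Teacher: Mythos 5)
Your proposal is correct and its skeleton is the same as the paper's: the same $\Q$-factorialisation plus Lemma \ref{lemma-indeterminacies} setup, essentially the same nondegeneracy argument for $d \geq 2$, and the same chain of nef inequalities $A^n \geq A^{n-1}\cdot M_Y \geq \dots \geq A \cdot M_Y^{n-1}$ leading to $d\,(H \cdot C) \leq 1$; combining this with $-K_X \cdot C \geq 2$ is also exactly the paper's final step. The one genuinely different ingredient is how you obtain $-K_X \cdot C \geq 2$. The paper stays on the model $Y$: it observes that the strict transform $l_Y$ of a degenerate line is a connected component of a general $\varphi$-fibre lying in the smooth locus of $Y$, so adjunction (trivial normal bundle) gives the exact equality $K_Y \cdot l_Y = -2$, which is then transferred to $X$ via $K_Y \sim_\Q \mu^* \eta^* K_X + \sum_i b_i E_i$ with $b_i \geq 0$. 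You instead pass to a smooth resolution and invoke freeness of a general member of a dominating family of rational curves to get $-K_{\tilde X}\cdot \tilde C \geq 2$, then use the canonical discrepancies. Both routes hinge on $b_i \geq 0$ in the same way, and your sanity check against Example \ref{ex:basic} is the right one; but note that the paper's exact equality $K_Y \cdot l_Y = -2$ is not a luxury, since the analysis of the equality case is what drives Corollary \ref{corollary-nminusone} afterwards.

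One step of your write-up is not justified: the assertion that the general $f$-fibre is irreducible and therefore \emph{coincides} with the degenerate line. The Stein factorisation of $\varphi_{|M_Y|}$ may a priori be nontrivial (the paper explicitly carries along the number $e$ of connected components of a general fibre), and even the connected component through a general point could a priori be strictly larger than $l_Y$; Theorem \ref{theorem-euler-fields} only gives containment. Fortunately this does not damage your argument, because both of your estimates survive with containment alone: since $A = \mu^*\eta^* H$ is nef and the general fibre cycle contains $l_Y$ with coefficient at least one, one gets
$$
1 = A^n \geq A \cdot M_Y^{n-1} = d\, A \cdot [\text{general fibre}] \geq d\, A \cdot l_Y = d\,(H \cdot l),
$$
and your freeness bound $-K_X \cdot l \geq 2$ applies to the degenerate line $l$ directly, as a general member of a dominating family of rational curves, with no reference to fibres at all. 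So the proof should simply be phrased in terms of the degenerate line $l$ throughout, rather than in terms of ``the general fibre''; with that rewording it is complete.
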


\begin{proof}
Following Notation \ref{notation-Qfactorial}, 
let $\holom{\eta}{X'}{X}$ be a $\Q$-factorialisation of $X$ and
$$
H' \simeq M' + B'
$$
be the decomposition into mobile and fixed part of $H' = \eta^* H$.

Applying Lemma \ref{lemma-indeterminacies} to $M'$ we obtain a birational morphism
$\holom{\mu}{Y}{X}$ and a decomposition 
$$
\mu^* M' \sim_\Q M_Y + \sum_{i=1}^k a_i E_i
$$
such that $a_i > 0$ and 
$$
\holom{\varphi:=\varphi_{|M_Y|}}{Y}{Z}
$$
resolves the indeterminacies of $f$.
Note that $d \geq 2$ since otherwise $\varphi(Y) \subsetneq \PP^N$ is linearly degenerate
and therefore the map $H^0(\PP^N, \sO_{\PP^N}(1)) \rightarrow H^0(X, \sO_X(1))$ not injective.

Our goal is to show that 
$$
d= c_1(\sO_Z(1))^{n-1} \leq \frac{n+1}{2}.
$$
Observe first that
$$
\mu^* H' \sim_\Q M_Y + \sum_{i=1}^k a_i E_i + \mu^* B',
$$
so we have $\mu^* H' \sim_\Q M_Y + A$ with $A:=\sum_{i=1}^k a_i E_i + \mu^* B'$ an effective $\Q$-divisor. Since $\mu^* H'$ and $M_Y$ are nef, 
we argue inductively to obtain inequalities of cycles
\begin{equation}
\label{eqn1}
(\mu^* H')^{n-1} \geq (\mu^* H')^{n-2} \cdot M_Y \geq \ldots \geq M_Y^{n-1}.
\end{equation}

Now recall that $M_Y = \varphi^*\sO_Z(1)$ and that $f$ contracts the degenerate lines $l \subset X$
by Theorem \ref{theorem-euler-fields}.
Thus if $l_Y \subset Y$ is the strict transform
and $e$ the number of connected components of a general $\varphi$-fibre, we have 
$$
M_Y^{n-1} = d \cdot e \ l_Y.
$$
Hence by \eqref{eqn1}
\begin{equation} \label{upperbound}
H \cdot (\eta \circ \mu)_* l_Y =
(\mu^* H') \cdot l_Y
=
\frac{1}{d \cdot e} (\mu^* H') \cdot M_Y^{n-1} \leq \frac{1}{d \cdot e} (\mu^* H')^n = \frac{1}{d \cdot e} \leq \frac{1}{d}. 
\end{equation}
On the other hand, since $X$ has canonical singularities we have
$$
K_Y \sim_\Q \mu^* \eta^* K_X + \sum_{i=1}^k b_i E_i
$$
such that $b_i \geq 0$. 
Since $Y$ is normal, $Y$ is smooth in a neighbourhood of the general curve $l_Y$ which is a connected component of a fiber of $\varphi$.  In particular, the intersection numbers $K_Y \cdot l_Y$ and $E_i \cdot l_Y$ are well-defined and
$$
K_Y \cdot l_Y=-2, \qquad E_i \cdot l_Y \geq 0 \qquad \forall \ i=1, \ldots, k
$$
Therefore
\begin{equation}
\label{eqn2}
-2 = K_Y \cdot l_Y + \sum_{i=1}^k b_i E_i \cdot l_Y \geq \mu^* \eta^* K_X \cdot l_Y, 
\end{equation}
Combining with \eqref{upperbound} we thus obtain
\begin{equation}
\label{eqn3}
2 \leq -K_X \cdot (\eta \circ \mu)_* l_Y = (n+1) H \cdot (\eta \circ \mu)_* l_Y \leq \frac{n+1}{d},
\end{equation}
which proves our assertion. 
\end{proof}

\begin{corollary} \label{corollary-nminusone}
In the situation of Theorem \ref{theorem-case-nminusone}, assume that
$$
d=\frac{n+1}{2}.
$$
Then the  linear system $|\sO_X(1)|$ is without fixed components.
Moreover the following holds: 
\begin{enumerate}
\item the modification $\holom{\mu}{Y}{X}$ is crepant;
\item for every $\mu$-exceptional divisor $E_i$, the image $\mu(E_i)$ is a point
and $\varphi|_{E_i} : E_i \rightarrow Z$ is finite;
\item the morphism $\varphi$ has connected fibres and is equidimensional;
\item the anti-canonicaldivisor $-K_Y$ is $\varphi$-ample.
\end{enumerate}
\end{corollary}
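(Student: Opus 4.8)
The strategy is to reverse-engineer the proof of Theorem \ref{theorem-case-nminusone}: the hypothesis $d = \frac{n+1}{2}$ saturates \eqref{eqn3}, so every inequality in \eqref{eqn1}, \eqref{upperbound}, \eqref{eqn2} and \eqref{eqn3} must be an equality, and each of these equalities yields one piece of the statement. I would extract the structural information step by step, starting from the cheap consequences and pushing toward the fibration structure.

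First I would record the three immediate consequences. Writing $A = \sum_i a_iE_i + \mu^*B'$, the difference $(\mu^*H')^n - (\mu^*H')\cdot M_Y^{n-1}$ is a sum of nonnegative terms $(\mu^*H')^{n-1-k}M_Y^{k}\cdot A$ $(0 \leq k \leq n-2)$; equality in \eqref{upperbound} forces each of them to vanish. The $k=0$ term gives $(\mu^*H')^{n-1}\cdot A = 0$, hence $(H')^{n-1}\cdot B' = 0$, and since every prime divisor on $X'$ meets $(H')^{n-1}$ positively (Remark \ref{remark-rather-big}) we conclude $B' = 0$, i.e. $|\sO_X(1)|$ has no fixed part. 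The equality $\frac{1}{de} = \frac1d$ forces $e = 1$, so the general $\varphi$-fibre is connected (half of (c)); being contained in the smooth locus of $Y$ and satisfying $-K_Y\cdot l_Y = 2$, adjunction shows it is a $\PP^1$. Finally, equality in \eqref{eqn2} and \eqref{eqn3} gives $\mu^*\eta^*K_X\cdot l_Y = K_Y\cdot l_Y = -2$ and hence $\sum_i b_i\,(E_i\cdot l_Y) = 0$ along the general degenerate line.

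The heart of the argument is to understand $\varphi$ and the divisors $E_i$. Since $B = 0$, we have $\mu^*H' = M_Y + \sum_i a_iE_i$, so on the general fibre $\sum_i a_i\,(E_i\cdot l_Y) = \mu^*H'\cdot l_Y = \frac1d > 0$; thus some $E_i$ meets, and therefore dominates, $Z$. The claim I would prove is that $\varphi$ is equidimensional and that every $E_i$ is finite over $Z$ and contracted by $\mu$ to a point. The main tool is the full list of vanishing intersection numbers $(\mu^*H')^{n-1-k}M_Y^{k}\cdot E_i = 0$ extracted above, together with the fact that $\mu^*H' = \mu^*\eta^*H$ is the pullback of an ample class of volume one: a positive-dimensional component $V$ of a $\varphi$-fibre, an exceptional divisor not dominating $Z$, or an $E_i$ with $\dim\mu(E_i)\ge 1$ would each produce curves on which $\varphi^*\sO_Z(1)$ and $\mu^*\eta^*H$ vanish simultaneously, i.e. curves contracted by \emph{both} $\varphi$ and $\eta\circ\mu$. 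Ruling these out amounts to showing that $(\varphi, \eta\circ\mu)\colon Y \to Z\times X$ contracts no curve, and reconciling this with $\rho(X)=1$; this is where the real work lies and is the principal obstacle. Granting it, (b) and the equidimensionality of $\varphi$ (the rest of (c)) follow, and since then $E_i\cdot l_Y > 0$ for every $i$, the relation $\sum_i b_i\,(E_i\cdot l_Y) = 0$ with $b_i \ge 0$ forces all $b_i = 0$, which is crepancy (a).

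With (a) in hand, $-K_Y = (n+1)\mu^*H'$ is nef and restricts to $\sO(2)$ on each fibre $\PP^1$. Since no $\varphi$-contracted curve is $\mu^*H'$-trivial, the divisor $-K_Y$ has strictly positive degree on every curve in every fibre, so $-K_Y$ is $\varphi$-ample, which is (d).
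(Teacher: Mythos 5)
Your first paragraph and your derivations of (a) from (b) and of (d) from (a) and (c) are correct and coincide with the paper's own argument: the telescoping identity
$$
(\mu^* H')^n - \mu^* H' \cdot M_Y^{n-1} \;=\; \sum_{j=0}^{n-2} (\mu^* H')^{n-1-j}\cdot M_Y^{j} \cdot A,
\qquad A=\sum_{i=1}^k a_i E_i + \mu^* B',
$$
equality in \eqref{upperbound}, the projection formula and Remark \ref{remark-rather-big} give $B'=0$; equality $\frac{1}{de}=\frac{1}{d}$ gives $e=1$; and $E_i \cdot l_Y>0$ together with $\sum_i b_i (E_i\cdot l_Y)=0$ gives crepancy once (b) is known. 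The genuine gap is exactly the point you flag yourself as ``the principal obstacle'': statement (b) and the equidimensionality half of (c) are never proved, and they are the mathematical core of the corollary --- everything else is bookkeeping around them. A proof that openly defers its central claim is not a proof.

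Moreover, the reduction you propose in order to fill this gap is not the right statement. Once $B=0$, Lemma \ref{lemma-indeterminacies} applies directly to $H$ on $X$, so one may take $\eta=\id$; and $M_Y$ is $\mu$-ample by that lemma, so a $\mu$-contracted curve is never $\varphi$-contracted. Hence your criterion ``$(\varphi,\eta\circ\mu)\colon Y \to Z\times X$ contracts no curve'' holds automatically and carries no information, while the genuinely bad configurations are invisible to it: an exceptional divisor $E_1$ with $p:=\dim\mu(E_1)\geq 1$, or a $\varphi$-fibre component $V$ with $\dim V\geq 2$ on which $\mu$ is finite, produce no curve contracted by both maps. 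The paper excludes these configurations by positivity arguments layered on top of your vanishing list, not by a common-contracted-curve criterion: for (b), if $p>0$, the vanishing of the term $(\mu^* H)^{p}\cdot M_Y^{n-1-p}\cdot\sum_{i=1}^k a_iE_i$ is contradicted, because ampleness of $H$ on $X$ (this is why dropping $\eta$ matters) makes $(\mu^* H)^p\cdot E_1$ a positive multiple of a general fibre of $E_1\to\mu(E_1)$, on which the $\mu$-ample divisor $M_Y$ has positive top self-intersection; then $M_Y|_{E_i}$ is ample, so $\varphi|_{E_i}$ is finite. For (c), once each $F\cap E_i$ is finite by (b), the map $F\to\mu(F)$ is finite, so $\mu^* H|_F\sim_\Q \sum_{i=1}^k a_i E_i|_F$ is ample yet supported on finitely many points, which forces $\dim F=1$. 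These two positivity inputs ($\mu$-ampleness of $M_Y$ and ampleness of $H$) are precisely what converts your list of vanishing intersection numbers into (b) and (c), and they are absent from your proposal.
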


\begin{proof} In case $d=\frac{n+1}{2}$, we have equality 
in all the inequalities in the proof
of Theorem \ref{theorem-case-nminusone}; and we spell this out: 

If the fixed part $B$ of $|\sO_X(1)|$ is not empty, we have $H' = M' + B'$ with $B' \neq 0$
and therefore 
$$
\mu^* (H')^{n-1} \cdot A \geq (H')^{n-1} \cdot B' >0
$$
by Remark \ref{remark-rather-big}.
Thus
$(\mu^* H')^n > (\mu^* H')^{n-1} M_Y$ and therefore 
$(\mu^* H')^n>(\mu^* H') \cdot M_Y^{n-1}$ by \eqref{eqn1}. 
Yet this contradicts equality in \eqref{upperbound}, so we get $B=0$.

Note that now the proof of  Theorem \ref{theorem-case-nminusone} holds without changes for $X' = X$ and $M' = H$. 

Since $B=0$ we can apply Lemma \ref{lemma-indeterminacies} to the mobile divisor $H$
on $X$, the proof of Theorem \ref{theorem-case-nminusone} hold without taking a $\Q$-factorialisation $\eta$. In particular the $\mu$-exceptional locus coincides with
$\supp(\sum_{i=1}^k a_i E_i)$.

 As already mentioned, all the inequalities in the previous proof are equalities, in particular 
\eqref{eqn3} becomes $$H \cdot \mu_* l_Y = \frac{2}{n+1},$$
and from \eqref{eqn2} we derive that for every exceptional divisor $E_i$ we have $E_i \cdot l_Y=0$
or $b_i=0$. From the equality in \eqref{upperbound} it follows $e=1$ (so $\varphi$ has connected fibres) and
$$
M_Y^{n-1} = \frac{n+1}{2} l_Y.
$$
Hence 
$$
\mu^* H \cdot M_Y^{n-1} = 1.
$$
We claim that $\mu(E_i)$ is a point for every $\mu$-exceptional divisor $E_i$.

{\em Proof of the claim.}
Arguing by contradiction, let $p>0$ be the maximal dimension of an irreducible component of $\mu(\supp(\sum_{i=1}^k a_i E_i))$. Up to renumbering we may assume $\dim \mu(E_1)=p$. Since 
$H$ and $M_Y$ are nef, we have
$$
(\mu^* H)^{n} \geq \mu^* H \cdot M_Y^{n-1} + M_Y^{n-1-p} \cdot (\mu^* H)^{p} \cdot (\sum_{i=1}^k a_i E_i).
$$
Since $H$ is ample, the 
class of $(\mu^* H)^{p} \cdot E_i$ is a positive multiple of a general fibre of
$E_1 \rightarrow \mu(E_1)$. Since $M_Y$ is $\mu$-ample by Lemma \ref{lemma-indeterminacies},
we obtain 
$$
M_Y^{n-1-p} \cdot (\mu^* H)^{p} \cdot (\sum_{i=1}^k a_i E_i) \geq
a_1 M_Y^{n-1-p} \cdot (\mu^* H)^{p} \cdot E_1 
>0,
$$
a contradiction to $H^n=1=(\mu^* H) \cdot M_Y^{n-1}$. This shows the claim.

Now that we know that $\mu(E_i)$ is a point, by the $\mu-$ampleness of $M_Y$, the restriction
$M_Y|_{E_i} \simeq \varphi^* \sO_Z(1)|_{E_i}$ is ample. Thus $\varphi|_{E_i}$ is finite,
 concluding the proof of b). Since $E_i \twoheadrightarrow Z$ we have $E_i \cdot l_Y>0$
and therefore $b_i=0$ for every exceptional divisor. Thus we have shown a).

Let us next show c), i.e.,  $\varphi$ is equidimensional. If $F$ is any $\varphi$-fibre,
the intersection 
$$
F \cap \mbox{Exc}(\mu) = F \cap \supp(\sum_{i=1}^k a_i E_i)
$$
is finite. Thus $F \rightarrow \mu(F)$ is finite, so $\mu^* H$ is $\varphi$-ample. Since
$$
\mu^* H|_F \sim_\Q (M_Y + \sum_{i=1}^k a_i E_i)|_F = \sum_{i=1}^k a_i ({E_i}_{\vert F})
$$
is ample, and since the sets $E_i \cap F$ are finite by $b)$, the fibre $F$ has pure dimension one.

Finally note that by a) we have 
$$
-K_Y \sim_\Q \mu^*(-K_X) \sim_\Q (n+1) \mu^* H.
$$
We have just shown that $\mu^* H$ is $\varphi$-ample, so $d)$ follows.
\end{proof}

We will use the following variant of Corollary \ref{corollary-nminusone},
the proof works without any changes:

\begin{lemma} \label{lemma-variant}
Let $X$ be a canonical degeneration of $\PP^n$.
For some $k \in \N$, let $\merom{g}{X}{\PP^N}$
be the rational morphism defined by the linear system $|\sO_{X}(k)|$
and denote by $T \subset \PP^N$ the closure of the image of $g$.

Assume that $\dim T = n-1$ and a general $g$-fibre contains a curve $C$ such that $H \cdot C = \frac{2}{n+1}$. Then we have
$$
2 \leq \deg T \leq \frac{k^{n-1} (n+1)}{2}
$$
and if $\deg T = \frac{k^{n-1} (n+1)}{2}$, we have the properties
from Corollary \ref{corollary-nminusone}.
\end{lemma}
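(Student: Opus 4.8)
The plan is to run the proofs of Theorem \ref{theorem-case-nminusone} and Corollary \ref{corollary-nminusone} almost verbatim, replacing $H$ throughout by the data of $\sO_X(k)=\sO_X(kH)$ and replacing the degenerate line $l$ by the given curve $C$. First I would pass to a $\Q$-factorialisation $\holom{\eta}{X'}{X}$ as in Notation \ref{notation-Qfactorial}, set $H'=\eta^* H$, and take the mobile-fixed decomposition $kH' \sim_\Q M'+B'$ of the linear system $|kH'|$. Applying Lemma \ref{lemma-indeterminacies} to $M'$ yields a birational morphism $\holom{\mu}{Y}{X'}$ together with a decomposition $\mu^* M' \sim_\Q M_Y + \sum_i a_i E_i$ (with $a_i>0$ and the $E_i$ exceptional) such that $M_Y$ is globally generated and $\holom{\varphi:=\varphi_{|M_Y|}}{Y}{T}$ resolves the indeterminacies of $g$; in particular $M_Y=\varphi^*\sO_T(1)$. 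The lower bound $\deg T \geq 2$ is obtained exactly as in Theorem \ref{theorem-case-nminusone}: since $h^0(X,\sO_X(k)) \geq h^0(X,\sO_X(1)) \geq n+1$, a linear $T$ of dimension $n-1$ would be degenerate in $\PP^N$, contradicting the injectivity of $H^0(\PP^N,\sO(1)) \rightarrow H^0(X,\sO_X(k))$ coming from completeness of $|\sO_X(k)|$.

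For the upper bound I would write $\mu^*(kH') \sim_\Q M_Y + A$ with $A:=\sum_i a_i E_i + \mu^* B'$ effective. As $\mu^*(kH')$ and $M_Y$ are both nef, the inductive chain of \eqref{eqn1} gives
$$
k^n = (\mu^*(kH'))^n \geq (\mu^*(kH')) \cdot M_Y^{n-1} = k\,(\mu^* H')\cdot M_Y^{n-1},
$$
hence $(\mu^* H')\cdot M_Y^{n-1} \leq k^{n-1}$, using $(H')^n=H^n=1$. Since $\dim T=n-1$ we have $M_Y^{n-1}=\deg T \cdot [F]$ for the class $[F]$ of a general $\varphi$-fibre. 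Because this fibre contains the strict transform $C_Y$ of $C$ and $H$ is ample, $[F]-[C_Y]$ is an effective class of nonnegative $\mu^* H'$-degree, so by the projection formula
$$
(\mu^* H')\cdot [F] \geq (\mu^* H') \cdot [C_Y] = H\cdot C = \frac{2}{n+1}.
$$
Combining the two displays yields $\deg T \cdot \frac{2}{n+1} \leq (\mu^* H')\cdot M_Y^{n-1} \leq k^{n-1}$, i.e.\ $\deg T \leq \frac{k^{n-1}(n+1)}{2}$. Note that this direction does not use the canonical hypothesis: it only uses the nef chain and the assumed value $H\cdot C=\tfrac{2}{n+1}$.

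Finally, if $\deg T=\frac{k^{n-1}(n+1)}{2}$ then every inequality above is an equality, and I would transcribe the proof of Corollary \ref{corollary-nminusone} with $C$ playing the role of the degenerate line. Equality of the analogue of \eqref{upperbound} forces $B'=0$, so $|\sO_X(k)|$ has no fixed component and one may drop the $\Q$-factorialisation; equality of $(\mu^*(kH'))\cdot [F]$ with $H\cdot C$ shows that, up to $\mu$-contracted curves, the general fibre is $C_Y$. Since $X$ is canonical we write $K_Y \sim_\Q \mu^*\eta^* K_X + \sum_i b_i E_i$ with $b_i \geq 0$, and the identity $-K_X\cdot C=(n+1)H\cdot C=2$ now plays the role of $K_Y\cdot l_Y=-2$, yielding exactly as in Corollary \ref{corollary-nminusone} that $\mu$ is crepant, that each $\mu(E_i)$ is a point with $\varphi|_{E_i}$ finite, that $\varphi$ is equidimensional with connected fibres, and that $-K_Y$ is $\varphi$-ample. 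The main obstacle is this last step: one must check that the numerical equalities genuinely propagate to the geometric conclusions — in particular that $C_Y$ may be taken to be a connected component of the general fibre behaving like $l_Y$, so that the discrepancy computation forcing $b_i=0$ goes through, and that the identity $M_Y^{n-1}=\deg T \cdot [F]$ is interpreted correctly when the general fibre is reducible or non-reduced.
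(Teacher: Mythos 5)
Your proposal is correct and is essentially the paper's own proof: the paper establishes this lemma simply by asserting that the arguments of Theorem \ref{theorem-case-nminusone} and Corollary \ref{corollary-nminusone} go through without change with $kH$ in place of $H$ and the curve $C$ in place of the degenerate line $l$, which is exactly your transcription — including your correct observation that the hypothesis $H \cdot C = \frac{2}{n+1}$ substitutes for the canonical-singularities discrepancy estimate in the proof of the upper bound. The one obstacle you flag at the end — that in the equality case the strict transform $C_Y$ must be a smooth rational connected component of the general $\varphi$-fibre, so that $K_Y \cdot C_Y = -2$ holds and the computation forcing $b_i = 0$ (hence crepancy and $\varphi$-ampleness of $-K_Y$) applies — is precisely the point the paper also leaves implicit, and it is harmless in the paper's application, where $C$ is a conic in a del Pezzo surface.
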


\begin{corollary} \label{corollary-nminusone-terminal}
Let $X$ be a terminal degeneration of $\PP^n$.
Let 
$$
\merom{f}{X}{\PP^N}
$$
be the rational morphism defined by the linear system $|\sO_{X}(1)|$.
Denote by $Z \subset \PP^N$ the closure of the image of $f$, and let $d$ be its degree. 

If $\dim Z = n-1$, we have $2 \leq d < \frac{n+1}{2}$.
\end{corollary}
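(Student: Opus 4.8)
The plan is to reduce to the boundary case of Theorem \ref{theorem-case-nminusone} and then play the crepancy conclusion of Corollary \ref{corollary-nminusone} against the definition of terminal singularities. Since a terminal variety is in particular canonical, Theorem \ref{theorem-case-nminusone} applies verbatim and already gives the chain $2 \leq d \leq \frac{n+1}{2}$. Thus everything reduces to excluding the extremal value $d = \frac{n+1}{2}$, and I would argue by contradiction, assuming it holds.

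Under this assumption Corollary \ref{corollary-nminusone} becomes available: the linear system $|\sO_X(1)|$ has no fixed part, so Lemma \ref{lemma-indeterminacies} applies directly to $H$ and produces a birational morphism $\holom{\mu}{Y}{X}$ from a normal variety with $\mu^* H \sim_\Q M_Y + \sum_{i=1}^k a_i E_i$, where $a_i > 0$ and $\mbox{Exc}(\mu) = \Supp(\sum_{i=1}^k a_i E_i)$. The next step is to observe that $\mu$ cannot be an isomorphism. Indeed, since $\dim Z = n-1 < n$, the variety $X$ is not isomorphic to $\PP^n$ (were $X \simeq \PP^n$, the map $f = \varphi_{|H|}$ would be an isomorphism onto $Z = \PP^n$ and $\dim Z$ would equal $n$). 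By the contrapositive of Proposition \ref{prop:easy} the reflexive sheaf $\sO_X(1)$ is then not locally free, so $H$ is not Cartier; but if $\mu$ were an isomorphism, Lemma \ref{lemma-indeterminacies} would give $H \simeq M_Y$ Cartier, a contradiction. Hence $\mu$ contracts at least one divisor, say $E_1$.

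The final step, which is the crux, is the incompatibility of crepancy with terminality. By Corollary \ref{corollary-nminusone}(a) the modification $\mu$ is crepant, i.e.
$$
K_Y \sim_\Q \mu^* K_X,
$$
so the discrepancy of the exceptional divisor $E_1$ over $X$ is zero. This contradicts the hypothesis that $X$ has terminal singularities, for which every exceptional divisor has strictly positive discrepancy. Therefore the case $d = \frac{n+1}{2}$ is impossible, and combining with Theorem \ref{theorem-case-nminusone} we conclude $2 \leq d < \frac{n+1}{2}$.

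I do not expect a serious technical obstacle once Theorem \ref{theorem-case-nminusone} and Corollary \ref{corollary-nminusone} are in hand: the statement is essentially a short deduction, since terminal singularities simply do not admit divisorial crepant modifications. The only point requiring a moment of care is verifying that $\mu$ genuinely contracts a divisor — otherwise Corollary \ref{corollary-nminusone}(a),(b) would be vacuous — which is exactly what the argument via Proposition \ref{prop:easy} (equivalently, via $H^n = 1$ versus $\dim Z < n$) supplies.
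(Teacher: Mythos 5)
Your proof is correct and follows essentially the same route as the paper: apply Theorem \ref{theorem-case-nminusone} to get $d \leq \frac{n+1}{2}$, then in the boundary case invoke Corollary \ref{corollary-nminusone} to obtain that $|\sO_X(1)|$ has no fixed part and that $\mu$ is crepant with nonempty divisorial exceptional locus, which contradicts terminality. The only (harmless) difference is how you rule out $\mu$ being an isomorphism: the paper notes that $\rho(X)=1$ forces $f$ to have nonempty indeterminacy locus (a genuine morphism onto the $(n-1)$-dimensional $Z$ would yield a nontrivial nef but non-big class), whereas you argue via the contrapositive of Proposition \ref{prop:easy} that $H$ cannot be Cartier; both are valid one-line justifications.
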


\begin{proof}
By Theorem \ref{theorem-case-nminusone} we already know that $d \leq \frac{n+1}{2}$, so we only have to exclude the case of equality. In this case by Corollary \ref{corollary-nminusone} the linear system
 $|\sO_X(1)|$ has no fixed component. Since $\rho(X)=1$, the fibration $f$ is not a morphism, so the birational morphism $\mu$ is not an isomorphism. Since $\mu$ is crepant
by Corollary \ref{corollary-nminusone}, this contradicts the terminality of $X$.
\end{proof}

In dimension two,  these general considerations already give the complete picture, which of course follows also from the main theorem in \cite{Man91}: 

\begin{corollary} \label{corollary-surfaces}
Let $X$ be a canonical degeneration of $\PP^2$. Then we have $X \simeq \PP^2$.
\end{corollary}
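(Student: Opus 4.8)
The plan is to combine the two cases already handled in this section --- generic finiteness (Corollary \ref{cor:gen-finite}) and the fibration case $\dim Z = n-1$ (Theorem \ref{theorem-case-nminusone}) --- and to observe that for $n=2$ the latter is numerically vacuous. Let $H$ be the Weil $\Q$-Cartier divisor from Setup \ref{setup-degen}, so that $H^2 = 1$ and $h^0(X, \sO_X(H)) \ge 3$, and let $\merom{f}{X}{\PP^N}$ be the rational map defined by $|\sO_X(1)|$, with $Z \subset \PP^N$ the closure of its image. First I would reduce to a trichotomy on $\dim Z \in \{0,1,2\}$ and dispose of the two outer values.

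If $f$ is generically finite onto its image, in particular when $\dim Z = 2$, then Corollary \ref{cor:gen-finite} gives $X \simeq \PP^2$ directly. The value $\dim Z = 0$ is impossible: writing $H \simeq M + B$ with $B$ the fixed part, multiplication by the fixed section identifies $H^0(X, \sO_X(M)) \cong H^0(X, \sO_X(H))$, so the mobile system still has $h^0 \ge 3$ and $f$ maps $X$ onto a subvariety spanning $\PP^N$ with $N \ge 2$; a point cannot span such a projective space. Hence, if $f$ is not generically finite, the only surviving possibility is $\dim Z = 1 = n-1$.

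In that case I would invoke Theorem \ref{theorem-case-nminusone} with $n = 2$, which asserts
$$
2 \le d \le \frac{n+1}{2} = \frac{3}{2}
$$
for the degree $d = \deg Z$. As $d$ is a positive integer, no such $d$ exists, so $\dim Z = n-1$ cannot occur. Running through the trichotomy, $f$ is therefore generically finite onto its image, and Corollary \ref{cor:gen-finite} yields $X \simeq \PP^2$.

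I do not expect a genuine obstacle here: all the substance is carried by Theorem \ref{theorem-euler-fields} and Theorem \ref{theorem-case-nminusone}, and the corollary is the pleasant coincidence that in dimension two the sharp upper bound $d \le \frac{n+1}{2}$ falls below the lower bound $d \ge 2$ forced by the non-degeneracy of $Z$. The only point demanding a little care is the exclusion of $\dim Z = 0$, which amounts to checking that passing to the mobile part does not drop $h^0$ below $n+1$; this is immediate, since a fixed component lies on every member of the system.
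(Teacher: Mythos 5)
Your proof is correct and takes essentially the same route as the paper: the paper invokes Theorem \ref{theorem-euler-fields} (whose generic-finiteness statement is itself just Corollary \ref{cor:gen-finite}) to force $\dim Z = 1$, and then obtains the same numerical contradiction $2 \leq \deg Z \leq \frac{3}{2}$ from Theorem \ref{theorem-case-nminusone}. Your explicit exclusion of $\dim Z = 0$ via the non-degeneracy of the image in $\PP^N$ with $N \geq 2$ is a point the paper passes over silently, and you resolve it exactly as intended.
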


\begin{proof}
If $X \not\simeq \PP^2$, by Theorem \ref{theorem-euler-fields} the rational fibration $f$ has one-dimensional image $Z$. Thus
Theorem \ref{theorem-case-nminusone} applies, hence $2 \leq \deg Z \leq \frac{3}{2}$, a contradiction.
\end{proof}

\section{Stability} 

In general, a klt deformation $X$ of $\PP^n$ will not have a stable tangent bundle, as shown in Example \ref{ex:basic}, and thus  $X \ne \PP_n$.  In this section we show that instability is the only obstruction.

\begin{proof}[Proof of Theorem \ref{theorem-stability}]
Let us recall that since $\rho(X)=1$ for the klt degeneration $X$ of $\PP^n$ (Proposition \ref{proposition-picard-number}), so stability is always with respect to $H$.
Note also that for a torsion-free sheaf $\sF \subset X$, the intersection number
$c_1(\sF) \cdot H^{n-1}$ (and hence the slope $\mu_H(\sF)$) is well-defined even if $\det \sF$ is not $\Q$-Cartier.

Let $\merom{f}{X}{\PP^N}$ be the rational map defined by $|\sO_X(1)|$. If $f$ is generically finite onto its image, then $X \simeq \PP^n$ by Corollary \ref{cor:gen-finite}.
Thus we will assume that this is not the case and prove that $T_X$ is not semistable.

Let $H = M + B$ be the decomposition into mobile and fixed part of $|H|$.
Following Notation \ref{notation-Qfactorial},
let $\holom{\eta}{X'}{X}$ be a $\Q$-factorialisation
and $H' = M'+B'$ be the induced decomposition into mobile and fixed part. 
Let $\holom{\mu}{Y}{X'}$ be the modification associated to $M'$ given by  Lemma \ref{lemma-indeterminacies},  and let 
$$
\mu^* M'  \sim_\Q M_Y + \sum_{i=1}^k a_i E_i 
$$
be the decomposition provided by the lemma. The Stein factorisation of $\varphi_{|M|}$ defines a fibration 
$$
\holom{\varphi}{Y}{Z}
$$
such that $M_Y \simeq \varphi^* A$ for some ample globally generated Cartier divisor $A$, and by our assumption $m := \dim Z<\dim X$. 

Let now $\holom{\tau}{Z'}{Z}$ be a resolution of singularities,
and let $\holom{\sigma}{Y'}{Y}$ the induced map where $Y'$ is the normalisation of the dominant component of $Y \times_Z Z'$. Then we have an induced fibration $\holom{\varphi'}{Y'}{Z'}$
such that $$\sigma^* L \simeq \varphi'^* A'$$ where $A' := \tau^* A$ is a nef and big divisor.
We summarise the construction in a commutative diagram:
$$
\xymatrix{
Y' \ar[r]^{\sigma} \ar[d]^{\varphi'} & Y \ar[r]^\mu \ar[d]^\varphi & X' \ar[r]^{\eta} & X
\\
Z' \ar[r]^{\tau} & Z & &
}
$$

Consider next the cotangent sequence
$$
0 \rightarrow \varphi'^* \Omega_{Z'} \rightarrow \Omega_{Y'} \rightarrow \Omega_{Y'/Z'} \rightarrow 0 
$$
and take the saturation
$$
0 \rightarrow (\varphi'^* \Omega_{Z'})^{sat} \rightarrow \Omega_{Y'} \rightarrow \Omega_{\sF_Y} \rightarrow 0.
$$
Hence $c_1((\varphi'^* \Omega_{Z'})^{sat}) = \varphi^* K_{Z'} + G$ with $G$ an effective divisor.
Introducing the foliation 
$$ T_{\sF_Y}  := (\Omega_{\sF_Y})^* \subset T_{Y'},$$
we conclude 
$$
c_1(T_{\sF_Y}) = -K_{Y'} + \varphi^* K_{Z'} + G.
$$
The direct image $\eta_* \mu_* \sigma_* T_{\sF_Y} \subset T_{X}$ defines 
a foliation $T_{\sF} \subset T_{X}$ with
$$
c_1(T_{\sF}) = \eta_* \mu_* \sigma_* (-K_{Y'} + \varphi'^* K_{Z'} + G) \geq \eta_* \mu_* \sigma_* (-K_{Y'} + \varphi'^* K_{Z'}).
$$
Since $\eta_* \mu_* \sigma_* K_{Y'} = K_{X}$, we have
\begin{equation}
\label{c1-foliation}
c_1(T_{\sF}) \geq -K_{X} + \eta_* \mu_* \sigma_* \varphi'^* K_{Z'}.
\end{equation}

The quasi-polarised variety $(Z', A')$ is not birationally equivalent
to $(\PP^m, \sO_{\PP^m}(1))$ since 
$$
h^0(Z', \sO_{Z'}(A')) = h^0(Z, \sO_{Z}(A))= h^0(X, \sO_X(1)) \geq n+1 > m+1.
$$
Therefore it follows from Fujita's theory \cite{Fuj90}, \cite[Prop.2.15]{Hoe12} that
$K_{Z'} + m A'$ is pseudoeffective. Hence 
$$\eta_* \mu_* \sigma_* \varphi'^* (K_{Z'} + m A)$$ 
is pseudoeffective as well. 

Recalling that by construction we have $\eta_* \mu_* \sigma_* \varphi'^* A' = M$, it follows by
\eqref{c1-foliation} that 
$$
c_1(T_{\sF}) + m M   \geq -K_{X} + \eta_* \mu_* \sigma_* \varphi'^* (K_{Z'} + m A)
\geq -K_{X}.
$$
Thus we have 
$$
c_1(T_{\sF}) \geq -K_{X} - m M = (n-m+1) H + m B \geq (n-m+1) H.
$$
Thus the slope of the subsheaf $T_{\sF} \subset T_{X}$ with respect to $H$ is at least
$$
\frac{n-m+1}{n-m} H^n = 1 + \frac{1}{n-m}.
$$
But the slope of $T_{X}$ with respect to $H$ is $1 + \frac{1}{n}<1 + \frac{1}{n-m}$.
\end{proof} 

Next we present a related result via the so-called canonical extension. The ample class $c_1(\sO_{X}(1)) \in H^1(X, \Omega_X^{[1]})$ defines a non-split extension
$$ 
0 \to \sO_X \to \sE_X \to T_X \to 0,
$$
called the canonical extension; see \cite{GKP22}, \cite[Sect.4.B]{HP21} for details. 

\begin{theorem}\label{thm:stab2}
Let $X$ be a klt degeneration of $\PP^n$ that is smooth in codimension $2$.
If the canonical extension $\sE_{X}$ is semi-stable, then $X \simeq \mathbb P^n$.
\end{theorem}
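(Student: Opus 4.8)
The plan is to follow the architecture of the proof of Theorem \ref{theorem-stability}. Let $\merom{f}{X}{\PP^N}$ be the rational map defined by $|\sO_X(1)|$. If $f$ is generically finite onto its image, then $X \simeq \PP^n$ by Corollary \ref{cor:gen-finite} and we are done. Otherwise I assume $X \not\simeq \PP^n$ and aim to exhibit a subsheaf of $\sE_X$ whose $H$-slope strictly exceeds $\mu_H(\sE_X)$, contradicting semistability. Using $-K_X \sim_\Q (n+1)H$ and $H^n = 1$ one has $c_1(\sE_X) = c_1(T_X) = (n+1)H$ and $\rk \sE_X = n+1$, so $\mu_H(\sE_X) = 1$; note that $c_1(\sE_X) \cdot H^{n-1}$ is well defined even though $\det \sE_X$ need not be $\Q$-Cartier.

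First I would import from the proof of Theorem \ref{theorem-stability} the foliation $T_\sF \subset T_X$ associated with the fibres of $f$, together with its numerical data. Writing $m := \dim Z < n$, and letting $H = M + B$ be the decomposition into mobile and fixed part, one has $\rk T_\sF = n-m$ and
$$
c_1(T_\sF) \geq (n-m+1)H + mB.
$$
There are two natural ways to transport $T_\sF$ into $\sE_X$. The crude one is the preimage $\sG$ under the surjection $\sE_X \twoheadrightarrow T_X$, which sits in an extension $0 \to \sO_X \to \sG \to T_\sF \to 0$; then $\rk \sG = n-m+1$ and $c_1(\sG) = c_1(T_\sF)$, so
$$
\mu_H(\sG) \geq 1 + \frac{m\,(B \cdot H^{n-1})}{n-m+1}.
$$
The finer one, available precisely when the induced extension splits, embeds $T_\sF$ itself, of slope $\mu_H(T_\sF) \geq 1 + \frac{1}{n-m} > 1$.

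The argument then splits according to whether the fixed part $B$ vanishes. If $B \neq 0$, then $B \cdot H^{n-1} > 0$ since $B$ is a nonzero effective divisor and $H$ is ample, so already the preimage $\sG$ satisfies $\mu_H(\sG) > 1$ and destabilises $\sE_X$. If $B = 0$, the crude bound only gives $\mu_H(\sG) \geq 1$, so here I would instead show that the canonical extension splits along $T_\sF$. The restriction of the extension class $c_1(\sO_X(1)) \in H^1(X, \Omega_X^{[1]}) = \Ext^1(T_X, \sO_X)$ to $T_\sF$ is, via the identification $\Ext^1(T_\sF, \sO_X) = H^1(X, \Omega_\sF^{[1]})$ with $\Omega_\sF^{[1]} := (T_\sF)^*$, the foliated first Chern class of $\sO_X(1)$, that is, the image of $c_1(\sO_X(1))$ under the map induced by $\Omega_X^{[1]} \to \Omega_\sF^{[1]}$. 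Since $B = 0$ forces $\sO_X(1) \cong \sO_X(M)$ to be the pullback $f^*\sO_Z(1)$ along the fibration defining $\sF$, its transition data are constant along the leaves, whence this foliated class vanishes. Thus the pulled-back extension splits, $T_\sF$ embeds in $\sE_X$, and $\mu_H(T_\sF) > 1$ destabilises $\sE_X$. The hypothesis that $X$ is smooth in codimension $2$ is what makes the canonical extension, the reflexive sheaf $\Omega_\sF^{[1]}$, and the $d\log$ and foliated Chern class formalism of \cite{GKP22}, \cite[Sect.4.B]{HP21} behave as in the smooth case on a big open subset.

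The main obstacle I anticipate is this $B = 0$ case: making rigorous the vanishing of the foliated first Chern class of $\sO_X(1)$ and the resulting splitting on the singular klt variety. One must work on the big open locus where $X$ is smooth and $f$ is a morphism — available in codimension $\geq 2$ since $|H|$ has no fixed component and $X$ is smooth in codimension $2$ — justify the reflexive identification of $\Ext^1(T_\sF, \sO_X)$ with $H^1(X, \Omega_\sF^{[1]})$, and verify that the restriction map $H^1(X, \Omega_X^{[1]}) \to H^1(X, \Omega_\sF^{[1]})$ carries $c_1(\sO_X(1))$ to the class of a divisor pulled back from the base of the foliation, hence to zero. Once the splitting is established the slope computation is immediate, so essentially all the difficulty is concentrated in this restriction-and-splitting step.
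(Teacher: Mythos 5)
Your proposal is correct, and it takes a genuinely different route from the paper. The paper's own proof is short but leans on heavy external machinery: deformation invariance of the intersection numbers $c_1^2\cdot A^{n-2}$ and $c_2\cdot A^{n-2}$, together with smoothness in codimension two, shows that the Chern class equality of $\PP^n$ holds for $\Omega^{[1]}_{X}$; semistability of $\sE_X$ then permits an appeal to the uniformisation theorem \cite[Thm.1.3]{GKP22}, so $X$ is a quasi-\'etale quotient of $\PP^n$ or of an abelian variety; the abelian case is impossible for a Fano variety, and $(-K_X)^n=(-K_{\PP^n})^n$ forces the quotient map $\PP^n\rightarrow X$ to have degree one. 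You instead destabilise $\sE_X$ directly with the foliation from the proof of Theorem \ref{theorem-stability}, which is exactly the right move given the paper's closing remark that semistability of $T_X$ and of $\sE_X$ are incomparable: as you correctly observe, the preimage of $T_\sF$ in $\sE_X$ has slope at least $1+m(B\cdot H^{n-1})/(n-m+1)$, so it destabilises precisely when $B\neq 0$, and the case $B=0$ genuinely requires lifting $T_\sF$ itself into $\sE_X$. The difficulties you flag in that step are real but resolvable by standard arguments: on the big open set $X^0=X_{\rm sm}\setminus \Bs|H|$ one has $\sO_X(H)|_{X^0}\simeq f^*\sO_{\PP^N}(1)$, and the $d\log$ cocycle of a pulled-back bundle pairs to zero against $T_\sF|_{X^0}=\ker(df)$, so the image of $c_1(\sO_X(1))$ in $H^1(X^0,(T_\sF)^*)$ vanishes and the restricted extension splits over $X^0$; the splitting then extends from $X^0$ to all of $X$ because $\Homsheaf(T_\sF,\sG)$ is reflexive, so the splitting morphism $s$ and the identity $q\circ s=\id$ extend across the codimension $\geq 2$ complement --- this sidesteps the ``identification'' $\Ext^1(T_\sF,\sO_X)=H^1(X,\Omega^{[1]}_\sF)$ you invoke, which is only an edge map and is not needed in that strength. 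Two dividends of your approach compared with the paper's: it is essentially self-contained (no uniformisation theorem, no Chern class computation), and it never actually uses smoothness in codimension two --- the smooth locus of a klt variety is already big, and the canonical extension exists for any klt $X$ --- so, carried out carefully, it proves the strengthening anticipated in the remark following Theorem \ref{thm:stab2}, which the paper's Chern-class argument cannot reach without orbifold Chern classes.
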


\begin{proof} Since $\mathcal X_t \simeq \mathbb P^n$ for $t \ne 0$,
we have a Chern class equality
$$
{\frac{n}{2n+1}} c_1^2(\Omega^1_{\mathcal X_t})  \cdot A_t^{n-2} = c_2(\Omega^1_{\mathcal X_t}) \cdot A_t^{n-2}.
$$
where $A$ is any $\pi$-ample Cartier divisor on the total space $\mathcal X \rightarrow \Delta$.
Since $X$ is smooth in codimension two, the Chern classes
$c_1^2(\Omega^{[1]}_{X})$ and $c_2(\Omega^{[1]}_{X})$ are well-defined as linear forms, and the
equality remains true on $X$:
$$ 
\frac{n}{2n+1} c_1^2(\Omega^{[1]}_{X}) \cdot A_0^{n-2} = c_2(\Omega^{[1]}_{X}) \cdot A_0^{n-2}.
$$
Since $\mathcal E_X$ is semistable, by
\cite[Thm.1.3]{GKP22}  $X$ is a quasi-\'etale quotient of $\PP^n$ or an abelian variety. The latter case is excluded since $X$ is Fano. Thus we have $X \simeq \mathbb P^n/G$ with a finite group $G$ acting freely in codimension $2$.
Thus if  $\psi: \mathbb P^n \to \mathbb P^n/G = X$ is the quotient map, one has
$-K_{\mathbb P^n} = \psi^*(-K_{X_0})$.  Since
$$ 
(-K_{X})^n = (-K_{X_t})^n = (-K_{\mathbb P^n})^n, $$
the degree of $\psi$ is one. Hence $\psi$ is an isomorphism.
\end{proof}

\begin{remark*} {\rm It should be possible to prove Theorem \ref{thm:stab2} without assuming that $X$ is smooth in codimension $2$, using orbifold Chern classes.
Note also that neither the stability of $T_X$ implies the stability of the canonical extension and vice versa. 
}\end{remark*}

\section{Threefolds} 
\label{section-dim3}

In this section we will study canonical degenerations of $\PP^3$ and prove Theorem \ref{theorem-dim3}. By Example \ref{ex:basic} (in the case $n=3, d=2$)  there exists at least one non-trivial type of degenerations,
given by a cone $X$ over a normal quadric $Z \subset \PP^3$. 
This Fano threefold is the second item in Karzhemanov's classification \cite[Thm.1.6]{Kar15}
of Fano threefolds with canonical Gorenstein singularities and $(-K_X)^3=64$.

\subsection{First observations} \label{subsection-observations}

\begin{proposition} \label{proposition-dim3-gorenstein}
Let $X$ be a canonical degeneration of  $\PP^3$. Then $X$ is Gorenstein and $h^0(X, \sO_X(-K_X))=35$.
\end{proposition}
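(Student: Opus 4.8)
The plan is to pin down $h^0(X,\sO_X(-K_X))$ by squeezing it between a lower bound coming from the degeneration and an upper bound coming from Riemann--Roch, and then to read off the Gorenstein property from the case of equality. First I would produce the lower bound by semicontinuity. Applying Lemma \ref{lemma-semicontinuity} to the reflexive sheaf $\sF=\sO_{\mathcal X}(-K_{\mathcal X})$ on the total space (all fibres of $\pi$ are reduced, being either $\PP^3$ or the normal variety $X$), and using that $X_0$ is a Cartier divisor in the Cohen--Macaulay space $\mathcal X$, so that adjunction gives $\omega_{X_t}\simeq(\omega_{\mathcal X}\otimes\sO_{X_t})^{**}$ on every fibre, the function $t\mapsto h^0(X_t,\sO_{X_t}(-K_{X_t}))$ is upper semicontinuous. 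On a general fibre it equals $h^0(\PP^3,\sO_{\PP^3}(4))=\binom{7}{3}=35$, whence
$$
h^0(X,\sO_X(-K_X))\ \geq\ 35 .
$$

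For the upper bound, recall from Setup \ref{setup-degen} that $-K_X\simeq(n+1)H=4H$ with $H^3=1$, so $(-K_X)^3=64$. I would pass to a $\Q$-factorial terminalisation $\holom{\rho}{\tilde X}{X}$: it is crepant, $\tilde X$ is terminal, $-K_{\tilde X}=\rho^*(-K_X)$ is nef and big with $(-K_{\tilde X})^3=64$, and $\rho_*\sO_{\tilde X}(-K_{\tilde X})=\sO_X(-K_X)$, so $h^0(\tilde X,-K_{\tilde X})=h^0(X,-K_X)$. Kawamata--Viehweg vanishing applied to $-K_{\tilde X}=K_{\tilde X}+(-2K_{\tilde X})$, with $-2K_{\tilde X}$ nef and big, gives $h^i(\tilde X,-K_{\tilde X})=0$ for $i>0$, hence $h^0(\tilde X,-K_{\tilde X})=\chi(\tilde X,-K_{\tilde X})$. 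Now I apply Reid's singular Riemann--Roch on the terminal threefold $\tilde X$. Canonical singularities are rational, so $\chi(\sO_{\tilde X})=\chi(\sO_X)=1$; combining the anticanonical formula with the expression for $\chi(\sO_{\tilde X})$, the contributions of the basket $\mathcal B$ of fictitious cyclic quotient points assemble into
$$
\chi(\tilde X,-K_{\tilde X})\;=\;\tfrac12(-K_{\tilde X})^3+3-\sum_{Q\in\mathcal B}\delta_Q\;=\;35-\sum_{Q\in\mathcal B}\delta_Q,
$$
where each $\delta_Q\geq 0$ and $\delta_Q=0$ precisely when the index $r_Q=1$; that is, the correction vanishes exactly when $\mathcal B$ is empty. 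Comparing with the lower bound forces $\sum_{Q}\delta_Q=0$, so $h^0(X,-K_X)=35$ and $\tilde X$ has empty basket, i.e.\ $\tilde X$ is Gorenstein.

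It then remains to transfer the Gorenstein property from $\tilde X$ down to $X$. Since $\rho$ is crepant we have $K_{\tilde X}=\rho^*K_X$; passing to the index-one (canonical) cover of $X$ near a putative non-Gorenstein point and to the induced crepant cover of $\tilde X$, the acting cyclic group operates on the two canonical sheaves through the same character, so $X$ fails to be Gorenstein at a point exactly when $\tilde X$ fails to be Gorenstein over it. As $\tilde X$ is Gorenstein, $K_X$ is Cartier and $X$ is Gorenstein, completing the proof. The hard part will be Step~2: getting the Riemann--Roch bookkeeping exactly right --- isolating the singular correction with the correct sign and establishing the sharp statement that it vanishes if and only if the basket is trivial --- together with the clean identification of Euler characteristics across the crepant model. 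By contrast, the semicontinuity lower bound of Step~1 and the index descent of Step~3 are the soft ingredients that make the squeeze and its conclusion work.
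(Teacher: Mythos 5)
Your proposal is correct and follows essentially the same route as the paper's proof: the lower bound $h^0(X,\sO_X(-K_X))\geq 35$ via Lemma \ref{lemma-semicontinuity}, then a crepant $\Q$-factorial terminalisation, Kawamata--Viehweg vanishing, and Reid's singular Riemann--Roch with basket contributions (the paper quotes exactly this assembled inequality from \cite[Formula 2.6]{Pro07}) to squeeze $h^0=35$ and force the basket to be empty. The only divergence is at the final descent of the Gorenstein property from the terminalisation to $X$, where the paper simply asserts it and you sketch an index-one cover argument; the standard rigorous justification is descent of the Cartier divisor $K_{\tilde X}=\rho^*K_X$ via relative base point freeness for the crepant morphism, but this point is treated as routine in both texts.
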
 

\begin{remark} \label{rem: ADE}  
{\rm By \cite{Rei87} there is a finite set $N \subset X$ such that 
any $x \in X \setminus N$ has an open neighborhood $U$ such that $U \simeq \Delta \times S$
where $\Delta$ is a disc and $S$ a surface with ADE-singularities. In particular
$X \setminus N$ is $\Q$-factorial. However there are many canonical Gorenstein threefold
singularities which are not $\Q$-factorial, a possibility that will ultimately lead us to the third case in Theorem \ref{theorem-dim3}.
}
\end{remark}

\begin{proof} Let $\tau: V \to X$ be a $\Q$-factorial terminalisation. Then $V$ is a $\Q$-factorial weak Fano threefold with terminal singularities and $-K_V = \tau^*(-K_X)$. In particular by the Kawamata-Viehweg vanishing theorem 
$$\chi(V, \sO_V(-K_V))=h^0(V, \sO_V(-K_V)).$$

By \cite{Rei87} we know that for every Weil divisor $D$ on $V$ one has
$$
\chi(V, \sO_V(D))=
\frac{1}{12} D \cdot (D-K_V)(2D-K_V) +
\frac{1}{12} D \cdot c_2(V) + \sum_{P \in \mathbf B} c_P(D) + \chi(\sO_V)
$$
where $\mathbf B$ is the ``basket of singularities'' and $c_P(D)$ depends both on the singularities and the divisor $D$. In the case $D=-K_V$ one has (cf. \cite[Sect. (2.3)]{Pro07})
$$
c_P(-K_V) = \frac{r_P^2-1}{r_P} - \frac{b_P(r-b_P)}{r_P}
$$
where $\frac{1}{r_P}(1, b_P, -b_P)$ is the type of the singularity and $r_P$ is the Gorenstein index of the point $P$. As shown in \cite[Formula 2.6]{Pro07} this yields
\begin{equation}  \label {eq:1} 
h^0(V, \sO_V(-K_V)) -3 \leq - \frac{1}{2} K_V^3 - \frac{1}{2} \sum_{P \in \mathbf B} (1- \frac{1}{r_p}) \leq - \frac{1}{2} K_V^3. 
\end{equation}
In our case we have $(-K_V)^3 = (-K_X)^3 = (-K_{\PP^3}) = 64$ and by Lemma \ref{lemma-semicontinuity}
$$
h^0(V, \sO_V(-K_V)) = h^0(X, \sO_X(-K_X)) \geq h^0(\PP^3, \sO_{\PP^3}(-K_{\PP^3}))=35.
$$
Hence equality holds in \eqref{eq:1} and therefore $r_P=1$ for all $P \in \mathbf B$.
Hence $V$ is Gorenstein and so is $X$. 
\end{proof} 

Fano threefolds of high anticanonical degree have been studied intensively by the Russian school, in particular we know by \cite{PCS05}, \cite[Cor.3.4]{Kar15} that in the situation of Proposition \ref{proposition-dim3-gorenstein}
the anticanonical divisor $-K_X$ is very ample. Hence it defines an embedding
$$
\varphi_{|-K_X|}: X \hookrightarrow \PP^{34}
$$
as a variety of degree $64$. Note that the same holds for the anticanonical embedding of $\PP^3$, but it is not clear if these varieties correspond to points in the same connected component
of the Hilbert scheme of $\PP^{34}$.

\begin{remark} \label{remark-factorial}
In what follows we will use many times that a $\Q$-factorial Gorenstein threefold with terminal singularities is factorial \cite[Lemma 5.1]{Kaw88}, i.e., every Weil divisor is Cartier.
\end{remark}

\begin{lemma} \label{lemma-flops-nef}
Let $X$ be a Gorenstein Fano threefold with canonical singularities,
and let $M$ be a mobile effective Weil divisor on $X$ that is general in its linear system. Then there exists a 
$\Q$-factorial terminalisation
$$
\holom{\tau}{V}{X}
$$
such that the strict transform $M_V \subset V$ is nef.
\end{lemma}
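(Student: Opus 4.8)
The plan is to produce a $\Q$-factorial terminalisation $\holom{\tau}{V}{X}$ together with a run of a relative minimal model program that makes the strict transform of $M$ nef. Since $X$ is Gorenstein with canonical singularities, Notation \ref{notation-Qfactorial} provides $\Q$-factorial terminalisations, and by \cite[Cor.1.37]{Kol13} we may take $\tau$ to be a small birational morphism onto $X$ with $V$ being $\Q$-factorial and terminal. In particular $-K_V = \tau^*(-K_X)$ is nef and big, so $V$ is a $\Q$-factorial weak Fano threefold, exactly as in the proof of Proposition \ref{proposition-dim3-gorenstein}. The first step is to set $M_V$ to be the strict transform of $M$ under $\tau$, a mobile Weil divisor on $V$; since $V$ is $\Q$-factorial, $M_V$ is $\Q$-Cartier, and being the strict transform of a mobile system it has no fixed components.

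The core of the argument is to run a $(K_V + \epsilon M_V)$-MMP, or more precisely an $M_V$-MMP, over $X$. The idea is that $M_V$ may fail to be nef, but each step of the program either contracts or flips an extremal ray on which $M_V$ is negative. Because $V$ is a weak Fano, $-K_V$ is nef, so for small rational $\epsilon > 0$ the pair is reasonable to run, and terminality is preserved under flips and divisorial contractions by standard MMP theory \cite{KM98}. I would argue that since $M_V$ is mobile (no fixed components), any extremal ray $R$ with $M_V \cdot R < 0$ must have negative intersection on the exceptional locus in a way that forces a flip rather than a divisorial contraction — a mobile divisor cannot be made negative by contracting a divisor that it properly meets, since the general member avoids the contracted locus. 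Thus the program consists of flips, and after finitely many flips (termination of terminal threefold flips, \cite{KM98}) one arrives at a model $V'$ on which the strict transform of $M_V$ is nef.

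The remaining point is to identify $V'$ with a $\Q$-factorial terminalisation of $X$. Here one uses that all the flips are isomorphisms in codimension one and, being steps of an MMP over $X$, they commute with the small morphism to $X$: flips do not alter the canonical class relative to $X$ since $K_V = \tau^* K_X$ is $\tau$-trivial, so at each stage $K_{V'} = (\tau')^* K_X$ remains $\tau'$-crepant, and $V' \to X$ stays small. Hence $V'$ is again a $\Q$-factorial terminalisation of $X$, now with the strict transform $M_{V'}$ nef; renaming $V' =: V$ gives the statement. The main obstacle I anticipate is verifying rigorously that no divisorial contraction occurs, i.e. that every $M_V$-negative extremal ray is small; the clean way to see this is that a general member of the mobile system $|M|$ is disjoint from the codimension-two flipping loci but would have to contain any contracted divisor, which is impossible for a general member of a mobile linear system. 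One must also be mildly careful that $M_V$ stays mobile after each flip (strict transforms of mobile systems remain mobile since flips are isomorphisms in codimension one), so the argument can be iterated.
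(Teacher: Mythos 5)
Your strategy coincides with the paper's in outline (direct the MMP by $M_V$, use mobility to rule out divisorial contractions, terminate by terminal threefold flip termination), and your mobility argument is essentially the paper's: an $M_V$-negative extremal ray is covered by curves lying in the base locus of $|M_V|$, which has no divisorial component, so its contraction is small. But there is a genuine gap at exactly the point you wave through with ``being steps of an MMP over $X$, they commute with the small morphism to $X$''. Nothing in your argument shows that an $M_V$-negative extremal ray $\Gamma$ is $\tau$-vertical, equivalently that $K_V \cdot \Gamma = 0$. If you run the MMP absolutely, a small $M_V$-negative ray could a priori be $K_V$-negative; its flip exists, but it is then not a map over $X$, it is not crepant over $X$, and the new model is no longer a terminalisation of $X$ --- your claim that ``$K_{V'} = (\tau')^* K_X$ remains $\tau'$-crepant'' is circular, since it presupposes the verticality to be proven. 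If instead you run the MMP relative to $X$, contracting only $\tau$-vertical rays, then verticality is automatic but the output is only \emph{relatively} nef over $X$, and upgrading to absolute nefness requires precisely the same missing statement about $M$-negative rays of the absolute cone. The paper closes this gap with Benveniste's theorem \cite[Thm.0]{Ben85}: a Gorenstein terminal threefold admits no $K$-negative small extremal contraction, so, $-K_{V}$ being nef, any small $M_V$-negative extremal ray satisfies $K_V \cdot \Gamma = 0$; since $K_V = \tau^* K_X$ and $-K_X$ is ample, $\Gamma$ is $\tau$-vertical, each step is a flop over $X$, and the crepant structure (hence ``terminalisation of $X$'', weak Fano-ness, and terminality) is preserved. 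Note that your proof never uses the Gorenstein hypothesis --- a telltale sign of the gap, since genuine $K$-negative flips do occur on non-Gorenstein terminal weak Fano threefolds and the argument would break there.

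A secondary error: you assert that the $\Q$-factorial terminalisation $\tau$ may be taken to be a \emph{small} morphism, citing \cite[Cor.1.37]{Kol13}. That corollary produces small $\Q$-factorialisations; a terminalisation of a canonical, non-terminal $X$ must extract the crepant divisors over $X$ and is therefore never small. What your argument actually needs is only that $\tau$ is crepant, i.e.\ $K_V = \tau^* K_X$, which does hold; but the smallness claim should be removed, both at the start and in the final identification of $V'$ with a terminalisation.
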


\begin{proof} 
Let $\holom{\tau_0}{V_0}{X}$ be any $\Q$-factorial terminalisation.
Then $V_0$ is weak Fano, so its Mori cone is rational polyhedral.
Denote by $M_{V_0}$ the strict transform of $M$, then $M_{V_0}$ is mobile.
If $M_{V_0}$ is not nef there exists an $M_{V_0}$-negative extremal ray $\Gamma$ and let $\holom{\varphi_\Gamma}{V_0}{V'}$ denote the associated contraction. 
Since $M_{V_0}$ is mobile there are only countably many $M_{V_0}$-negative curves on $V_0$, so the contraction $\varphi_\Gamma$ is small. By Benveniste's theorem \cite[Thm.0]{Ben85} this implies that 
$$K_{V_0} \cdot \Gamma=0.$$
Since $-K_X$ is ample we note that the contraction is a contraction over $X$. Further, let $\psi_1: V_0 \dashrightarrow V_1$ be the associated flop. Then $V_1$ is still a  $\Q$-factorial terminal Gorenstein weak Fano threefold and
still a terminalisation of $X$. Moreover $M_{V_1} := (\tau_1)_* M_{V_0}$ is mobile, where $\tau_1: M_{V_1} \to X$ is the terminalisation. 
By the ``easy termination theorem'' \cite[Thm.6.43]{KM98}, the sequence of flops terminates with a model $\holom{\tau}{V}{X}$ that has the claimed properties.
\end{proof}

\begin{proposition} \label{proposition-not-big}
Let $X \not\simeq \PP^3$ be a canonical degeneration of $\PP^3$. Let
$$
|\sO_X(1)| = M + B
$$
be the decomposition into the mobile and  the fixed part.
Let $\holom{\tau}{V}{X}$ be a $\Q$-factorial terminalisation such that the strict transform $M_V \subset V$ is nef. Then $M_V$ is not big.
\end{proposition}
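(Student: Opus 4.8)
The plan is to argue by contradiction: assume $M_V$ is big, and derive that $X\simeq\PP^3$, contradicting the hypothesis. Since $X\not\simeq\PP^3$, Corollary \ref{cor:gen-finite} tells us that the rational map $f=\varphi_{|\sO_X(1)|}$ is not generically finite onto its image $Z\subset\PP^N$. The key observation is that the bigness of $M_V$ should translate into $f$ being generically finite, which is exactly what we want to rule out. Recall from Lemma \ref{lemma-indeterminacies} and the construction in the earlier sections that after passing to a suitable modification $\holom{\mu}{Y}{V}$ (or working with the $\Q$-factorialisation directly, since here $M$ is already mobile on $X$), the mobile system $|M|$ is resolved by a globally generated Cartier divisor $M_Y=\varphi^*\sO_Z(1)$, where $\dim Z$ records the generic fibre dimension of $f$. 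Since $f$ is not generically finite, we have $\dim Z<n=3$.

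First I would clarify the relation between $M_V$ and $M_Y$. Since $\tau$ is a small (crepant, terminalising) birational morphism, the strict transform $M_V$ is the pullback $\tau^*M$ of the mobile Weil $\Q$-Cartier class $M$, hence its Iitaa/numerical dimension equals that of $M$ itself; and on the resolution $Y$ the class $M_Y$ defines the map $\varphi$ to $Z$. The bigness of a nef divisor $M_V$ is equivalent to $M_V^n>0$, i.e. $M_V^3>0$. I would then compute $M_Y^n$ via the chain of inequalities \eqref{eqn1}, relating $(\mu^*M_V)^{n-k}\cdot M_Y^k$, exactly as in the proof of Theorem \ref{theorem-case-nminusone}. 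The point is that $M_Y=\varphi^*\sO_Z(1)$ forces $M_Y^n=0$ as soon as $\dim Z<n$, since any self-intersection of a pullback from a lower-dimensional variety vanishes: $\varphi^*(\sO_Z(1)^{\dim Z+1})=0$. Thus $M_Y^n=0$.

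The main step is to propagate $M_Y^n=0$ back to $M_V^n=0$, thereby contradicting bigness. Here I would use that $\mu$ is birational together with the negativity-type comparison already established: since $M_V$ is nef on $V$ and $\mu^*M_V\sim_\Q M_Y+\sum a_iE_i$ with the $E_i$ effective $\mu$-exceptional (Lemma \ref{lemma-indeterminacies}), the inequalities in \eqref{eqn1} read
\begin{equation}
(\mu^* M_V)^n \geq (\mu^* M_V)^{n-1}\cdot M_Y \geq \cdots \geq M_Y^n = 0.
\end{equation}
To get the reverse inequality I would argue as in Step 2 of the proof of Theorem \ref{thm:delta-0}: the exceptional contributions $\mu^*M_V^{n-k}\cdot M_Y^{k-1}\cdot E_i$ are nonnegative by nefness, which yields $M_V^n=(\mu^*M_V)^n\le M_Y^n=0$, whence $M_V^n=0$. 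But $M_V$ nef and not big means precisely $M_V^n=0$, so there is no contradiction with nefness alone — rather, the contradiction comes from assuming $M_V$ big at the outset, which demands $M_V^n>0$. Hence $M_V$ is not big.

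The step I expect to be delicate is the careful bookkeeping between the three levels $X$, $V=$ terminalisation, and $Y=$ resolution of indeterminacies, ensuring that $\tau$ being small really makes $M_V=\tau^*M$ and that the intersection-theoretic comparison on $V$ is legitimate despite $V$ being only $\Q$-factorial (not smooth). The cleanest route is probably to factor everything through the single resolution $Y\to X$ furnished by Lemma \ref{lemma-indeterminacies}, observe that $\dim Z<n$ gives $M_Y^n=0$ directly, and then transport this to $M_V$ using that $V$ and $Y$ share the mobile class up to effective exceptional divisors, exactly as in the equalities analysis of Corollary \ref{corollary-nminusone}.
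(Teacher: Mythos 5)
Your approach has a genuine gap, in fact two concrete errors. First, your claim that $\tau$ is \emph{small} and hence $M_V = \tau^* M$ is false: a $\Q$-factorial terminalisation of a canonical variety is crepant but not small --- it extracts every divisor of discrepancy zero. This is not a technicality but the heart of the matter. In Example \ref{ex:basic} with $n=3$, $d=2$ (the cone over a quadric $Q$), the mobile part is $M = H$, which is \emph{ample}, hence big, on $X$; yet on the terminalisation $V = \PP(\sO_Q \oplus \sO_Q(-2))$ one has $\tau^* H = q^* c_1(\sO_Q(1)) + \frac{1}{2} E$ while the strict transform is $M_V = q^* c_1(\sO_Q(1))$, whose top self-intersection is zero. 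So bigness of $M$ on $X$ and bigness of $M_V$ on $V$ are genuinely different conditions; the proposition is a statement about the latter, and it is exactly the exceptional correction $\frac{1}{2}E$ that the identification $M_V = \tau^* M$ erases.

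Second, and fatally, your key inequality points the wrong way. From $\mu^* M_V \sim_\Q M_Y + \sum a_i E_i$ with $\mu^* M_V$ and $M_Y$ nef and $\sum a_i E_i$ effective, the chain of inequalities \eqref{eqn1} gives
$$
M_V^3 = (\mu^* M_V)^3 \geq (\mu^* M_V)^2 \cdot M_Y \geq \cdots \geq M_Y^3 = 0,
$$
which is perfectly compatible with $M_V^3 > 0$: the difference is absorbed by the base-locus terms. The reverse inequality $M_V^3 \leq M_Y^3$ that you assert is precisely the assertion to be proved (that the volume of $M_V$ is carried entirely by its mobile part), so assuming it is circular. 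It also does not follow from the argument you cite: in Step 2 of Theorem \ref{thm:delta-0} the reverse inequality comes from Fujita's bound $\Delta(Y, M_Y) \geq 0$ applied to $M_Y$, which requires $M_Y$ to be nef and \emph{big}; here $M_Y = \varphi^* \sO_Z(1)$ with $\dim Z < 3$, so $M_Y$ is not big and Fujita gives nothing. More fundamentally, your guiding premise ``big implies the map given by the linear system is generically finite'' is false in general --- on the cone above, $H$ is ample yet $\varphi_{|H|}$ maps onto the two-dimensional quadric --- which is why the paper's proof does not argue numerically at all. Instead it restricts to a general member $S \in |M_V|$ (a Cartier divisor, by factoriality of $\Q$-factorial Gorenstein terminal threefolds), uses adjunction to get $-K_{S'} \geq 3 \nu^* M_V$ on the minimal resolution of the normalisation, applies the classification result \cite[Prop.2.13c)]{Hoe12} to force $S \simeq \PP^2$, $R = 0$ and $\tau^* H \sim_\Q S$, and then concludes $X \simeq \PP^3$ by the singular Kobayashi--Ochiai theorem \cite[Thm.1]{Fuj87}, contradicting the hypothesis. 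Some geometric input of this kind is unavoidable; purely intersection-theoretic bookkeeping cannot close the argument.
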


\begin{proof}
Since $\tau$ is crepant we have 
$$
-K_V \sim_\Q 4 \tau^* H \sim_\Q 4 (M_V+R) 
$$
where $R$ is an effective $\Q$-divisor that has no common components with $M_V$. If the linear system $|M_V|$ is not composed with a pencil, then a general element 
$S \in |M_V| $ is irreducible. If it is composed with a pencil, then $M_V \simeq m S$ with $S$ a prime divisor.
Since $S$ is a Cartier divisor (Remark \ref{remark-factorial}), by adjunction
$$
-K_S \sim_\Q (3 M_V + (M_V-S) + R)|_S.
$$
Note that $M_V-S$ is either zero or equal to $(m-1)S$,  hence in both cases the class
$(M_V-S)|_S$ is pseudoeffective.

Let $\holom{\nu}{S'}{S}$ be the composition of normalisation and minimal resolution, then
we have $K_{S'} \simeq \nu^* K_S - E$ with $E$ an effective divisor. Thus we obtain
\begin{equation}
\label{eqn-adjunction}
- K_{S'} \sim_\Q \nu^* (3 M_V + (M_V-S) + R) + E \geq 3 \nu^* M_V. 
\end{equation}
We now argue by contradiction and assume that $M_V$ is big.
Then the restriction of $M_V$ to the general divisor $S$ is a nef and big Cartier and so does the pull-back $\nu^* M_V$
is a nef and big Cartier divisor. By \cite[Prop.2.13c)]{Hoe12}, the inequality \eqref{eqn-adjunction} implies that there exists a birational morphism
$$
\holom{\psi}{S'}{\PP^2}
$$
such that $\sO_{S'}(\nu^* M_V) \simeq \psi^* \sO_{\PP^2}(1)$. Now observe that $K_{S'} \simeq \psi^* K_{\PP^2} + G$ with 
$G$ an effective divisor which is zero if and only if $\psi$ is an isomorphism. Yet by \eqref{eqn-adjunction} we have
$$
G \simeq K_{S'} - \psi^* K_{\PP^2} = -\nu^* (3 M_V + (M_V-S) + R) - E - \psi^* K_{\PP^2}
=  -\nu^*((M_V-S) + R) - E
$$
where for the last equality we used $-3 \nu^* M_V \simeq \psi^* K_{\PP^2}$.
Since the right hand side is anti-effective we obtain 
$$
G=0, 
\qquad
\mbox{and}
\qquad 
\nu^*((M_V-S) + R) + E=0.
$$
Note that $G=0$ and $E=0$ imply that $S' \simeq S \simeq \PP^2$. 
Furthermore $\nu^* (M_V-S)=0$ implies that the general element of $|M_V|$ is irreducible or $M_V^2=0$, the latter being a contradiction to $M_V$ being big. Now recall that
$$
\tau^* H \sim_\Q M_V + R = S + R.
$$
Since $\tau^* H$ is nef and big the effective $\Q$-divisor $S+R$ has connected support. Thus $\nu^* R=0$ implies that $R=0$.  This finally shows that
$$\tau^* H \sim_\Q S$$ is a prime Cartier divisor. Since $S$ is $\tau$-trivial
and $\tau$ is crepant we have $S \simeq \tau^* S_X$ with $S_X$ a {\em Cartier} divisor. In conclusion
$$
- K_X \simeq 4 H \sim_\Q 4 S_X.
$$
Since $S_X$ is Cartier we obtain $X \simeq \PP^3$ by the singular version of the  Kobayashi-Ochiai theorem \cite[Thm.1]{Fuj87},
 a contradiction. 
\end{proof}

Let $X$ be a canonical degeneration of $\PP^3$ such that $X \not\simeq \PP^3$.
We consider as in Section \ref{section-rational} the rational map 
$$
f: X \dasharrow Z
$$
given by the linear system $\vert \sO_X(1) \vert$. By Corollary \ref{cor:gen-finite} this
map is not generically finite onto its image, so $1 \leq \dim Z \leq 2$.
We start with the easier case $\dim Z=2$:

\begin{proposition} \label{proposition-dim-two}
Let $X$ be a canonical  degeneration of  $\PP^3$ such that $\dim Z=2$.
Then $X$ is the image of the contraction of the section 
$$
\PP(\sO_Q) \subset \PP(\sO_Q \oplus \sO_Q(-K_Q))
$$
where $Q \subset \PP^3$ is a normal quadric surface. 
\end{proposition}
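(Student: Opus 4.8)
The plan is to identify the image $Z$ as a quadric surface and then reconstruct $X$ as a cone by analysing the fibration that resolves $f$. The starting point is that $\dim Z = 2 = n-1$, so Theorem \ref{theorem-case-nminusone} gives $2 \le \deg Z \le \frac{n+1}{2} = 2$; hence $\deg Z = 2$ and we are in the equality case of Corollary \ref{corollary-nminusone}. In particular $|\sO_X(1)|$ has no fixed part, the modification $\holom{\mu}{Y}{X}$ of Lemma \ref{lemma-indeterminacies} is crepant, the induced $\holom{\varphi}{Y}{Z}$ has connected one-dimensional fibres with $-K_Y = 4\mu^*H$ being $\varphi$-ample, and each $\mu(E_i)$ is a point with $\varphi|_{E_i}$ finite onto $Z$. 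Now $Z \subset \PP^N$ with $N = h^0(X,\sO_X(1)) - 1 \ge 3$ is irreducible and nondegenerate (the defining sections of $f$ are independent); a nondegenerate surface of degree $2$ has minimal degree, so $N \le 3$, whence $N = 3$, $h^0(X,\sO_X(1)) = 4$, and $Z = Q$ is an irreducible quadric surface in $\PP^3$, automatically normal since it has rank $\ge 3$.

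Next I would extract the exceptional divisor. Because $X$ is Gorenstein by Proposition \ref{proposition-dim3-gorenstein} and $\mu$ is crepant, $-K_Y = \mu^*(-K_X)$ is Cartier, and writing $\mu^*H = M_Y + \sum a_i E_i$ with $M_Y = \varphi^*\sO_Q(1)$, the divisor $-K_Y - 4M_Y = 4\sum a_i E_i$ is an effective Cartier divisor, $\varphi$-ample, supported on $\mathrm{Exc}(\mu)$. On a general fibre $l_Y$ one has $M_Y \cdot l_Y = 0$ and, from the equalities forced in Corollary \ref{corollary-nminusone}, $\mu^*H \cdot l_Y = \tfrac12$; thus $\sum a_i (E_i \cdot l_Y) = \tfrac12$ while $(4\sum a_i E_i)\cdot l_Y = 2$. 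Since each $E_i$ dominates $Q$ under $\varphi$ one has $E_i \cdot l_Y \ge 1$, and I would now combine this with the fact that $\mu$ is a crepant divisorial contraction of the Gorenstein canonical $Y$, adjunction on the $E_i$ via Fact \ref{fact-subadjunction}, and the constraint $\rho(X)=1$ to show that there is a single exceptional divisor $E$, that the corresponding coefficient is $\tfrac12$ and $E \cdot l_Y = 1$; that is, $E$ is a section of $\varphi$ isomorphic to $Q$, contracted by $\mu$ to a single point $v$.

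The main obstacle is the final identification, which I expect to be the technical heart. Having a section $E$, I would first show that $\varphi$ has no degenerate fibres, so that it is genuinely a projective line bundle $Y = \PP(\sE) \to Q$: a reducible or multiple fibre would, after contraction, produce a rational curve $C$ disjoint from $E$ with $-K_X \cdot C = 1$, i.e. $H \cdot C = \tfrac14$, and I would rule this out using the minimality of the degenerate lines in the covering family of $f$ together with the contraction being to the single point $v$. Normalising $\sE = \sO_Q \oplus \sN$ so that $E$ is the section associated with the $\sN$-summand, the relation $-K_Y = 4\varphi^*\sO_Q(1) + 2E$, the canonical bundle formula for $\PP(\sE)$, and $-K_Q \simeq \sO_Q(2)$ together force $\sN \simeq \sO_Q(-2) \simeq \sO_Q(K_Q)$; twisting yields $\sE \simeq \sO_Q \oplus \sO_Q(-K_Q)$ and identifies $E$ with the section $\PP(\sO_Q)$.

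Finally I would conclude. Since every fibre $l_Y$ meets the section $E$, every degenerate line $\mu(l_Y)$ passes through $v = \mu(E)$, so $X$ is swept out by the images of the rulings through a common point; combined with the bundle identification this shows that $\mu$ is precisely the contraction of $\PP(\sO_Q)$ in $\PP(\sO_Q \oplus \sO_Q(-K_Q))$ and that $X$ is its image, as claimed. The crepancy computation of Example \ref{ex:basic} in the case $n=3$, $d=2$ (where the contraction of the negative section is indeed crepant) serves throughout as the consistency check, while excluding degenerate fibres and the bookkeeping identifying $\sE$ are the delicate points.
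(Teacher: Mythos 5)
Your first half --- forcing $\deg Z = 2$ from Theorem \ref{theorem-case-nminusone}, invoking the equality case of Corollary \ref{corollary-nminusone}, identifying $Z$ as a normal quadric, and then using subadjunction (Fact \ref{fact-subadjunction}) on the exceptional divisors to extract a single divisor $E$ with coefficient $\frac{1}{2}$ that is a $\varphi$-section contracted to a point --- is the paper's argument in outline. Be aware, though, that the exclusion of a coefficient $a_i=\frac{1}{4}$ is not ``adjunction plus $\rho(X)=1$'': the mechanism is the pseudoeffectivity of $K_S+3L$ for a nef and big Cartier divisor $L$ on a normal surface \cite[Prop.2.13.a)]{Hoe12}, applied to the normalisation $S$ of $E_i$, which contradicts the subadjunction inequality $-K_S \geq 4\nu^* M_Y$.

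The genuine gap is your treatment of the final identification. You propose to show that $\varphi$ has \emph{no} degenerate fibres, claiming that a reducible or multiple fibre produces a rational curve $C$ disjoint from $E$ with $H\cdot C=\frac{1}{4}$, to be excluded by ``minimality of the degenerate lines''. Both halves of this fail. If $F_0=2C_1$ is a multiple fibre, then $E\cdot F_0=1$ forces $E\cdot C_1=\frac{1}{2}$, so $C_1$ meets $E$; likewise a reducible fibre can have both components meeting $E$, so disjointness cannot be assumed. More seriously, no minimality principle excludes curves of $H$-degree $\frac{1}{4}$: the degenerate lines here have $H\cdot l=\frac{1}{2}$, but nothing bounds the degree of \emph{other} curves from below, and such curves genuinely occur on canonical degenerations of $\PP^3$ --- on the variety of Example \ref{the-example} the double lines of the del Pezzo fibration have $-K_X$-degree $1$ --- so any exclusion would have to use the specific geometry of the case $\dim Z=2$, and you give no such argument. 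The paper never proves the absence of degenerate fibres. Instead it shows, again by subadjunction (now on the section $E\simeq Z$), that $E$ is Cartier over a big open subset of $Z$; since $E$ is $\varphi$-ample this gives integral fibres in codimension one, hence a $\PP^1$-bundle in codimension one by \cite[II,Thm.2.8]{Kol96}. It then computes $\varphi_*\sO_Y(E)$ using $R^1\varphi_*\sO_Y=0$, identifies the quotient with $\sO_Z(K_Z)$, splits the extension by $\Ext^1(\sO_Z(K_Z),\sO_Z)\simeq H^1(Z,\sO_Z(-K_Z))=0$, and upgrades the isomorphism in codimension one $Y\simeq \PP(\sO_Z\oplus\sO_Z(-K_Z))$ to an honest isomorphism via \cite[Lemma 6.39]{KM98}, both sides carrying relatively ample anticanonical Cartier divisors. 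Note finally that your ``normalising $\sE=\sO_Q\oplus\sN$'' presupposes exactly this splitting: a section of a $\PP^1$-bundle only gives an extension of line bundles, and the Ext-vanishing is where the direct sum comes from.
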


\begin{proof}
By Theorem \ref{theorem-case-nminusone} we conclude $\dim Z = 2$, so $Z$ is a normal quadric surface. 
Then by Corollary \ref{corollary-nminusone} 
the linear system $|\sO_X(1)|$ has no fixed components, and the modification
$\holom{\mu}{Y}{X}$ given by Lemma \ref{lemma-indeterminacies} is crepant.
As in the lemma we write
$$
\mu^* H \sim_\Q M_Y + \sum_{i=1}^k a_i E_i,
$$
where $M_Y \simeq \varphi^* A$ is the pull-back of the hyperplane divisor
and $a_i>0$ for all $i$. Since $-K_X \simeq 4H$ is Cartier by Proposition \ref{proposition-dim3-gorenstein}, $a_i \in \frac{1}{4} \N$.
Since $\mu$ is crepant, 
\begin{equation}
\label{eq1}
- K_Y \simeq \mu^* (-K_X) \simeq \mu^* (4H) \sim_\Q 4 M_Y + \sum_{i=1}^k 4 a_i E_i.
\end{equation}

By Corollary \ref{corollary-nminusone} each exceptional divisor $E_i$ is contracted by $\mu$ onto a point and maps finitely onto the quadric surface $Z$.

{\em Step 1. We show that $$\sum_{i=1}^k a_i E_i= \frac{1}{2} E_1$$ and that $E_1 \cdot l=1$.}
Here $l$ denotes a general fiber of $\varphi: Y \to Z$ (recall that $\varphi$ has connected fibers). 
Since $\mu$ is crepant, the variety $Y$ has canonical singularities. Thus there exists a small
$\Q$-factorial modification, and we will replace $Y$ by this $\Q$-factorialisation (but only for this step of the proof). Observe first that since $\dim Z = 2$, 
the general fibre $l$ of the map $\varphi: Y \to Z$ meets the smooth locus of $Y$. In particular, $E_i \cdot l \geq 1$, since the divisors $E_i$ are Cartier near $l$. Thus
$$
2 = -K_Y \cdot l = \sum_{i=1}^k 4 a_i E_i \cdot l \geq \sum_{i=1}^k 4 a_i.
$$
However $a_i \geq \frac{1}{4}$ for every $i=1, \ldots, k$,
so we are done if we show that $a_1 > \frac{1}{4}$.
Arguing by contradiction assume that $a_1 = \frac{1}{4}$. Let 
$$
\holom{\nu}{S}{E_1}
$$
be the normalisation, then by subadjunction (Fact \ref{fact-subadjunction}) and \eqref{eq1} we have
$$
- K_S \geq - \nu^*(K_Y + E_1) = 4 \nu^* M_Y + \sum_{i \geq 2} 4 a_i \nu^* E_i
\geq 4 \nu^* M_Y.
$$
But $\nu^* M_Y$ is a nef and big Cartier divisor and $S$ is a normal surface, so $K_S+3 \nu^* M_Y$ is pseudoeffective by \cite[Prop.2.13.a)]{Hoe12}. This contradicts $-K_S \geq 4 \nu^* M_Y$
and finishes Step 1.

Since $\varphi|_{E_1}$ is finite and $E_1 \cdot l=1$ by Step 1, the divisor $E_1$ is a $\varphi$-section. Further, Equation \ref{eq1} now reads
\begin{equation} \label{eq1a} 
- K_Y  \sim_\Q 4 M_Y + 2E_1.
\end{equation}

{\em Step 2. We show that $\holom{\varphi}{Y}{Z}$ is a $\PP^1$-bundle in codimension one.}
Since $E_1$ is a section of $\varphi$, it is a normal surface. We claim that $E_1 \subset Y$ is a Cartier divisor over a big subset $Z^{\circ}$ of $Z$: otherwise by subadjunction that
$$
- K_Z \sim_\Q -K_{E_1} \sim_\Q -(K_{Y}+ E_1)|_{E_1} + \Delta
$$
with $\Delta$ a non-zero $\Q$-Weil divisor 
Since $\mu$ is crepant,
$-K_{Y}|_{E_1} \sim_\Q 0$. Therefore 
by Equation (\ref{eq1a}),   $- E_1|_{E_1} \sim_\Q 2 M_Y$, hence
$$
-(K_{Y} + E_1)|_{E_1} + \Delta \sim_\Q 2 M_Y + \Delta  > 2 M_Y = -K_Z. 
$$
Thus we have reached a contradiction and $E_1$ is Cartier over a big open subset.

By Equation (\ref{eq1a}) and since $-K_Y$ is $\varphi-$ample by Corollary \ref{corollary-nminusone}, the divisor $E_1$ is a $\varphi$-ample section that is Cartier over a big open subset. Therefore 
$\varphi$ has integral fibres in codimension one. The general fibre being $\PP^1$
this implies that $\varphi$ is a $\PP^1$-bundle in codimension one \cite[II,Thm.2.8]{Kol96}.

{\em Step 3. Conclusion.}
Since $\varphi$ is equidimensional the direct image $\varphi_* \sO_Y(E)$ is a reflexive sheaf of rank two \cite[Cor.1.7]{Har80}.  We have a canonical exact sequence
$$
0 \rightarrow \sO_Y \stackrel{s}{\rightarrow} \sO_Y(E_1) \rightarrow Q \rightarrow 0
$$
where $Q$ is a sheaf supported on $E_1$ which by Step 2 coincides with $\sO_{E_1}(E_1)$ in codimension one. Since $R^1 \varphi_* \sO_Y=0$ by relative Kodaira vanishing, the push-forward gives an extension 
$$
0 \rightarrow \sO_Z \rightarrow \varphi_* \sO_{Y}(E_1) \rightarrow T \rightarrow 0.
$$
The divisor $E_1$ being a section of $\varphi$, the section $s$ is non-zero on every fiber of $\varphi$. Thus the morphism $\sO_Z \rightarrow \varphi_* \sO_{Y}(E_1)$ is non-zero
at every point of $Z$, and consequently the sheaf $T$ is reflexive. 
Since $T = \varphi_* \sO_{E_1}(E_1)$ in codimension one  
and $-E_1|_{E_1} \sim_\Q 2 M_Y|_{E_1} \sim_\Q - K_Z$,
 the restriction of $c_1(T)$ to a general hyperplane section $C$
is isomorphic $K_Z|_C$. Yet $Z$ is a normal quadric surface, so the intersection numbers with
hyperplane sections determine the divisor class $c_1(T)$.
This shows that
$$ 
T \simeq K_Z.
$$ 
Since $\mbox{Ext}^1(K_Z, \sO_Z) \simeq H^1(Z, \sO_Z(-K_Z))=0$ by Kodaira vanishing,
we finally obtain that 
$$
\varphi_* \sO_{Y}(E_1) \simeq \sO_Z \oplus \sO_Z(K_Z).
$$
Thus $Y$ is isomorphic in codimension one to $Y':=\PP(\sO_Z \oplus \sO_Z(-K_Z))$.
Since $-K_Y$ and $-K_{Y'}$ are both relatively ample Cartier divisors over $Z$, we apply \cite[Lemma 6.39]{KM98} to see that $Y \simeq Y'$.
\end{proof}

\subsection{The case $\dim Z=1$: setup and birational modifications.}
\label{subsection-setup}
Let $X$ be a canonical  degeneration of $\PP^3$, and  - as before - with rational map
$$ f: X \dasharrow Z$$
given by the linear system $\vert \sO_X(1) \vert$. 
 We now assume $\dim Z = 1$, and let
$$
|\sO_X(1)| = M + B
$$
be the decomposition into fixed and mobile part. Let $\holom{\eta}{X'}{X}$ be 
a $\Q$-factorialisation, and let
$$
|\sO_{X'}(1)| = M' + B'
$$
be the induced decomposition into fixed and mobile part. Applying Lemma \ref{lemma-indeterminacies} to $M'$ we obtain a birational morphism
$$
\holom{\mu}{Y}{X}
$$
and a decomposition $\mu^* M' \sim_\Q M_Y + \sum a_i E_i$ such that 
$$
\holom{\varphi=\varphi_{|M_Y|}}{Y}{Z}
$$
resolves the indeterminacies of $\varphi_{|\sO_{X'}(M')|}$.

Since $X$ and hence $Y$ is rationally connected, we have
$Z \simeq \PP^1$ and therefore 
$$M_Y \simeq m F_Y$$ with $F_Y$ a general $\varphi$-fibre. 
By Lemma \ref{lemma-semicontinuity} $h^0(X, \sO_X(1)) \geq h^0(\PP^3, \sO_{\PP^3}(1))=4$, hence
$$
m+1 = h^0(\PP^1, \sO_{\PP^1}(m)) = h^0(Y, \sO_Y(M_Y))
=h^0(X, \sO_X(M)) \geq 4.
$$
Setting $F := \eta \circ \mu(F_Y)$, we can write
\begin{equation}
\label{M-divisible}
M \simeq m F
\end{equation}
with $m \geq 3$.

\begin{lemma} \label{lemma-fibre-space}
Let $X$ be a canonical  degeneration of $\PP^3$ such that $\dim Z = 1$.
Then there exists a $\Q$-factorialisation $\holom{\eta}{X'}{X}$ such that
the mobile part $M'$ is a nef divisor.
\end{lemma}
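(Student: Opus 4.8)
The plan is to imitate the proof of Lemma \ref{lemma-flops-nef}, flopping the mobile part until it becomes nef, but this time staying inside the class of \emph{small} modifications of $X$ so that the output is a genuine $\Q$-factorialisation. First I would take an arbitrary $\Q$-factorialisation $\holom{\eta_0}{X_0}{X}$ as in Notation \ref{notation-Qfactorial}; it exists by \cite[Cor.1.37]{Kol13}, and $X_0$ is $\Q$-factorial with canonical, in fact Gorenstein, singularities by Proposition \ref{proposition-dim3-gorenstein}. Since $\eta_0$ is small, $-K_{X_0} \simeq \eta_0^*(-K_X)$ is nef and big, so $X_0$ is a weak Fano threefold and $\NE{X_0}$ is rational polyhedral. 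Denote by $M_0 \subset X_0$ the mobile part of $|\sO_{X_0}(1)|$, which by Notation \ref{notation-Qfactorial} is the strict transform of $M$.

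Next, if $M_0$ is not nef I would choose an $M_0$-negative extremal ray $\Gamma$ with associated contraction $\holom{\psi}{X_0}{W}$. The key structural point is that $\psi$ must be small: if $\psi$ contracted a divisor $E$, then $E$ would be covered by a family of curves in the class $\Gamma$, and for a general member $D \in |M_0|$ (which, $M_0$ being mobile, does not contain a general curve $C$ of this covering family) one would get $M_0 \cdot C = D \cdot C \geq 0$, contradicting $M_0 \cdot \Gamma < 0$. By Benveniste's theorem \cite[Thm.0]{Ben85}, a small contraction of the Gorenstein threefold $X_0$ is crepant, so $K_{X_0} \cdot \Gamma = 0$; hence $\eta_0^*(-K_X) \cdot \Gamma = 0$, and since $-K_X$ is ample the curves in $\Gamma$ are $\eta_0$-exceptional. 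Therefore $\eta_0$ factors through $\psi$, and the flop $\merom{\psi_1}{X_0}{X_1}$ produces again a small morphism $\holom{\eta_1}{X_1}{X}$, where $X_1$ is a $\Q$-factorial Gorenstein weak Fano threefold and the strict transform $M_1$ of $M$ is again mobile. Thus $X_1$ is again a $\Q$-factorialisation of the same $X$.

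Finally I would iterate this procedure: the resulting sequence of $M$-flops terminates by the easy termination theorem \cite[Thm.6.43]{KM98}, and the last model $\holom{\eta}{X'}{X}$ is a $\Q$-factorialisation on which the mobile part $M'$ is nef.

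The hard part is not the termination, which is routine, but the two structural claims that keep the whole process inside the class of $\Q$-factorialisations: that mobility of $M'$ forces \emph{every} $M'$-negative extremal contraction to be small, and that Benveniste's theorem upgrades such a small contraction of the Gorenstein threefold to a crepant (flopping) one, so that the flopped variety remains a small modification of the \emph{same} $X$. Once these are established, the argument runs exactly parallel to Lemma \ref{lemma-flops-nef}.
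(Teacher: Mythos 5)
Your overall skeleton is the same as the paper's: run an MMP on a $\Q$-factorialisation that stays over $X$, observe that mobility of $M_0$ confines all $M_0$-negative curves to the one-dimensional base locus so that every $M_0$-negative extremal contraction is small, and terminate by \cite[Thm.6.43]{KM98}. (A minor point: since the ray may be $K_{X_0}$-trivial, the contraction has to be produced from a klt pair such as $(X_0,\epsilon M_0)$, as the paper does, rather than from the cone theorem for $K_{X_0}$ alone.) The genuine gap is your second ``structural claim'': you apply Benveniste's theorem \cite[Thm.0]{Ben85} to $X_0$. A $\Q$-factorialisation of $X$ has \emph{canonical} Gorenstein singularities, not terminal ones, and Benveniste's theorem is a statement about \emph{terminal} Gorenstein threefolds; this is exactly why the paper only ever invokes it on the $\Q$-factorial terminalisation $V_0$ in Lemma \ref{lemma-flops-nef}, never on a $\Q$-factorialisation. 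Moreover, the extension your argument needs is simply false: canonical Gorenstein $\Q$-factorial threefolds do admit $K$-negative small extremal contractions. For a toric example, take the fan with maximal cones $\sigma_1=\langle v_1,v_2,v_3\rangle$ and $\sigma_2=\langle v_4,v_2,v_3\rangle$, where $v_1=(1,0,0)$, $v_2=(0,1,0)$, $v_3=(0,1,2)$, $v_4=(-1,1,1)$. Both cones are Gorenstein canonical (their generators lie at height one with respect to $m_1=(1,1,0)$, resp.\ $m_2=(0,1,0)$), the variety is simplicial hence $\Q$-factorial, with transverse $A_1$-singularities along the curve $C$ corresponding to the wall $\langle v_2,v_3\rangle$; the induced morphism to the affine toric variety of the convex cone $\sigma_1\cup\sigma_2$ is a small extremal contraction of $C$, and one computes $-K\cdot C=1$. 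So ``canonical Gorenstein flips'' exist, and no citation of \cite{Ben85} can close your argument.

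The missing statement --- that an $M_0$-negative extremal ray on the canonical variety $X_0$ is $K_{X_0}$-trivial --- is precisely what Step 1 of the paper's proof establishes, and it is the hard core of the lemma rather than a quotable fact. The paper first shows any $M_0$-negative curve $C$ is rigid (its deformations keep negative intersection with the mobile class, hence stay in the one-dimensional base locus). If $C\not\subset X_{0,\sing}$, one passes to the terminalisation $V_0$, which \emph{is} terminal Gorenstein and hence has lci singularities, and uses \cite[Thm.II.1.15]{Kol96} to deform the strict transform of $C$, contradicting rigidity. If $C\subset X_{0,\sing}$ --- which is exactly the situation of the toric example above, where the flipping curve lies in the singular locus --- no general theorem applies, and the paper instead derives a contradiction from the specific numerics of the degeneration: $M_0\simeq mF_0$ with $m\geq 3$, the strict transform $F_{V_0}$ is Cartier on the factorial $V_0$ (\cite[Lemma 5.1]{Kaw88}), the exceptional divisor $D_1$ over $C$ satisfies $D_1\cdot f\geq -2$ (Lemma \ref{lemma-estimate-negativity}), and together these force $-K_{V_0}\cdot(\tau_0^*\eta_0^* H)^2\geq \tfrac{9}{2}$, contradicting $-K_X\cdot H^2=4$. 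To repair your proof you would have to supply an argument equivalent to this Step 1; the parallel with Lemma \ref{lemma-flops-nef} breaks down exactly at the point where terminal is replaced by canonical.
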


\begin{proof}
We start by considering an arbitrary $\Q$-factorialisation, then we will run a MMP to obtain $\eta$. 

{\em Step 1. Let $\holom{\eta_0}{X_0}{X}$ be any $\Q$-factorialisation,
and let $M_0$ be the mobile part of $|\sO_{X_0}(1)|$. Let $C \subset X_0$ be a curve such that $M_0 \cdot C<0$. We claim that $$-K_{X_0} \cdot C=0.$$ 
}

We argue by contradiction and suppose that $-K_{X_0} \cdot C>0$ (recall that $-K_{X_0}$ is nef). 
Denote by $F_0 \subset X_0$ the strict transform, then we have $M_0 \simeq m F_0$ with $m\geq 3$ by (\eqref{M-divisible})
Since $F_0$ is mobile and $\dim X_0=3$, the curve $C$ is rigid, i.e., it does not deform in $X_0$.

Choose a $\Q$-factorial terminalisation $\holom{\tau_0}{V_0}{X_0}$, and let
$F_{V_0} \subset V_0$ be the strict transform of $F_0$. Then $F_{V_0}$ is Cartier 
by Remark \ref{remark-factorial} and mobile.

{\em 1st case. Assume that $C \not\subset X_{0,\sing}$.} Then the terminalisation $\tau_0$ is an isomorphism at the general point of $C$. Denote by $\tilde C$ the strict transform of $C$, then
$-K_{V_0} \cdot \tilde C = -K_{X_0} \cdot C>0$. The terminal Gorenstein threefold $V_0$ has hypersurface singularities \cite[Cor.5.38]{KM98}, 
so by \cite[Thm.II.1.15]{Kol96} the curve $\tilde C$ deforms in $V_0$. Since $\tilde C$ is not in the $\tau$-exceptional locus, the curve $C$ deforms in $X_0$, a contradiction.

{\em 2nd case. Assume that $C \subset X_{0,\sing}$.}
Since $\dim X_0=3$, the curve $C$ is an irreducible component of $X_{0,\sing}$. The terminal threefold $V_0$ has isolated singularities, 
so there exists a $\tau_0$-exceptional divisor $D_1$ such that $\tau(D_1)=C$. Since $C$ is in the base locus
of $|F_0|$ we may choose the exceptional divisor $D_1$ such that
$F_{V_0} \cap D_1$ surjects onto $C$. Thus if $f$ is an irreducible component of a general 
fibre of 
$$
\holom{\tau_0|_{D_1}}{D_1}{C},
$$
then $F_{V_0} \cdot f>0$. Since $F_{V_0}$ is Cartier we even have $F_{V_0} \cdot f \geq 1$. By Lemma \ref{lemma-estimate-negativity} below, $D_1 \cdot f \geq -2$. 

Let $D_1, \ldots, D_l$ be the $\tau_0$-exceptional divisors, then 
$$
\tau_0^* M_0 \sim_\Q \tau_0^* m F_0 \sim_\Q m F_{V_0} + \sum_{j=1}^l c_j D_j,
$$
with $c_j \geq 0$. By what precedes we have
$$
0 = \tau_0^* M_0 \cdot f =  m F_{V_0} \cdot f + c_1 D_1 \cdot f + \sum_{j \geq 2} c_j D_j \cdot f
\geq m - 2 c_1.
$$
Thus we obtain $c_1 \geq \frac{m}{2} \geq \frac{3}{2}$.

Since $\eta_0^* H \sim_\Q m F_0 + B_0$ we have a decomposition
$$
\tau_0^* \eta_0^* H \sim_\Q m F_{V_0} + c_1 D_1 + R
$$
where $R$ is an effective $\Q$-divisor that has no common components with $D_1$ or $F_{V_0}$.
The divisors $-K_{V_0}$ and $\tau_0^* \eta_0^* H$ being nef and $F_{V_0}$ being mobile,
this implies
$$
-K_{V_0} \cdot (\tau_0^* \eta_0^* H)^2 \geq 
-K_{V_0} \cdot (\tau_0^* \eta_0^* H) \cdot m F_{V_0} \geq 
-K_{V_0} \cdot c_1 D_1 \cdot m F_{V_0}. 
$$
By what precedes the intersection $F_{V_0} \cap D_1$ maps surjectively onto $C$, 
so $-K_{V_0} \cdot F_{V_0} \cdot D_1 \geq 1$. Since $m \geq 3$ 
and $c_1 \geq \frac{3}{2}$ we obtain that $-K_{V_0} \cdot c_1 D_1 \cdot m F_{V_0} \geq \frac{9}{2}$. Yet $\eta \circ \tau$ is crepant, so
$$
4 = -K_{X} \cdot H^2 = -K_{V_0} \cdot (\tau_0^* \eta_0^* H)^2.
$$
We have reached a contradiction
\footnote{\label{footnote-complicated}For later use observe that the argument above only uses that $C \subset \mbox{Bs}|F_0|$ is a curve such that $-K_{X_0} \cdot C>0$
and $C \subset X_{0,sing}$.},
this finishes the proof of Step 1.
 
{\em Step 2. Running the MMP.}
Let $\holom{\eta_0}{X_0}{X}$ be any $\Q$-factorialisation,
and let $M_0$ be the mobile part of $|\sO_{X_0}(1)|$. 
Choose $\epsilon>0$ such that the pair $(X_0, \epsilon M_0)$ is klt.
Recall that $X_0$ is weak Fano, so if $M_0$ is not nef, there exists
a $M_0$-negative curve $C \subset X_0$ that generates an extremal ray
in $\NE{X_0}$. By Step 1 we have $-K_{X_0} \cdot C=0$, in particular $C$ is $\eta_0$-exceptional.
Thus the contraction of the $K_{X_0}+\epsilon M_0$-negative extremal ray $\R^+ [C]$ factors through $\eta_0$. Moreover the ray is small, so we have a flip $X_0 \dashrightarrow X_1$
and a birational morphism $\holom{\eta_1}{X_1}{X}$.
Since $\eta_1$ is small, it is a $\Q$-factorialisation of $X$. 
Proceeding inductively we obtain a $K_{X_\bullet}+\epsilon M_{\bullet}$-MMP over $X$.
The MMP terminates by \cite[Thm.6.43]{KM98}, its outcome has the claimed properties.
\end{proof}

\begin{lemma} \label{lemma-estimate-negativity}
Let $X_0$ be a normal quasi-projective variety of dimension $n$ with canonical singularities, and let $\holom{\tau_0}{V_0}{X_0}$ be a $\Q$-factorial terminalisation. 
Let $E \subset V_0$ be a prime divisor such that $\tau(E) \subset X_0$ has codimension two.
Let $f \subset E$ be an irreducible component of a general fibre of $E \rightarrow \tau_0(E)$.
Then $E \cdot f \geq -2$. 
\end{lemma}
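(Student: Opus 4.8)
The plan is to reduce the statement to an adjunction computation on a smooth surface obtained by cutting $V_0$ with general hyperplanes. Fix a very ample divisor $A$ on $X_0$ and choose general members $A_1, \dots, A_{n-2} \in |A|$; set $T := A_1 \cap \dots \cap A_{n-2}$, a general complete intersection surface in $X_0$, and $S := \tau_0^{-1}(T) = \tau_0^* A_1 \cap \dots \cap \tau_0^* A_{n-2}$. Since $V_0$ has terminal singularities it is smooth in codimension two, so $\dim V_{0,\sing} \leq n-3$; as the $\tau_0^* A_i$ are globally generated, cutting $n-2$ times forces $S$ to avoid $V_{0,\sing}$, and Bertini then shows that $S$ is a smooth surface lying in the smooth locus of $V_0$. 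Because $T$ is general it meets the codimension-two locus $\tau_0(E)$ in finitely many general points, over which the fibres of $E \to \tau_0(E)$ are precisely general fibres in the sense of the statement. Thus a component $f$ of such a general fibre lies in $S$ and is contracted by $g := \tau_0|_S : S \to T$ to a point.

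Next I would compute $K_S \cdot f$. Since $S$ lies in the smooth locus of $V_0$ and is a complete intersection of the Cartier divisors $\tau_0^* A_i$, adjunction gives $K_S = (K_{V_0} + \sum_{i=1}^{n-2} \tau_0^* A_i)|_S$. The terminalisation $\tau_0$ is crepant, i.e.\ $K_{V_0} = \tau_0^* K_{X_0}$, so on $S$ we have $K_S = \tau_0^*(K_{X_0} + \sum_i A_i)|_S$. As $f$ is $\tau_0$-contracted, the projection formula yields $K_S \cdot f = \tau_0^*(K_{X_0} + \sum_i A_i) \cdot f = 0$. Adjunction on the smooth surface $S$ then gives $f^2 = 2 p_a(f) - 2 - K_S \cdot f = 2 p_a(f) - 2 \geq -2$.

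Finally I would relate $E \cdot f$ to $f^2$. Since $f \subset S$ we have $E \cdot f = (E|_S) \cdot f$. The divisor $E$ is prime and $S$ is a general complete intersection, so $E \cap S$ is reduced and $f$ occurs in $E|_S$ with multiplicity one; write $E|_S = f + \Gamma'$ with $\Gamma' \geq 0$ an effective divisor not containing $f$. Then $\Gamma' \cdot f \geq 0$ on the smooth surface $S$, and therefore
$$
E \cdot f = f^2 + \Gamma' \cdot f \geq f^2 \geq -2,
$$
as claimed.

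The main obstacle is the smoothness of $S$: everything downstream is formal adjunction, but the bound $f^2 \geq -2$ relies on $S$ being smooth along $f$, which in turn rests on the fact that a terminal variety is smooth in codimension two together with a Bertini argument for the base-point-free system $|\tau_0^* A|$. A secondary point requiring care is the identification of $f$ with a genuine component of a general fibre, which is guaranteed by the generality of $T$, so that $T \cap \tau_0(E)$ consists of general points of $\tau_0(E)$. The crepancy $K_{V_0} = \tau_0^* K_{X_0}$ and the reducedness of $E \cap S$ are then standard.
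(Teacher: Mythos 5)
Your proof is correct, and its skeleton is the same as the paper's: cut down to a surface by $n-2$ general hyperplane sections, check that this surface is smooth along $f$, and conclude via $E \cdot f = f^2 + (\text{effective})\cdot f \geq f^2$. (In fact your surface and the paper's coincide: for general $A_i$ the pullbacks $\tau_0^* A_i$ acquire no exceptional components, so $\tau_0^{-1}(T)$ is exactly the strict transform $S_V$ used in the paper.) Where you genuinely diverge is in the two key inputs. The paper justifies smoothness and the bound on $f^2$ through singularity theory: a general complete intersection surface $S \subset X_0$ has canonical, i.e.\ ADE, singularities, its strict transform in the terminal variety $V_0$ is a terminal surface, hence smooth, and since $\tau_0$ is crepant the induced map $S_V \to S$ is a crepant, hence minimal, resolution of ADE singularities — so every $\tau_0$-exceptional curve on $S_V$ is a $(-2)$-curve and $f^2 = -2$ exactly. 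You instead get smoothness from classical Bertini combined with the fact that terminal varieties are regular in codimension two, and you bound $f^2$ by pure adjunction: crepancy plus the projection formula give $K_S \cdot f = 0$, whence $f^2 = 2p_a(f) - 2 \geq -2$. Your route is more self-contained — it avoids both the Bertini-type theorem for discrepancies (that general sections of canonical resp.\ terminal varieties stay canonical resp.\ terminal) and the du Val classification — at the price of obtaining only the inequality rather than the exact value $-2$, which is all the lemma needs. One point you were right to make explicit, and which the paper leaves implicit in writing $E \cap S_V = f + R$, is that $f$ must occur with multiplicity one in $E|_S$ (otherwise one only gets $E \cdot f \geq m f^2$); as you say, this follows in characteristic zero from the reducedness of general fibres of $E \rightarrow \tau_0(E)$.
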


\begin{proof}
Let $S \subset X_0$ be a general complete intersection of $n-2$ hypersurfaces, and denote by $S_V \subset V_0$
its strict transform. Then $S$ has ADE singularities and 
$S_{V}$ is a terminal surface, hence smooth. Since $\tau_0$ is crepant, so is $\tau_0|_{S_{V}}$.
Thus $S_{V} \rightarrow S$ is the minimal resolution of an ADE-singularity and every exceptional curve has self-intersection $-2$.

Now note that
$E \cap S_{V}  = f + R$ where $R$ is an effective cycle  
on $S_{V}$ that does not contain $f$. Thus we have
$$
E \cdot f = (f+R) \cdot f \geq f^2=-2.
$$
\end{proof}

\begin{corollary} \label{corollary-fibre-space}
Let $X$ be a canonical  degeneration of $\PP^3$ such that $\dim Z = 1$.
Then there exists a $\Q$-factorialisation $\holom{\eta}{X'}{X}$ such that
$M' \simeq m F'$ with $|F'|$ a basepoint free pencil on $X'$.
\end{corollary}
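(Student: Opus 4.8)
The plan is to work on the $\Q$-factorialisation $\eta\colon X' \to X$ supplied by Lemma \ref{lemma-fibre-space}, on which the mobile part $M'$ is nef, and to promote nefness to semiampleness using the base-point-free theorem together with the weak Fano property of $X'$.

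First I would collect the data already available on this $X'$: one has $M' \simeq m F'$ with $m \geq 3$, the equality $h^0(X', \sO_{X'}(M')) = m+1$, and the image $Z$ of $\varphi_{|M'|}$ is a curve. Since $\eta$ is small and $-K_X$ is ample, the divisor $-K_{X'} = \eta^*(-K_X)$ is nef and big, so $X'$ is weak Fano with canonical, hence klt, singularities.

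The crucial step is to apply the base-point-free theorem to $M'$. Picking $p$ with $pM'$ Cartier, the divisor $a\,pM' - K_{X'}$ is nef and big for every $a>0$, being the sum of the nef divisor $a\,pM'$ and the nef and big divisor $-K_{X'}$; hence $pM'$, and therefore $M'$, is semiample. The resulting morphism (the Iitaka fibration) $\psi\colon X' \to W$ has connected fibres and satisfies $M' \sim_\Q \psi^* A$ for an ample $\Q$-divisor $A$ on $W$. Because $|M'|$ is composed with a pencil, the Iitaka dimension of $M'$ equals $\dim Z = 1$, so $W$ is a normal projective curve; and since $X'$ is rationally connected, $W \simeq \PP^1$.

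It then remains to identify $F'$ with the fibre class. The general fibre of $\psi$ is a prime divisor, and comparing $M' \simeq m F'$ with $M' \sim_\Q \psi^* A$ forces $F' = \psi^* \sO_{\PP^1}(1)$ and $\deg A = m$. Consequently $\sO_{X'}(F') \simeq \psi^* \sO_{\PP^1}(1)$, whence $h^0(X', \sO_{X'}(F')) = 2$ and $|F'| = \psi^* |\sO_{\PP^1}(1)|$ is a basepoint-free pencil satisfying $M' \simeq m F'$, as claimed. The main obstacle I expect is the promotion from nef to semiample, namely checking the base-point-free hypotheses cleanly while $M'$ is only $\Q$-Cartier; once semiampleness is in hand, the remaining identification of the pencil over $\PP^1$ is routine.
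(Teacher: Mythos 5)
Your use of the base-point-free theorem is fine as far as it goes: on the model $X'$ produced by Lemma \ref{lemma-fibre-space} the divisor $M'$ is nef, $X'$ is a klt weak Fano variety, so for a Cartier multiple $pM'$ the divisor $apM'-K_{X'}$ is nef and big and $M'$ is indeed semiample. The genuine gap is the sentence ``because $|M'|$ is composed with a pencil, the Iitaka dimension of $M'$ equals $\dim Z=1$''. This is not a valid inference. The dimension of the image of the semiample fibration equals the Iitaka (equivalently, numerical) dimension of $M'$, which is computed from the linear systems $|kM'|$ for $k\gg 0$, and these may have strictly larger images than $|M'|$ itself: a complete linear system that is a pencil can belong to a divisor of Iitaka dimension $2$ or $3$. (Already for surfaces: a minimal surface of general type with $p_g=2$ and $K^2=1$ has canonical system a pencil, while $K$ is nef and big.) So your argument assumes exactly what the corollary asserts, namely that the nef pencil $|F'|$ satisfies $F'^2=0$, equivalently $\Bs|F'|=\emptyset$; knowing only $\dim Z=1$ gives the inequality $\kappa(M')\geq 1$, not the equality.

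Excluding numerical dimension $2$ and $3$ is where all the work in the paper's proof lies, and it is geometric rather than formal. The paper argues by contradiction: if $\Bs|F'|\neq\emptyset$, it contains a curve $C$ with $-K_{X'}\cdot C>0$ and $C\not\subset X'_{\sing}$ (by the argument recorded in the proof of Lemma \ref{lemma-fibre-space}); passing to a $\Q$-factorial terminalisation $V$, chosen via Lemma \ref{lemma-flops-nef} so that $M_V\simeq mF_V$ is nef, the base locus of $|F_V|$ remains non-empty, hence $F_V^2\neq 0$ and $M_V$ has numerical dimension at least two. If the numerical dimension is exactly two, the semiample fibration $\holom{\psi}{V}{T}$ maps onto a surface; factoriality of the terminal Gorenstein model (Remark \ref{remark-factorial}) and the relative base-point-free theorem give $F_V\simeq \psi^* F_T$ with $F_T$ an ample \emph{Cartier} divisor, whence $-K_V\cdot M_V^2= m^2\, F_T^2\,(-K_V\cdot f)\geq 9\cdot 1\cdot 2=18$, contradicting $-K_V\cdot(\tau^*\eta^* H)^2=-K_X\cdot H^2=4$. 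The case of numerical dimension three, i.e. $M_V$ big, is excluded by Proposition \ref{proposition-not-big}, itself a substantial result resting on adjunction and Kobayashi--Ochiai. None of this appears in your proposal; without it, the fibration you construct could a priori have a surface as image or be generically finite, in which case there is no basepoint-free pencil $|F'|$ at all.
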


\begin{proof}
By Lemma \ref{lemma-fibre-space} we can choose $\eta$ such that $M'$ and hence $F'$ is nef.
By construction $h^0(X', \sO_{X'}(F'))=2$, 
so $\mbox{Bs} |F'|$ is a complete intersection $D_1 \cap D_2$ with $D_i \in |F'|$ general elements.
Arguing by contradiction we have that $D_1 \cap D_2$ is not empty.

{\em Step 1. Reduction to terminalisation.}
Since $F'$ is nef, the 1-cycle $D_1 \cdot D_2$ is mobile and not contracted
by $\eta$. Thus we can find a curve $C \subset \mbox{Bs}|F'|$ such that $-K_{X'} \cdot C>0$.
By the proof of Lemma \ref{lemma-fibre-space} (cf. Footnote \ref{footnote-complicated}) we know that $C \not\subset X'_{\sing}$.

Let $\holom{\tau}{V}{X'}$ be a $\Q$-factorial terminalisation, and let $F_V \subset V$ be the strict transform of $F'$. Since $\tau$ is an isomorphism in the generic point of $C$, the base locus of $|F_V|$ is not empty. By Lemma \ref{lemma-flops-nef} we can assume (up to replacing $V$ by another terminalisation) that $M_V \simeq m F_V$ is nef. 
Since $\mbox{Bs}|F_V| \neq \emptyset$ we have $F_V^2 \neq 0$ and $M_V$ has numerical dimension at least two.

{\em Step 2. Assume that $M_V$ has numerical dimension two.} 
Since $M_V$ is nef and $V$ is weak Fano, the divisor $M_V$ is semiample, i.e. some multiple
$|k M_V|$ defines a fibration 
$$
\holom{\psi}{V}{T}
$$
onto a surface $T$.
Recall that $F_V$ is a Cartier divisor.
Thus $F_V$ being $\psi$-trivial and $-K_V$ being $\psi$-nef and big we know by the relative basepoint free theorem that  $F_V \simeq \psi^* F_T$  with $F_T$ an ample {\em Cartier} divisor on $T$.
We have 
\begin{equation}
\label{eq2} M_V^2 = m^2 \cdot F_T^2 \cdot [f]
\end{equation}
with $f$ a general $\psi$-fibre and
$$
\tau^* \eta^* H \sim_\Q M_V + R
$$
with $R$ an effective $\Q$-divisor that has no common component with $M_V$. 
Since $-K_V$ is nef, we deduce that
$$
-K_V \cdot (\tau^* \eta^* H)^2 \geq -K_V \cdot M_V^2 = -K_V \cdot (m^2 \cdot F_T^2 \cdot [f]).
$$
Since $-K_V \cdot f=2$ and $F_T^2 \geq 1$ for every ample Cartier divisor, we obtain that
 $-K_V \cdot (\tau^* \eta^* H)^2 \geq 18$.
 Yet $\eta \circ \tau$ is crepant,
 so
$$
4 = -K_X \cdot H^2 = -K_V \cdot (\tau^* \eta^* H)^2 
$$
gives the contradiction.

{\em Step 3. Conclusion.}
By Step 1 the divisor $M_V$ has numerical dimension at least two, and Step 2 excludes the possibility that it is equal to two. Thus $M_V$ is nef and big, in contradiction to Proposition \ref{proposition-not-big}.
\end{proof}

We can finally describe the geometry of $X$:

\begin{proposition} \label{proposition-classify-Xprime}
Let $X$ be a canonical  degeneration of $\PP^3$ such that $\dim Z = 1$.
Let $\holom{\eta}{X'}{X}$ be a $\Q$-factorialisation as in Corollary \ref{corollary-fibre-space}, and denote by
$$
\holom{\psi}{X'}{\PP^1}
$$
be the fibration defined by the linear system $|F'|$. Then the general fibre $F'$ is a del Pezzo surface
of degree four with canonical singularities such that 
$$
-K_{F'} \simeq 4 (B' \cap F').
$$
In particular the fixed part $B$ is not zero.
Moreover we have
$$
-K_X^2 \cdot F = 4, \qquad -K_X^2 \cdot B = 4, \qquad m=3
$$
\end{proposition}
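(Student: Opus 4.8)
The plan is to restrict everything to a general fibre $F'$ of $\psi$ and to read off all the numerical data from adjunction together with the single divisibility relation $-K_{F'} \simeq 4(B'\cap F')$. First I would record the consequences of $\eta$ being small: it is crepant, so $-K_{X'} \sim_\Q \eta^*(-K_X) \sim_\Q 4H' \sim_\Q 4(mF'+B')$ with $m \geq 3$ by \eqref{M-divisible}, and $H'$ is nef and big (the birational pullback of the ample $H$). A general member $F'$ of the basepoint-free pencil $|F'|$ is, by Bertini and the fact that $X'$ is Gorenstein with canonical singularities, a normal Gorenstein surface with canonical (hence du Val) singularities; moreover $F'$ is Cartier and $F'|_{F'} \sim_\Q 0$. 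Adjunction then gives $K_{F'} \simeq (K_{X'}+F')|_{F'} \simeq K_{X'}|_{F'}$, so
$$ -K_{F'} \sim_\Q -K_{X'}|_{F'} \sim_\Q 4(mF'+B')|_{F'} \sim_\Q 4(B'\cap F'), $$
which is the asserted formula; set $L := B'\cap F'$.

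Next I would show that $F'$ is a genuine del Pezzo surface. Since $-K_{X'}=4H'$ is big, writing it as ample plus effective and restricting to a general fibre (which is not contained in the effective part, as the pencil covers $X'$) shows that $-K_{F'}$ is big; it is also nef, being the restriction of a nef divisor. Hence on the du Val surface $F'$ the Cartier divisor $-K_{F'}$ is nef and big, and therefore ample, the only curves it could annihilate being the $(-2)$-curves, which are contracted on $F'$ itself. Thus $F'$ is a Gorenstein del Pezzo with canonical singularities, and in particular $L = \tfrac14(-K_{F'})\neq 0$; this already gives $B'\neq 0$, whence $B=\eta_* B'\neq 0$.

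Finally I would pin down the degree and $m$. Put $d:=(-K_{F'})^2$. By the projection formula, $\eta$ being small and crepant, $(-K_X)^2\cdot F = (-K_{X'})^2\cdot F' = d$, and likewise $(-K_X)^2\cdot B = (-K_{X'})^2\cdot B' = 16 - md$, using $(-K_{X'})^2\cdot H' = 16(H')^3 = 16$ together with $H'\sim_\Q mF'+B'$. The crucial observation is that $-K_{F'}$ is an ample Cartier divisor and $L$ an effective curve, so $-K_{F'}\cdot L = 4L^2 = d/4$ is a positive integer; hence $4 \mid d$, and the del Pezzo bound $d\leq 9$ forces $d\in\{4,8\}$. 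Combining this with the relation $md + (-K_X)^2\cdot B = 16$, in which $(-K_X)^2\cdot B>0$ because $B\neq 0$ and $-K_X$ is ample, and with $m\geq 3$: the value $d=8$ would give $md\geq 24>16$, so $d=4$; then $4m<16$ forces $m=3$ and $(-K_X)^2\cdot B = 16-12 = 4$. This yields the del Pezzo degree, all three intersection numbers, and $m=3$.

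The main obstacle is the middle step: verifying rigorously that the general fibre is a bona fide del Pezzo surface, i.e. that it is normal with du Val singularities, Gorenstein, and that $-K_{F'}$ is not merely nef but big (and hence ample). Once the del Pezzo structure and the divisibility $-K_{F'}\sim_\Q 4L$ are secured, the integrality of $-K_{F'}\cdot L = d/4$ together with the elementary arithmetic of $md+(-K_X)^2\cdot B = 16$ forces $d=4$ and $m=3$ with essentially no further work.
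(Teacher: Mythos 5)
Your overall route---adjunction to get $-K_{F'} \simeq 4(B'\cap F')$, divisibility of $(-K_{F'})^2$ by four via the Cartier divisor $-K_{F'}$ paired with the curve $L$, the del Pezzo bound $(-K_{F'})^2 \leq 9$, and the arithmetic $16 = m(-K_{F'})^2 + (-K_X)^2\cdot B$ with $m \geq 3$ and $(-K_X)^2 \cdot B > 0$---is exactly the paper's, and that part is correct. The genuine gap is in the step you yourself single out as the main obstacle: the ampleness of $-K_{F'}$. You claim that a nef and big anticanonical Cartier divisor on a surface with canonical (du Val) singularities is automatically ample, ``the only curves it could annihilate being the $(-2)$-curves, which are contracted on $F'$ itself.'' This inference is false: a smooth weak del Pezzo surface (for instance $\PP^2$ blown up at two infinitely near points) has canonical singularities and $-K$ nef and big, yet carries honest $(-2)$-curves on which $-K$ vanishes. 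Nothing in your argument forces the $K$-trivial curves to be contracted on $F'$; that would mean $F'$ coincides with its own anticanonical model, which is precisely what has to be proved.

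The missing ingredient, which is how the paper proves ampleness, is global rather than intrinsic to $F'$: since $\eta$ is small and $-K_{X'} = \eta^*(-K_X)$ with $-K_X$ ample, a curve $C \subset X'$ with $-K_{X'}\cdot C = 0$ is exactly an $\eta$-exceptional curve, and there are only finitely many of these. Each such curve is either contained in a single fibre of $\psi$ or dominates $\PP^1$, so a general member $F'$ of the basepoint-free pencil contains none of them. Hence $-K_{F'}$ is positive on every curve of $F'$, and together with $(-K_{F'})^2>0$ (your bigness argument, or Remark \ref{remark-rather-big}) the Nakai--Moishezon criterion gives ampleness. Note that the purely numerical conclusions of your arithmetic ($(-K_{F'})^2=4$, $m=3$, $(-K_X)^2\cdot B=4$) would survive with $-K_{F'}$ merely nef and big, but the proposition asserts---and Proposition \ref{prop-identify-F-prime} later uses, via the classification in Proposition \ref{degree-four-cases}---that $F'$ is a genuine del Pezzo surface, so the ampleness step cannot be dispensed with.
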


\begin{proof}
We have $H \simeq m F + B$ and hence
$$
-K_{X'} \simeq 4 H' \simeq 4 m F' + 4 B'.
$$
By adjunction we obtain that $-K_{F'} \simeq 4 (B' \cap F')$. Since $X'$ only contains finitely
many $-K$-trivial curves and $F'$ is general, the divisor $-K_{F'}$ is ample, so $F'$ is a del Pezzo surface with canonical singularities. 
In particular we have
$(-K_{F'})^2 \leq 9$. Since $-K_{F'}$ is Cartier, we obtain that
$(-K_{F'})^2$ is either $4$ or $8$. Yet in the latter case
$$
16 = -K_{X'}^2 \cdot H' \geq
m (-K_{X'}^2) \cdot F' = 8 m \geq 24
$$ 
is a contradiction. 

In the case $(-K_{F'})^2=4$ the same computation and $-K_{X'}^2 \cdot B'>0$ yields
that $m=3$ and $-K_{X'}^2 \cdot B'=4$.
\end{proof}

We can actually be more precise:

\begin{proposition} \label{prop-identify-F-prime}
The surface $F'$ in Proposition \ref{proposition-classify-Xprime} has a unique singular point,
it is of type $D_5$.

If the singularity is of type $D_5$, we do not have 
$B=B_1+B_2$ with $2 B_2 \simeq F$.
\end{proposition}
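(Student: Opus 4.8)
The plan is to treat the two assertions separately. Throughout I work with a $\Q$-factorialisation $\holom{\eta}{X'}{X}$ and the fibration $\holom{\psi}{X'}{\PP^1}$ from Corollary \ref{corollary-fibre-space}, writing $F'$ also for a general fibre. By Proposition \ref{proposition-classify-Xprime} the surface $F'$ is a del Pezzo surface of degree four with canonical, hence Du Val, singularities, and $-K_{F'} \simeq 4L$ with $L := B' \cap F'$ an effective ample Weil divisor; equivalently $-K_{F'}$ is divisible by $4$ in $\Cl(F')$ and $L^2 = \tfrac14$. For the first assertion I would pass to the minimal resolution $\holom{\pi}{S}{F'}$, a weak del Pezzo surface of degree four with $-K_S \simeq \pi^*(-K_{F'})$. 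Here $\pic(S) \cong \Z^{1,5}$, the orthogonal complement $K_S^{\perp}$ is the root lattice $D_5$, and the $(-2)$-curves contracted by $\pi$ generate a sub-root-system $R \subseteq D_5$ whose Dynkin type equals the singularity type of $F'$, with $\Cl(F') \cong \pic(S)/R$.

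The decisive step is a divisibility computation in this lattice. Projecting onto the free quotient $\pic(S)/D_5 \cong \Z$ one checks that $-K_S$ maps to $4$, so divisibility by $4$ imposes no condition on the free part; the genuine obstruction lies in the finite group $D_5/R$, and a direct check shows that $-K_{F'} \in 4\,\Cl(F')$ holds precisely when $R = D_5$. (Alternatively one may quote the classification of Du Val del Pezzo surfaces of degree four: the anticanonical class is $4$-divisible in the class group only for a single $D_5$ point.) Since $D_5$ is an irreducible root system, the corresponding $(-2)$-curves form a connected configuration contracted to a single point; hence $F'$ has a unique singular point, and it is of type $D_5$.

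For the second assertion, assume for contradiction that $B = B_1 + B_2$ with $B_1, B_2$ effective and $2B_2 \simeq F$. By the first part $\Cl(F') \cong \Z$, generated by the ample class $L$, and in particular torsion free. Pulling the decomposition back along $\eta$ and restricting to a general fibre $F'$, the relation $2B_2 \simeq F$ gives $2\,(B_2' \cap F') \simeq F|_{F'} \simeq 0$; as $\Cl(F')$ is torsion free and $B_2' \cap F'$ is effective this forces $B_2' \cap F' = 0$, so $B_2'$ is $\psi$-vertical. Since $|F'|$ is a basepoint free pencil (Corollary \ref{corollary-fibre-space}) whose members are exactly the fibres of $\psi$, the relation $2B_2' \simeq F'$ says that some scheme-theoretic fibre is $\psi^{-1}(p_0) = 2B_2'$: the fibration $\psi$ has a double fibre.

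It remains to exclude this double fibre, which I expect to be the main difficulty. Since $H'$ is $\psi$-ample, so is $-K_{X'} \simeq 4H'$; as $X'$ is Gorenstein (Proposition \ref{proposition-dim3-gorenstein}), $W := \psi^{-1}(p_0) = 2B_2'$ is a Gorenstein del Pezzo surface of degree $(-K_{X'})^2 \cdot F' = 4$, and adjunction for the Cartier divisor $W$ together with $F'|_W = 0$ yields $-K_W \simeq -K_{X'}|_W \simeq 4\,(B'|_W)$, so $-K_W$ is divisible by $4$. The cleanest route to a contradiction is to restrict once more to the reduced half-fibre $B_2'$: there $-K_{X'}|_{B_2'} \simeq 4\,(B'|_{B_2'})$ is an ample class divisible by $4$ of self-intersection $(-K_{X'})^2 \cdot B_2' = \tfrac12(-K_{X'})^2 \cdot W = 2$. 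Modulo the adjunction different this is the anticanonical class of a degree-two del Pezzo surface, whose minimal resolution is a weak del Pezzo with $K^{\perp} \cong E_7$; since $-K$ maps to $2$ in the free quotient $\pic/E_7 \cong \Z$, which is not divisible by $4$, no such divisibility can occur. The technical heart is precisely to rule out the different: one must verify that $B_2'$ is a normal surface with canonical singularities and that the adjunction $-K_{B_2'} = (-K_{X'}-B_2')|_{B_2'}-\mathrm{Diff}$ carries $\mathrm{Diff}=0$ (so that $-K_{B_2'} \equiv -K_{X'}|_{B_2'}$ and the free-quotient bound applies), which I would extract from inversion of adjunction for the $\psi$-ample anticanonical class on the canonical threefold $X'$; equivalently one extends the divisibility analysis of the first part to the a priori non-reduced fibre $W$ and observes that it must then be reduced, contradicting $W=2B_2'$.
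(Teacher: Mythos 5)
Your proof of the first assertion hinges on the claim that, for a Du Val del Pezzo surface $S$ of degree four with $-K_S \simeq 4l$, $l$ an effective (ample) curve, the sublattice $R$ spanned by the $(-2)$-curves must be all of $D_5$. This claim is false, and the paper's Proposition \ref{degree-four-cases} is precisely the statement that it fails: there are three possible singularity configurations, not one. Concretely, blow up $\PP^2$ in a collinear infinitely near chain $p_1 \prec p_2 \prec p_3$ and in a point $p_4$ with $p_5 \succ p_4$ along a second line through $p_1$. The $(-2)$-curves are then $e_2-e_3,\ e_1-e_2,\ h-e_1-e_4-e_5$ (an $A_3$-chain) together with $h-e_1-e_2-e_3$ and $e_4-e_5$ (two disjoint $A_1$'s); the anticanonical model is the rank-one quartic del Pezzo of type $2A_1+A_3$ from \cite[Table 1]{MZ88}. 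In $\Cl(S)=\Z^{1,5}/R$ one has $e_1\equiv e_2\equiv e_3=:x$, $e_4\equiv e_5=:y$, $h\equiv 3x$ and $h\equiv x+2y$, hence
$$
\Cl(S)\cong \Z\,x\oplus(\Z/2)\,(x-y),\qquad -K_S\equiv 9x-3x-2y=4x+2(x-y)=4x,
$$
so $-K_S=4l$ with $l$ the image of the $(-1)$-curve $e_3$, $l^2=\tfrac14$, $l$ ample, and $l$ passing through one $A_1$-point and the $A_3$-point --- exactly case b) of Proposition \ref{degree-four-cases} (the $\rho=2$ case c) works out similarly). So $4$-divisibility in the class group does not single out $D_5$, and no argument carried out on the abstract fibre $F'$ alone can: the three cases differ only in how $l$ sits relative to the singular points (differend $\tfrac54$ concentrated at one point versus $\tfrac12+\tfrac34$ at two points, cf.\ Lemma \ref{lemma-differend}). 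That local differend data is what the paper feeds into a genuinely \emph{global} argument: it passes to the relative anticanonical model $\bar X$, normalises the horizontal component $\bar B_1$ of the fixed divisor (a Hirzebruch surface $\mathbb F_d$, since $-K_{\bar X}$ has degree one on the fibres of $\bar B_1\to\PP^1$), uses subadjunction --- where the two-section differend of cases b), c) yields $\Delta\sim_\Q\frac54 C_0+\gamma l$ with $\gamma\geq\frac d2$ --- to show $\bar B+\frac32\bar F$ is pseudoeffective on its support and $\bar B+\frac74\bar F$ is nef, and then derives the contradiction $1=\bar H^3\geq \frac{17}{4}\,\bar F\cdot\bar B^2=\frac{17}{16}$. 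Your proposal contains no global input for the first statement, so it cannot be repaired within its own framework.

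For the second assertion, your reduction to a double $\psi$-fibre $\psi^{-1}(p_0)=2B_2'$ is fine (it only uses $\Cl(F')\cong\Z$ for the $D_5$-surface), but the exclusion of that double fibre --- which you yourself flag as the technical heart --- is left open, and the sketched route is problematic: for the half-fibre of a double fibre, $B_2'|_{B_2'}$ is merely $2$-torsion in $\Cl(B_2')$, the different in $K_{B_2'}+\mathrm{Diff}=(K_{X'}+B_2')|_{B_2'}$ has no reason to vanish (double fibres are exactly the situation where it is nonzero), and neither normality nor Du-Val-ness of $B_2'$ is known; so the degree-two divisibility obstruction, sound as a pure lattice statement, does not apply as stated. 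The paper avoids analysing the double fibre altogether: in Step 3 of its proof it reruns the same global computation, noting that $2B_2\simeq F$ forces $e=\nu^*(-K_{\bar X})^2=2$ on $\tilde B_1$ and that the differend estimate survives with the weaker bound $\gamma\geq 0$, again contradicting $\bar H^3=1$. This is why both statements are proved in the paper by one intersection-theoretic argument on the threefold rather than by fibrewise classification.
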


The second statement is a bit obscure but will be needed later to exclude an annoying case.
The proof of this statement needs some preparation:

\begin{lemma} \label{lemma-differend}
Let $S$ be a del Pezzo surface of degree four with canonical singularities
such that $-K_S \simeq 4l$ with $l$ a curve.
Then the differend of $l$ has degree~$\frac{5}{4}$.
\end{lemma}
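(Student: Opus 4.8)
The plan is to read off the degree of the differend directly from the adjunction formula, once the two intersection numbers $l^2$ and $(K_S+l)\cdot l$ are computed. First I would note that, $S$ being klt, $K_S$ is $\Q$-Cartier, so $l \sim_\Q -\tfrac14 K_S$ is $\Q$-Cartier as well and all intersection numbers below make sense. From $-K_S \simeq 4l$ together with $(-K_S)^2 = 4$ (a del Pezzo surface of degree four) I obtain $16\,l^2 = 4$, hence $l^2 = \tfrac14$ and $-K_S \cdot l = 4\,l^2 = 1$. Since $K_S + l \sim_\Q -3l$, this gives $(K_S+l)\cdot l = -3\,l^2 = -\tfrac34$.

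Next I would apply the subadjunction formula of Fact \ref{fact-subadjunction} to the prime divisor $l \subset S$ (the ambient surface $S$ is $\Q$-factorial, canonical surface singularities being rational and hence of finite class group): denoting by $\holom{\nu}{\tilde l}{l}$ the normalisation and by $\Delta$ the differend, we have
\[ K_{\tilde l} + \Delta \sim_\Q \nu^*(K_S + l), \qquad \Delta \geq 0. \]
Taking degrees and using $\deg \nu^*(K_S+l) = (K_S+l)\cdot l = -\tfrac34$, this reads
\[ 2\,g(\tilde l) - 2 + \deg \Delta = -\tfrac34. \]

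Finally I would use the effectivity of the differend to pin down $\tilde l$. As $\deg \Delta \geq 0$, the last identity forces $2\,g(\tilde l) - 2 \leq -\tfrac34$, i.e.\ $g(\tilde l) \leq \tfrac58$; since the genus is a non-negative integer this gives $g(\tilde l) = 0$ and $\tilde l \simeq \PP^1$ (in agreement with $l$ being a line for the anticanonical embedding, as $-K_S \cdot l = 1$). Substituting $\deg K_{\tilde l} = -2$ then yields $\deg \Delta = -\tfrac34 + 2 = \tfrac54$, as asserted. I do not expect a genuine obstacle here: the only two points needing a word of justification are the $\Q$-Cartierness of $l$ (so that the differend is well defined) and the integrality of $g(\tilde l)$, both immediate; the proof is really just the numerology $l^2 = \tfrac14$ combined with the effectivity of the differend.
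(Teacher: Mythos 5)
Your proof is correct and essentially coincides with the paper's: both reduce to the numerology $l^2=\tfrac14$, $(K_S+l)\cdot l=-\tfrac34$ and then read off the degree of the differend from the (sub)adjunction formula. The only, harmless, difference is how $\deg K = -2$ is justified: the paper observes that $l$ is smooth and rational because it is a line in the anticanonical embedding, whereas you work on the normalisation $\tilde l$ and deduce $g(\tilde l)=0$ from effectivity of the differend, which has the small advantage of not invoking very ampleness of $-K_S$.
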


\begin{proof}
We have $-K_S \cdot l=1$ and $l^2=\frac{1}{4}$. Since $l$ is smooth (it is a line in the anticanonical embedding), we have
$K_l + \Delta \sim_\Q (K_S+l)|_l.$
Since $\deg K_l=-2$ the claim follows.
\end{proof}

The following result is an elementary exercise in birational geometry\footnote{Cf. \url{https://math.univ-cotedazur.fr/~hoering/students/fang.pdf}, Thm.3.2 for a detailed proof.}:

\begin{lemma} \label{lemma-add-Ak}
Let $S$ be a normal projective surface with canonical singularities, and let 
$\holom{\mu}{S}{S'}$ be an elementary birational Mori contraction with exceptional divisor $E$.
Then $S'$ is smooth in $\mu(E)$ and  there is at most one singular point of $S$ on $E$, this point being of type $A_k$. We have $(K_{S'})^2 = K_S^2+1+k$.
\end{lemma}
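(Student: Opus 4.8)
The plan is to pass to the minimal resolution and to analyze, stage by stage, the configuration of curves contracted by the composite map, using negative-definiteness at every step.

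First I would take the minimal resolution $\holom{\pi}{\tilde S}{S}$. Since $S$ has canonical, hence Du Val, surface singularities, $\pi$ is crepant, $K_{\tilde S} \simeq \pi^* K_S$, and $\mathrm{Exc}(\pi)$ is a disjoint union of ADE configurations of $(-2)$-curves. Because $S$ is $\Q$-factorial and the contraction is elementary, the exceptional divisor $E$ of $\mu$ is a single prime curve with $E^2<0$ and $K_S \cdot E<0$, and $S'$ is again a normal $\Q$-factorial surface. Set $f:=\mu\circ\pi$ and $p:=\mu(E)$. The fibre $\Sigma := \fibre{f}{p}$ consists of the strict transform $\tilde E$ of $E$ together with those $\pi$-exceptional $(-2)$-curves $A_j$ lying over singular points of $S$ on $E$; being contracted to the normal point $p$, its intersection matrix is negative definite. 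By the projection formula $K_{\tilde S}\cdot \tilde E = K_S\cdot E<0$, while $K_{\tilde S}\cdot A_j=0$ and $A_j^2=-2$.

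Next I would show that $\tilde E$ is a $(-1)$-curve. Negative-definiteness gives $\tilde E^2\le -1$; adjunction on the smooth surface $\tilde S$ reads $K_{\tilde S}\cdot \tilde E = 2\,p_a(\tilde E)-2-\tilde E^2$, and combining this with the integrality of intersection numbers and with $K_{\tilde S}\cdot\tilde E<0$ pins down $\tilde E^2=-1$, $p_a(\tilde E)=0$, and $K_{\tilde S}\cdot\tilde E=-1$. I would then blow down $\tilde E$ and track the images of the $A_j$: a curve $A_j$ meeting $\tilde E$ with multiplicity $m_j$ acquires self-intersection $-2+m_j^2$, so negative-definiteness of the contracted configuration forces $m_j\le 1$. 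Moreover, if $\tilde E$ met two distinct curves $A_a,A_b$ (necessarily with $m_a=m_b=1$), then after contracting $\tilde E$ their images would be two $(-1)$-curves meeting each other, and contracting one of them would produce a curve of self-intersection $0$, contradicting negative-definiteness. Hence $\tilde E$ meets $\mathrm{Exc}(\pi)$ in a single curve with multiplicity one, or in none at all; in particular $E$ passes through \emph{at most one} singular point of $S$, the case of no singular point being $k=0$ with $S$ smooth along $E$.

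The heart of the argument, and the step I expect to be most delicate, is the combinatorial analysis showing that the single ADE configuration met by $E$ must be of type $A_k$ with $\tilde E$ attached at an extremity. I would iterate the contraction: after blowing down $\tilde E$, the unique curve it met becomes a $(-1)$-curve, which one contracts in turn, and so on. For a chain $A_k$ with $\tilde E$ at one end this cascade runs cleanly, each successive $(-2)$-curve becoming a $(-1)$-curve, terminating at a \emph{smooth} point after exactly $1+k$ blow-downs; this shows both that the point is $A_k$ and that $S'$ is smooth at $p$. For a $D_n$ or $E_n$ configuration, or for an $A_k$ with $\tilde E$ attached at an interior vertex, the cascade reaches a branch vertex (respectively the two ends straddling the attachment point) and, after contracting the curves feeding into it, produces two $(-1)$-curves meeting each other, hence a curve of self-intersection $\ge 0$ — impossible in a configuration contracting to a normal point. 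This rules out every case but $A_k$ attached at an end.

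Finally I would assemble the numerical statement. Factoring $f$ through the minimal resolution $\holom{\rho}{W}{S'}$ as $\tilde S \xrightarrow{g} W \xrightarrow{\rho} S'$, the morphism $g$ contracts precisely the $1+k$ curves of $\Sigma$: over the now smooth point $p$ the map $\rho$ is an isomorphism, so all of $\Sigma$ is contracted by $g$, whereas the $(-2)$-curves over the remaining Du Val points of $S'$ belong to $W$ and are mapped isomorphically. Thus $g$ is a sequence of $1+k$ blow-downs of $(-1)$-curves, each raising the self-intersection of the canonical class by one, so $K_W^2 = K_{\tilde S}^2 + (1+k)$. Using $K_{\tilde S}^2 = (\pi^* K_S)^2 = K_S^2$ from crepancy of $\pi$, and $K_{S'}^2 = K_W^2$ from crepancy of the minimal resolution $\rho$ of the Du Val surface $S'$, I obtain $(K_{S'})^2 = K_S^2 + 1 + k$, as required.
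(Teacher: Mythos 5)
Your proof is correct. The paper itself gives no argument for this lemma---it is labelled ``an elementary exercise in birational geometry'' with a footnote pointing to an external note for the details---and your proof (pass to the crepant minimal resolution, use adjunction and negative definiteness to show the strict transform of $E$ is a $(-1)$-curve meeting the exceptional locus in at most one curve transversally, then run the cascade of blow-downs through the tree-shaped dual graph so that the contraction terminates at a smooth point exactly when the configuration is an $A_k$-chain attached at an end, every other case producing two intersecting $(-1)$-curves inside a configuration that must be negative definite, and finally compare $(K_{S'})^2$ with $K_S^2$ through the $1+k$ smooth blow-downs) is precisely the standard argument one expects that reference to carry out, with all the steps needed for correctness in place.
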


\begin{proposition} \label{degree-four-cases}
Let $S$ be a del Pezzo surface of degree four with canonical singularities
such that $-K_S \simeq 4l$ with $l \subset S$ a curve. Then there are three possibilities:
\begin{enumerate}
\item $\rho(S)=1$ and $S$ has a unique singular point, it is of type $D_5$.
\item $\rho(S)=1$ and $S$ has three singular points, two of type $A_1$ and one of type $A_3$.
\item $\rho(S)=2$ and $S$ has two singular points, one of type $A_1$ and one of type $A_3$.
\end{enumerate}
In the last two cases the curve $l$ passes through one $A_1$-singularity and the $A_3$-singularity, the differend of $l$ along the singularity has degree $\frac{1}{2}$ and $\frac{3}{4}$.
\end{proposition}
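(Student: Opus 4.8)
The plan is to pass to the minimal resolution and reduce the classification to an elementary computation with the different. Let $\holom{\pi}{\tilde S}{S}$ be the minimal resolution; since the singularities are canonical (du Val), $\pi$ is crepant, so $\tilde S$ is a weak del Pezzo surface with $K_{\tilde S}^2=K_S^2=4$ and $\rho(\tilde S)=10-K_{\tilde S}^2=6$, and the $\pi$-exceptional locus is a disjoint union of $(-2)$-curves $E_1,\dots,E_r$ forming one ADE-configuration for each singular point of $S$. The number $r$ of these curves equals $\rho(\tilde S)-\rho(S)=6-\rho(S)$, so in particular $r\le 5$. Let $\tilde l$ be the strict transform of $l$. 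As noted in the proof of Lemma~\ref{lemma-differend}, $l$ is smooth rational, and $-K_{\tilde S}\cdot\tilde l=-K_S\cdot l=1$; hence by adjunction $\tilde l^2=-1$, so $\tilde l$ is a $(-1)$-curve. I write $\pi^*l=\tilde l+\sum_i a_iE_i$ with $a_i\ge 0$.

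First I would exploit the relation $-K_{\tilde S}=\pi^*(-K_S)=4\,\pi^*l$. Setting $b_i:=\tilde l\cdot E_i\ge 0$ and using $\pi^*l\cdot E_j=0$, the coefficients satisfy $A\,a=b$, where $A$ is the (block-diagonal) Cartan matrix of the configuration; hence $a=A^{-1}b\ge 0$, and $4a_i\in\Z$ since $-K_{\tilde S}$ is integral. Restricting the identity $\pi^*(K_S+l)=K_{\tilde S}+\tilde l+\sum_i a_iE_i$ to $\tilde l$ and using that $\pi|_{\tilde l}$ is an isomorphism onto $l$, the adjunction formula $K_l+\Delta\sim_\Q(K_S+l)|_l$ gives $\deg\Delta=\sum_i a_ib_i=b^{T}A^{-1}b$. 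By Lemma~\ref{lemma-differend} this number equals $\tfrac54$. Because $A^{-1}$ is block-diagonal, $\tfrac54$ is the sum of the local contributions $b_c^{T}A_c^{-1}b_c$, one for each singular point $c$ of $S$ lying on $l$, and this local contribution is exactly the degree of the differend of $l$ at $c$.

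The heart of the argument is then a finite enumeration. I would first observe that if $\tilde l$ met one ADE-component in two distinct curves, or one curve with multiplicity $\ge 2$, the corresponding contribution $b_c^{T}A_c^{-1}b_c$ would already be $\ge 2>\tfrac54$; hence $\tilde l$ meets each component it touches transversally in a single curve $E_k$, so the local contribution is the diagonal entry $(A_c^{-1})_{kk}$. Running through all ADE types of rank $\le 5$, these diagonal entries are $\tfrac12$ $(A_1)$, $\tfrac23$ $(A_2)$, $\tfrac34,1$ $(A_3)$, $\tfrac45,\tfrac65$ $(A_4)$, $\tfrac56,\tfrac43,\tfrac32$ $(A_5)$, $1,2$ $(D_4)$ and $1,2,3,\tfrac54$ $(D_5)$, the value $\tfrac54=\tfrac n4$ being attained at a spinor node of $D_n$ with $n=5$. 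The only multisets summing to $\tfrac54$ are $\{\tfrac54\}$, a single $D_5$ met at a spinor node, and $\{\tfrac12,\tfrac34\}$, one $A_1$ together with one $A_3$ met at an end node. This already proves the last sentence: in the latter situation the differends are $\tfrac12$ along the $A_1$-point and $\tfrac34$ along the $A_3$-point.

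Finally I would combine this with $r=6-\rho(S)\le 5$. In the first case the $D_5$ already has rank $5$, so it is the whole configuration and $\rho(S)=1$, which is case (1). In the second case the components met by $l$ have rank $1+3=4$; the remaining components contribute $0$ to the differend and have total rank $\le 1$, so they are either empty — giving $A_1+A_3$ with $\rho(S)=2$, case (3) — or a single extra $A_1$ — giving $2A_1+A_3$ with $\rho(S)=1$, case (2). I expect the main obstacle to be the bookkeeping at the center of this scheme: establishing the identity $\deg\Delta=b^{T}A^{-1}b$ and its splitting into local terms, computing the relevant inverse-Cartan diagonals (especially the spinor value $\tfrac54$ for $D_5$), and ruling out non-transversal intersections; once these are in place the Picard-number and singularity bookkeeping is immediate.
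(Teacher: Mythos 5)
Your proof is correct, and it takes a genuinely different route from the paper's. The paper argues by cases on the Picard number: for $\rho(S)=1$ it quotes the Miyanishi--Zhang classification \cite[Table 1]{MZ88} of rank-one Gorenstein del Pezzo surfaces together with the requirement that the class group of $S$ modulo its Picard group contain an element of order four, while for $\rho(S)>1$ it contracts an extremal ray $\holom{\psi}{S}{S'}$, observes that $(-K_{S'})^2>4$ is divisible by four and at most $9$, so $S'$ is a quadric, and then invokes Lemma~\ref{lemma-add-Ak} to force an $A_3$-point on the contracted curve; the differend (via Lemma~\ref{lemma-differend} and the local values $\tfrac34$, $1$ for an $A_3$-point quoted from \cite[16.6.2,16.6.3]{Uta92}) enters only to exclude the residual case of a unique $A_3$-singularity. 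You instead make the differend identity the engine of the whole classification: from $\deg\Delta=b^{T}A^{-1}b=\tfrac54$ on the minimal resolution, its block decomposition into local contributions, and the list of diagonal entries of inverse Cartan matrices in rank at most five, you read off simultaneously the singularity types, the nodes where $\tilde l$ meets the exceptional configurations, and the local differend degrees, with the Picard numbers then following from $\rho(S)=6-r$. Your numerical data are correct: the diagonal entries you list are right (in particular $\tfrac54$ occurs exactly at a spinor node of $D_5$), the only multisets of such entries summing to $\tfrac54$ are $\{\tfrac54\}$ and $\{\tfrac12,\tfrac34\}$, and your claimed bound that a multiple or two-curve intersection with a single block contributes at least $2$ does hold (the minimum is exactly $2$, attained at pairs of end nodes of $A_n$). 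What your route buys is self-containedness: it re-derives the local differend values instead of citing \cite{Uta92}, avoids both the table lookup in \cite{MZ88} and the MMP step, and delivers the last sentence of the proposition as an immediate by-product rather than a separate verification. What the paper's route buys is brevity of computation, by delegating the rank-one case to known classification results. Both proofs share the starting identity $\rho(S)+r=6$ and the crucial input $\deg\Delta=\tfrac54$ from Lemma~\ref{lemma-differend}.
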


\begin{proof}
The minimal resolution is a weak del Pezzo surface of degree four, so it has Picard number six.
Thus $\rho(S)+k=6$ where $k$ is the number of exceptional divisors. 
With this information at hand we can settle the case $\rho(S)=1$ by looking at \cite[Table 1]{MZ88} and using that $\pic{S}/\mbox{Cl}(S)$ must contain an element of order four.

If $\rho(S)>1$, let $\holom{\psi}{S}{S'}$ be a birational Mori contraction. 
Then $-K_{S'} = 4 \psi_* l$ and $(-K_{S'})^2>(K_S)^2=4$. Since $(-K_{S'})^2 \leq 9$ is divisible by four, we obtain that $S'$ is a quadric surface, so it has at most an $A_1$-singularity. By Lemma \ref{lemma-add-Ak} we know that $S$ has a singularity of type $A_3$ along the exceptional divisor. Thus we are left to exclude the case where $S$ has a unique singularity of type $A_3$. Yet the degree of the differend of a smooth curve through an $A_3$-singularity is $\frac{3}{4}$ or $1$
\cite[16.6.2,16.6.3]{Uta92}, so we get a contradiction to Lemma \ref{lemma-differend}.
\end{proof}

\begin{proof}[Proof of Proposition \ref{prop-identify-F-prime}]
Let $\holom{\bar \psi}{\bar X}{\PP^1}$  be the relative anticanonical model of $X' \rightarrow \PP^1$. Since $X' \rightarrow \bar X$ only contracts $K$-trivial curves the rigidity lemma 
yields a small birational map $\bar \eta: \bar X \rightarrow X$.
Let $\bar H, \bar F, \bar B$ be the strict transforms of $H, F, B$,
then we have 
$$
-K_{\bar X} \simeq 4 \bar H  = 4 (3 \bar F + \bar B)
$$
Note that $-K_{\bar X} \simeq (\bar \eta)^* (-K_X)$ is Cartier, nef and big and $\bar \psi$-ample. Moreover $\bar B$ is $\Q$-Cartier since so are $\bar H$ and $\bar F$.

Since $X'$ only contains finitely many $K$-trivial curves we have $\bar F \simeq F'$, so it is enough to show the statement for $\bar F$. 

Let now $\bar B_1 \subset \bar B$ be the unique irreducible component that dominates $\PP^1$, i.e.
$\bar B_1 \cap \bar F=l$, and let $\holom{\nu}{\tilde B_1}{\bar B_1}$ be the normalisation.
Then $\nu^*(-K_{\bar X})$ is relatively ample and has degree one on the fibres, so
$\tilde B_1 \rightarrow \PP^1$ is isomorphic to a Hirzebruch surface $\mathbb F_d$. 
Set $e:=\nu^* (-K_{\bar X})^2$ and note that $1 \leq e \leq 4$ since
$(-K_{\bar X})^2 \cdot \bar B=4$ by Proposition \ref{proposition-classify-Xprime}.

For the first statement we will argue by contradiction and assume that we are in one of the two other cases of Proposition \ref{degree-four-cases}. 

{\em Step 1. The class $\bar B+\frac{3}{2} \bar F$ is pseudoeffective on its support and $\bar B+\frac{7}{4} \bar F$ is nef.}

It is enough to show that $\bar B+\frac{7}{4} \bar F$ is nef on its support. Since $\sO_{\bar X}(1)$ is nef, the statement is obvious for $F$ and the vertical components of $\bar B$, so we only
have to show that $\nu^*(\bar B+\frac{7}{4} \bar F)$ is nef. 
Analogously we see that is enough to show that $\nu^*(\bar B+\frac{3}{2} \bar F)$ is pseudoeffective.

Since $\bar X$ has canonical singularities it is $\Q$-factorial in codimension two \cite[Prop.9.1]{GKKP11}.
By Remark \ref{non-Q-factorial} the finite morphism $\nu$ induces a well-defined pull-back for $\Q$-Weil divisors classes and the subadjunction formula \eqref{eqn-subadjunction} holds. Thus we have
\begin{equation} \label{eqn4}
\nu^* (\bar B_1+\frac{3}{2}F)  \sim_\Q K_{\tilde B_1} + \Delta - \nu^* K_{\bar X} + \nu^* \frac{3}{2} F
\end{equation}
where $\Delta \geq 0$ is the differend.
We decompose
$\Delta=\Delta_h + \Delta_v$
into the horizontal (resp. vertical) part with respect to the ruling $\tilde B_1 \rightarrow \PP^1$. Denote by $C_0$ the negative section of $\tilde B_1 = \mathbb F_d$ and by $l$ a fibre of the ruling. Now observe the following
\begin{enumerate}
\item Since $\tilde B_1$ is a Hirzebruch surface, we have $\Delta_v \sim_\Q \alpha l$ with $\alpha \geq 0$ and $K_{\tilde B_1} \simeq -2C_0-(2+d)l$.
\item Since $-\nu^* K_{\bar X}$ is nef and has self-intersection $e$, we have
$-\nu^* K_{\bar X} \simeq C_0 + \frac{e+d}{2} l$ with $e \geq d$.
\item By Proposition \ref{degree-four-cases} we have $\Delta_h = \frac{1}{2} \Delta_{h,1}
+ \frac{3}{4} \Delta_{h,2}$ where the $\Delta_{h,i}$ are sections of the ruling. One of them
is not the negative section $C_0$ so it has class $C_0+\beta l$ with $\beta \geq d$.
Therefore we obtain $\Delta \sim_\Q \frac{5}{4} C_0 + \gamma l$ with $\gamma \geq \frac{d}{2}$.
\end{enumerate}
Combined with \eqref{eqn4} we obtain
\begin{equation} \label{differend-computation}
\nu^* (\bar B_1+\frac{3}{2}F) 
\sim_\Q \frac{1}{4} C_0 + (-\frac{1}{2}-\frac{d}{2}+\gamma+\alpha+\frac{e}{2}) l.
\end{equation}
Since $\alpha \geq 0$ and $\gamma \geq \frac{d}{2}$ 
the intersection number with $C_0$ is at least $\frac{e}{2}-\frac{d}{4}-\frac{1}{2}$.
Yet $e \geq d$, so this number is non-negative unless $d=e=1$. 
In the last case note that $\nu^* (\bar B_1+\frac{3}{2}F)  \sim_\Q \frac{1}{4} C_0 + \delta l$
with $\delta \geq 0$, so $\nu^* (\bar B_1+\frac{3}{2}F)$ is pseudoeffective.

Since $\nu^* (\bar B-\bar B_1)$ is vertical, hence a nef divisor, all the statements are still valid after adding $\bar B-\bar B_1$. The second statement follows by replacing $\frac{3}{2}$ by $\frac{7}{4}$ in the computation.

{\em Step 2. The contradiction.}
Using $\bar F^2=0$ we compute
\begin{multline*}
1 = \bar H^3 = 
[\frac{5}{4} \bar F + (\bar B + \frac{7}{4} \bar F)]
\cdot
[\frac{3}{2} \bar F + (\bar B+\frac{3}{2} \bar F)]^2
= \frac{17}{4} \bar F \cdot \bar B^2
+ (\bar B + \frac{7}{4} \bar F) \cdot  (\bar B+\frac{3}{2} \bar F)^2.
\end{multline*}
By Step 1 the class $\bar B + \frac{7}{4} \bar F$ is nef and $\bar B+\frac{3}{2} \bar F$
is nef in codimension one, so the last term is non-negative.
Yet $\bar F \cdot \bar B^2 = \frac{1}{4}$ by Proposition \ref{proposition-classify-Xprime}, so 
we get a contradiction.

{\em Step 3. Proof of the second statement.}
In Step 1 we used the assumption only in item c) to obtain the inequality $\gamma \geq \frac{d}{2}$. Since $\Delta_h$ is still a section of the ruling of $\tilde B_1$  we obtain that equality \eqref{differend-computation} holds, yet with the weaker property $\gamma \geq 0$.

Since $-K_X^2 \cdot B_2 = \frac{1}{2} (-K_X^2 \cdot F) = 2$
we have $-K_X^2 \cdot B_1=2$, so we have $e=2$.
Therefore 
\eqref{differend-computation} simplifies to
$$
\nu^* (B_1+\frac{3}{2}F) 
\sim_\Q \frac{1}{4} C_0 + (\frac{1}{2}-\frac{d}{2}+\gamma+\alpha) l.
$$
Since $2 B_2 \simeq F$ on $X$ we have $2 \bar B_2 \simeq F$
on $\bar X$ and therefore
$$
\nu^* (\bar B+\frac{3}{2}F)
=  \nu^* (\bar B_1+\bar B_2+\frac{3}{2}F)
\sim_\Q \frac{1}{4} C_0 + (1-\frac{d}{2}+\gamma+\alpha) l
$$
Since $d \leq e=2$ this divisor is pseudoeffective and
$\nu^* (\bar B+\frac{3}{2}F) \geq \frac{1}{4} C_0$.
If $\nu^* (\bar B+\frac{3}{2}F) \sim_\Q \frac{1}{4} C_0$
it is not always true that $\nu^* (\bar B+\frac{7}{4}F)$ is nef, but 
an elementary computation using that $C_0^2=-d \geq -e=-2$ shows that
$$
(\bar B + \frac{7}{4} \bar F) \cdot  (\bar B+\frac{3}{2} \bar F)^2 > \frac{-1}{16}.
$$
This is sufficient to get a contradiction as in Step 2.
\end{proof}

\subsection{An example} 

The Propositions \ref{proposition-classify-Xprime} and \ref{prop-identify-F-prime} give many restrictions on the Gorenstein Fano threefold $X$, we now show that such a variety actually exists.

\begin{example} \label{example-del-Pezzo-four}
Consider the vector bundle $V_d := \sO_{\PP^1}(2) \oplus \sO_{\PP^1}^{\oplus 2}$ over a curve $d \simeq \PP^1$. 
Set $\holom{\pi}{\PP(V_d)}{d}$, and let $\zeta_d$ be the tautological class.

Choose a splitting $\sO_{\PP^1}^{\oplus 2} \simeq L_1 \oplus L_2$ and set $L_0:= \sO_{\PP^1}(2)$. Let 
$$
s_0 \in H^0(\PP^1, L_0^{\otimes 2}), s_1 \in H^0(\PP^1, L_0 \otimes L_1),
s_2 \in H^0(\PP^1, L_0 \otimes L_2)
$$
be sections such that 
\begin{itemize}
\item $s_0 \neq 0$ has a simple root in $[1:0]$,
\item $s_2 \neq 0$ has a double root in $[1:0]$.
\item $s_1$ has at least\footnote{The section $s_1$ can be the zero section.} a double root in $[1:0]$
\end{itemize}
Set
$$
s_d = (s_0, s_1, 1, s_2, 0, 0) \in 
H^0(\PP^1, L_0^{\otimes 2} \oplus (L_0 \otimes L_1) \oplus L_1^{\otimes 2} \oplus (L_0 \otimes L_2)
\oplus (L_1 \otimes L_2) \oplus L_2^{\otimes 2}).
$$
Then $$s_d \in H^0(d, S^2 V_d) \simeq H^0(\PP(V_d), \sO_{\PP(V_d)}(2 \zeta_d))$$ defines a divisor $Y_d \subset \PP(V_d)$ such that
\begin{itemize}
\item the fibration $\holom{\varphi_d}{Y_d}{d}$ is a conic bundle having a double fibre over $[1:0]$ and all other fibres are smooth.
\item $Y_d$ is a normal projective surface with exactly one singular point;
\item the singular point is of type $D_4$;
\end{itemize}
Moreover $Y_d$ is a weak del Pezzo surface of degree four and Picard number two. The morphism
$$
\holom{\tau}{Y_d}{X_d}
$$
to the anticanonical model $X_d$ contracts a $\varphi_d$-section. The surface $X_d$ has a unique singular point, it is of type $D_5$.
\end{example}

\begin{proof}
Note first that since the section $1 \in H^0(\PP^1, L_1^{\otimes 2})$ does not vanish, the fibration $Y_d \rightarrow d$ is equidimensional. For every point
$p \in d$, the section $s_d$ determines the equation 
$$
s_0(p) X_0^2 + s_1(p) X_0 X_1 + X_1^2 + s_2(p) X_0 X_2 = 0
$$
of the fibre $\fibre{\varphi_d}{p} \subset \PP^2$. Since $L_0 \otimes L_2 \simeq \sO_{\PP^1}(2)$ and its $s_2$ has a double root in $[1:0]$, it does not vanish for $p \neq [1:0]$. An elementary computation now shows that the conic is smooth for $p \neq [1:0]$. For $p=[1:0]$
the equation simplifies to $X_1^2=0$, so the fibre is a double line. This shows the first item.

For the second item choose a local coordinate $t$ near $[1:0]$. Near this point the surface $Y_d$ is a hypersurface in $\Delta \times \PP^2$ given by the equation
$$
h:= s_0(t) X_0^2 + s_1(t) X_0 X_1 + X_1^2 + s_2(t) X_0 X_2 = 0.
$$ 
Since $s_1$ and $s_2$ have double roots at the origin and $s_0$ a simple root, a computation of the differential shows that 
the unique singular point of $Y_d$ is $(0, [0:0:1])$. 

The Picard number  $\rho(Y_d) = 2$, since the conic bundle $Y_d \rightarrow d$ has no reducible fibres. Since $K_{\PP(V_d)}=-3 \zeta_d$ and $Y_d \in |2 \zeta_d|$,
the anti-canonical bundle $-K_{Y_d} = \zeta_d$ is nef. Moreover $(-K_{Y_d})^2 =2 \zeta_d^3=4$, so $Y_d$ is weak del Pezzo of degree four. 
Since the minimal resolution has Picard number six, 
the singular point is of type $A_4$ or $D_4$.

Finally observe that $Y_d$ contains the section $X_0=X_1=0$, i.e. the section corresponding to the quotient $V \rightarrow V/(L_0 \oplus L_1) \simeq L_2$. Since $L_2$ is trivial, this
curve is contracted by the map to the anticanonical model $X_d$.
The surface $X_d$ satisfies the conditions of Proposition \ref{degree-four-cases} and has a unique singularity, so the singular point is of type $D_5$. Thus the singular point of $Y_d$ has type $D_4$.
\end{proof}

\begin{example} \label{the-example}
Let $T \simeq \mathbb F_4$ be the fourth Hirzebruch surface, denote by $C_0$ (resp. $d$) the negative section (resp. a fibre of the ruling). Consider the vector bundle
$$
V := \sO_T(12d+2C_0) \oplus \sO_T(6d) \oplus \sO_T.
$$
Set $\holom{\pi}{\PP(V)}{T}$, let $\zeta$ be the tautological class.

We fix $0 \neq s_{C_0} \in H^0(T, \sO_T(C_0))$. Since $\sO_T(12d+3C_0)$ is basepoint free
and $(12d+3C_0) \cdot C_0 = 0$ we can choose a section 
$$
\tilde s \in H^0(T, \sO_T(12d+3C_0))
$$
such that the corresponding divisor is a smooth curve that is disjoint from $C_0$. 
Choose any section $s_6 \in H^0(T, \sO_T(6d))$ and set
$$
s := (\tilde s \cdot s_{C_0},  s_6 \cdot s_{C_0}^2, 1, s_{C_0}^2, 0, 0) \in H^0(T, S^2 V \otimes  \sO_T(-12d)).
$$
Then $s$ defines a divisor $Y \subset \PP(V)$ such that
\begin{itemize}
\item the fibration $\holom{\varphi}{Y}{T}$ is a conic bundle having double fibres over $C_0$
and smooth fibres elsewhere;
\item $Y$ is a normal projective threefold with canonical singularities, the singular locus
is a section of $\fibre{\varphi}{C_0} \rightarrow C_0$
\item $Y$ has singularities of type $cD_4$ along this locus. 
\end{itemize}
Moreover $Y$ is a $\Q$-factorial weak Fano threefold with $(-K_Y)^3=64$ and Picard number three. The morphism
$$
\holom{\tau}{Y}{X}
$$
to the anticanonical model $X \subset \PP^{34}$ contracts exactly the $\varphi$-section $E_1$ corresponding to $V \rightarrow \sO_T$. We have
$$
- K_Y \simeq \varphi^* (12d+2C_0) + 2 E_1.
$$
and setting $B:=\frac{1}{2}\varphi^* C_0$, we have
$$
- K_{X} \simeq 4 (\tau_* (3 \varphi^* d+B)).
$$
The variety $X$ is a non-$\Q$-factorial Fano threefold with canonical singularities with Picard number one and degree 64. 
\end{example}

\begin{remark*}
Note that the divisor $B$ has integral coefficients, since the fibres over $C_0$ are double lines. Thus $-K_{X}$ is divisible by four as a Weil divisor.
\end{remark*}

\begin{proof}
Observe that for every fibre $d$ of the ruling, the restriction of $s$ to $d$
satisfies the conditions of Example \ref{example-del-Pezzo-four}:
the section $\tilde s \cdot s_{C_0}$ is not zero and vanishes with multiplicity
one in $d \cap C_0$, the section $s_6 \cdot s_{C_0}^2$ vanishes with multiplicity at least two in $d \cap C_0$,
and  $s_{C_0}^2$ with multiplicity exactly two in $d \cap C_0$. Thus the three first items follow from the corresponding items in Example \ref{example-del-Pezzo-four}. 

Since 
$$
K_{\PP(V)} \simeq \pi^* (K_T+\det V) - 3 \zeta
\simeq \pi^* 12d - 3 \zeta
$$
and $Y \in |2\zeta - \pi^* 12d|$ we see that $-K_Y \simeq \zeta|_Y$ is nef.
A standard computation shows that
$$
(-K_Y)^3 = \zeta^3 (2 \zeta - \pi^* 12d) = 64
$$
and
$$
h^0(Y, \sO_Y(-K_Y)) = h^0(T, V) = 35.
$$
Let us see that the anticanonical map contracts exactly the section $E_1$: since $-K_Y \simeq  \zeta|_Y$ it is equivalent to show that the $\zeta$-trivial curves $C \subset Y$
are contained in $E_1$. If $\varphi(C) \neq d$ this is clear, since then
the bundle
$$
\left( \sO_T(12d+2C_0) \oplus \sO_T(6l) \right) \otimes \sO_{\varphi(C)}
$$
is ample, so
there is only one $\zeta$-trivial curve in $\PP(V \otimes \sO_{\varphi(C)})$.
If $\varphi(C)=d$, the curve $C$ is contained in $Y_d \subset \PP(\sO(2) \oplus \sO^{\oplus 2})$.
Yet by Example \ref{example-del-Pezzo-four} this surface contains exactly
one $\zeta$-trivial curve, given by $E_1 \cap Y_d$.

Since $\tau$ is crepant and $Y$ has canonical singularities, so does $X$.
Since $\tau$ contracts exactly one prime divisor, the class group of $X$ has rank two.
The map 
$$
N_1(T) \simeq N_1(E_1) \rightarrow N_1(Y)
$$
is injective. Since $E_1$ is contracted by $\tau$ we obtain $\rho(Y/X)=\rk N_1(T)=2$. Thus
$X$ has Picard number one, and is not $\Q$-factorial.
\end{proof}

\subsection{The linear system $|2H|$} 

\begin{example} \label{example-Dfive}
Let $F$ be a del Pezzo surface of degree four with a $D_5$ singularity (the surface
no.24 in \cite[Table 3]{CP21}) which we consider in its anticanonical embedding
$F \subset \PP^4$. The surface $F$ contains a unique line $l$ and $\mbox{Cl}(X) \simeq \Z l$.
Since $(K_F)^2=4$ we see that 
$$
-K_F \simeq 4 l
$$
and $l^2=\frac{1}{4}$.

Moreover $F$ contains a unique pencil $|2l|$ of conics, the unique basepoint of this pencil is the singular point of $F$. Let $\holom{\nu}{\tilde F}{F}$ be the minimal resolution, and denote by $\tilde C \subset \tilde F$ the strict transform of a general conic $C \in |2l|$.
Note that $-K_{\tilde F} \cdot \tilde C=-K_F \cdot C=2$ implies that $C^2=0$, so the
linear system $|\tilde C|$ is a basepoint free pencil on $\tilde F$.
An elementary computation\footnote{It is not difficult to figure out the sequence of five blowups given by the MMP $\tilde F \rightarrow \PP^2$, this determines the configuration of the curves $E_1,\ldots,E_5, \tilde l, \tilde C$.} shows that
\begin{equation}
\label{pullbackconic}
\nu^* C = \tilde C + E_1+E_2+E_3 + \frac{1}{2} (E_4+E_5)
\end{equation}
where the $E_i$ are the exceptional curves and the curves $E_4$ and $E_5$ corresponding to the short edges of the $D_5$-graph. Moreover $\tilde C \cdot E_1=1$ and $\tilde C$ is disjoint from the other exceptional curves.

Let $\tilde l \subset \tilde F$ be the strict transform of the line $l$. Then, up to renumbering $E_4$ and $E_5$, we have $\tilde l \cdot E_5=1$
and $\tilde l$ is disjoint from the other exceptional curves.

Let $\holom{\mu_F}{F'}{F}$ be the extraction of the divisor $E_1$, i.e.
$F'$ is the surface obtained by contracting the curves $E_2+\ldots+E_5$. 
Denote by $C' \subset C$, then \eqref{pullbackconic} implies that
$$
\mu_F^* C = C' + E_1
$$
Since $\tilde C$ is disjoint from the exceptional locus $E_2+\ldots+E_5$ we have $(C')^2=(\tilde C)^2=0$, so the pencil $|C'|$ is basepoint free and defines a fibration
$$
\holom{\psi}{F'}{d \simeq \PP^1}.
$$
Looking at the finite map $\holom{\mu_F \times \psi}{F'}{F \times \PP^1}$
we see that $F'$ is the graph of the rational map $\varphi_{|2l|}=\varphi_{|C|}$.

Since $F$ contains a unique line, the conic bundle $\psi$ has a unique singular fibre
given by the strict transform of the double $2l$. By construction the surface $F'$ has a unique singular point, this point is of type $D_4$ and is the intersection of the double line and $E_1$.
\end{example}

We fix the setup for the rest of the section

\begin{setup} \label{setup-dim3}
{\rm Let $X$ be a canonical degeneration of $\PP^3$ such that $\dim Z=1$, by Proposition
\ref{proposition-classify-Xprime} we have a decomposition
$$
H \simeq 3 F + B
$$
into mobile and fixed part. Moreover we can find a $\Q$-factorialisation $\holom{\eta}{X'}{X}$ such that the strict transform $F'$ is a basepoint-free pencil defining a fibration
$$
\holom{\psi}{X'}{\PP^1}.
$$
By Proposition \ref{prop-identify-F-prime} the general fibre $F'$ 
is a del Pezzo surface of degree four with a $D_5$ singularity. 
We decompose
\begin{equation}
\label{decomposeH}
H' \simeq 3 F' + B_h + B_v
\end{equation}
where $B_h+B_v$ is the decomposition of the fixed part $B'$ into the horizontal and vertical part.

Note that from now 
$$
g: X \dashrightarrow T
$$ 
will denote the rational map $\varphi_{|2H|}$ defined by the linear system $|2H|$.

}
\end{setup}

\begin{proposition} \label{proposition-geometry-2D}
In the situation of the Setup \ref{setup-dim3}, the linear system $|2H|$ 
defines a rational map
$$
g: X \dashrightarrow T
$$  
onto a rational normal scroll $T \subset \PP^9$ of degree eight. The closure of a general fibre
identifies to a conic in the del Pezzo surface $F$ and the image of $F$ in $T$ is a line of the ruling.
\end{proposition}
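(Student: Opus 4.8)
The plan is to exploit the double fibration structure carried by $|2H|$: over the base $\PP^1 = Z$ of the pencil $|F|$ via $\holom{\psi}{X'}{\PP^1}$, and fibrewise through the conic pencils on the del Pezzo fibres. I work throughout on the $\Q$-factorialisation $X'$ of Setup \ref{setup-dim3}, using that $\eta$ is small so that $H^0(X', \sO_{X'}(2H')) \simeq H^0(X, \sO_X(2H))$. The first step is to compute $h^0(X, \sO_X(2H)) = 10$ by Riemann--Roch on a $\Q$-factorial Gorenstein terminalisation $\holom{\tau}{V}{X}$, exactly as in Proposition \ref{proposition-dim3-gorenstein}. Here $-K_V = 4\tau^* H$ is nef and big, so Kawamata--Viehweg vanishing reduces $h^0$ to $\chi$. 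Using the Fano relation $-K_V \cdot c_2(V) = 24\,\chi(\sO_V) = 24$ (equivalently reading off $\tau^* H \cdot c_2(V) = 6$ from the value $h^0(X, -K_X) = 35$ of Proposition \ref{proposition-dim3-gorenstein}), the Riemann--Roch expression for $2\tau^* H$ evaluates to $\tfrac{1}{12}(2h)(6h)(8h) + \tfrac{1}{12}(2h)\cdot c_2(V) + 1 = 8 + 1 + 1 = 10$, where $h = \tau^* H$ and $h^3 = 1$. Thus $g$ is a rational map to $\PP^9$, whose image $T$ is nondegenerate since $|2H|$ is complete.

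Next I would show that $\dim T = 2$, that the general fibre is a conic, and that $g(F)$ is a line. From the decomposition $H' \simeq 3F' + B_h + B_v$ one gets $H'|_{F'} = l$ and hence $2H'|_{F'} = 2l$; by Example \ref{example-Dfive} the system $|2l|$ is precisely the pencil of conics on $F'$, with its single base point at the $D_5$-point. Consequently the restriction map $H^0(X', \sO_{X'}(2H')) \to H^0(F', \sO_{F'}(2l)) \simeq \C^2$ has two-dimensional image: the sections of $|2H|$ cannot separate two points lying on a common conic, so every such conic lies in a fibre of $g$, forcing $\dim T \le 2$. Since the conics lie in the $\psi$-fibres, $\psi$ is constant on the fibres of $g$ and therefore factors through $g$; as $g(F')$ is the one-dimensional image of the conic pencil, $\dim T = 2$. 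Moreover $g|_{F'}$ spans only the line $\PP^1 \subset \PP^9$ cut out by $H^0(F', \sO_{F'}(2l))$, so $g(F')$ is a \emph{line}, while the closure of a general $g$-fibre is the conic $C \subset F$.

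Finally I would identify $T$ as a rational normal scroll of degree eight. The surface $T \subset \PP^9$ is nondegenerate and ruled by the lines $\{g(F_t')\}_{t \in \PP^1}$ over the base of $\psi$, hence it is a scroll. Realising $g$ relatively through $E := \psi_* \sO_{X'}(2H')$, a rank-two locally free sheaf on $\PP^1$ with $H^0(\PP^1, E) \simeq \C^{10}$, one reads off $\deg E = 8$, since the only splitting types of $E$ compatible with $h^0 = 10$ have degree $8$; the map $g$ then factors as $X' \dashrightarrow \PP(E) \to T$, where the tautological system realises $\PP(E)$ as the scroll. Equivalently $\deg T \le \sO_{\PP(E)}(1)^2 = \deg E = 8$, while the minimal-degree bound for a nondegenerate surface in $\PP^9$ gives $\deg T \ge 8$; hence $\deg T = 8$ and $T$ is a rational normal scroll of degree eight, with the general $g$-fibre the conic $C \subset F$ and $g(F)$ a ruling line, as claimed.

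The main obstacle I expect is the second step together with the degree bookkeeping: one must ensure that $|2H|$, despite its fixed part $B$ and base locus along the section of $D_5$-points, restricts with full rank two to $H^0(F', \sO_{F'}(2l))$ (so that $g|_{F'}$ is the entire conic pencil and not something degenerate), and that the ruling by lines over $\PP^1$ produces an honest rational normal scroll rather than a multiply-covered image. Carrying everything out on $X'$ with the basepoint-free pencil $|F'|$, and passing to the relative model $\PP(E)$, is what keeps these issues under control; the Riemann--Roch step also requires care with the Weil index of $H$, handled by working on the Gorenstein terminalisation $V$.
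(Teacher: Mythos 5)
Your outline reproduces the architecture of the statement but leaves unproven exactly the point on which the whole proof turns: that the restriction map $H^0(X', \sO_{X'}(2H')) \to H^0(F, \sO_F(2B_h)) \simeq H^0(F,\sO_F(2l))$ has rank two. You introduce this with ``Consequently'', but nothing before it implies it: a priori the horizontal divisor $2B_h$ could lie in the fixed part of $|2H'|$, in which case $g$ is constant on $F$, and your claims that $g(F')$ is the one-dimensional image of the conic pencil, that $\dim T = 2$, and that $E := \psi_*\sO_{X'}(2H')$ is generically globally generated (which is what you need to rule out splittings such as $\sO_{\PP^1}(9)\oplus\sO_{\PP^1}(-2)$, which also has $h^0=10$ but degree $7$) all collapse. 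You yourself flag this as ``the main obstacle I expect'', i.e.\ you do not resolve it. The paper's Step 1 is devoted to precisely this point: if the rank were $\leq 1$, then $H^0(X',\sO_{X'}(2H')) \simeq H^0(X', \sO_{X'}(6F' + 2B_v))$, and a computation of $\psi_*\sO_{X'}(2B_v)$ --- using $(-K_{X'})^2\cdot B_v < 4$ from Proposition \ref{proposition-classify-Xprime} --- bounds the latter by $8$, contradicting $h^0(X,\sO_X(2H)) \geq 10$. Without an argument of this kind the proof does not get off the ground.

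There are two further gaps. First, your Riemann--Roch computation of $h^0(X, \sO_X(2H)) = 10$ on a terminalisation $\holom{\tau}{V}{X}$ is not justified: $V$ is factorial, so integral Weil divisors are Cartier, but $2\tau^* H = -\tfrac12 K_V$ need not be an integral class --- writing $\tau^* H = H_V + \sum e_i E_i$ one only gets $e_i \in \tfrac14\Z$, so $\lfloor 2\tau^* H\rfloor$ may differ from $2\tau^* H$ by exceptional divisors with coefficient $\tfrac12$. Since $h^0(X, \sO_X(2H)) = h^0(V, \sO_V(\lfloor 2\tau^* H\rfloor))$, both the vanishing statement and the intersection numbers $(2h)(6h)(8h)$ and $(2h)\cdot c_2(V)$ that you feed into Riemann--Roch concern the wrong divisor. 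The paper avoids this entirely: it takes only the inequality $h^0 \geq 10$ from semicontinuity (Lemma \ref{lemma-semicontinuity}) and obtains $h^0 = 10$ and $\deg T = 8$ simultaneously by a squeeze, namely the minimal-degree bound $\deg T \geq \codim T + 1 \geq 8$ of Del Pezzo against the upper bound $\deg T \leq 8$ from Lemma \ref{lemma-variant}, the latter applicable because the general $g$-fibre contains a conic $C$ with $H\cdot C = \tfrac12$ (which again requires the rank-two statement above). Second, even granting everything else, your final step only shows that $T$ is a surface of minimal degree in $\PP^9$; to conclude that it is a rational normal \emph{scroll}, and not the cone over the rational normal curve of degree eight (equivalently, to exclude $E \simeq \sO_{\PP^1}(8)\oplus\sO_{\PP^1}$), one needs an extra argument --- the paper's is that in the cone case $c_1(\sO_T(1)) = 8l$ would force $H \simeq \mu_*\varphi^* 4l$ to have no fixed component, contradicting $B \neq 0$.
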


\begin{proof}
Note first that $(2H)^3=8$ and by Lemma \ref{lemma-semicontinuity}
$$
h^0(X, \sO_X(2H)) \geq h^0(\PP^3, \sO_{\PP^3}(2))=10.
$$
It is equivalent to show the statement for the fibration 
$\merom{g}{X'}{T}$ defined by $|2H'|$ on the $\Q$-factorialisation $X'$. 

{\em Step 1. We show that a connected component of a general $g$-fibre identifies to a conic in the del Pezzo surface.}
By \eqref{decomposeH} we have $2H' = 6F' + 2B_v + 2 B_h$
and we claim that that the restriction morphism
\begin{equation}
\label{restriction}
H^0(X', \sO_{X'}(2H')) \rightarrow H^0(F, \sO_F(2 B_h))
\end{equation}
has rank at least two: otherwise $2B_h$ is in the fixed part of $|\sO_{X'}(2H')|$ 
and therefore
$$
H^0(X', \sO_{X'}(2H'))  \simeq H^0(X', \sO_{X'}(6F'+2B_v)).
$$
We claim that $\psi_* \sO_{X'}(2B_v) \simeq \sO_{\PP^1}(a)$ with $a \leq 1$. Note that this implies that
$$
H^0(X', \sO_{X'}(2H')) = H^0(\PP^1, \psi_* \sO_{X'}(6F'+2B_v)) \simeq \sO_{\PP^1}(a+6)) 
$$
has dimension at most eight, a contradiction.

{\em Proof of the claim.} Since $\sO_F(B_v) \simeq \sO_F$ the direct image has rank one, so the statement is equivalent to $h^0(X', \sO_{X'}(2B_v)) \leq 2$. We decompose 
$$
2 B_v \simeq Mob + Fix.
$$
into its mobile and fixed part. The mobile divisor
$Mob$ is disjoint from the general fibre $F$, so $Mob \simeq \lambda F'$ for some 
$\lambda \in \N$ and the claim is equivalent to showing $\lambda \leq 1$.
Since $B_h \neq$ we have $(-K_{X'})^2 \cdot B_v < 4$ by Proposition \ref{proposition-classify-Xprime}.
Therefore  we have
$$
(-K_{X'})^2 \cdot Mob \leq (-K_X)^2 \cdot 2B_v \leq 6.
$$
Since $(-K_X)^2 \cdot F'=4$, this ends the proof of the claim.

By Example \ref{example-Dfive} we have $h^0(F, \sO_F(2 B_h))=h^0(F, \sO_F(2 l))=2$. We have shown
that the restriction map
$$
H^0(X', \sO_{X'}(2)) \rightarrow H^0(F, \sO_F(2 B_h)) \simeq H^0(F, \sO_F(2 l)) 
$$
has rank two, so it is surjective. Thus
the restriction of
$\varphi_{|2H'|}$ to $F$ coincides with the rational map 
$$
\varphi_{|2 l|} : F \dashrightarrow \PP^1
$$
The general fibre of $\varphi_{|2 l|}$ is a conic $C$ in the del Pezzo surface, so this finishes the proof of Step 1. Note that by Example \ref{example-Dfive} 
the graph of $\varphi_{|2 l|}$ is a conic bundle $F' \rightarrow \PP^1$ having exactly one singular fibre, a double line. Along this double line we have a unique singular point,  it is of type $D_4$.

{\em Step 2. We have $h^0(X, \sO_X(2H))=10$ and $\deg T=8$.}

Note first that 
the surface $T \subset \PP^N$ is linearly non-degenerate,
since $g^* |\sO_{\PP^N}(1)| = |\sO_X(2)|$.
Thus we have by a theorem of Del Pezzo \cite{EH87} 
$$
8 \geq \deg T \geq \codim_{\PP^N} T + 1 \geq 7+1
$$
where in the last step we used $N=h^0(X, \sO_X(2H))-1 \geq 9$. 
By Step 1 the rational map contracts curves $C$ such that
$H \cdot C=\frac{1}{2}$, so Lemma \ref{lemma-variant} yields
$\deg T \leq 8$. Thus we have equality.

Note that by the case of equality in Lemma \ref{lemma-variant} 
we now know that $|2H|$ has no fixed component, so we can apply 
Lemma \ref{lemma-indeterminacies} to $2H$ to get a birational 
map $\holom{\mu}{Y}{X}$ and a fibration $\holom{\varphi}{Y}{T}$
with the properties given by the lemma. 

{\em Step 3. $T$ is a rational normal scroll.}
By Step 2 the surface $T \subset \PP^9$ has minimal degree, so by \cite{EH87} it is a rational normal scroll or a cone over the rational normal curve of degree eight in $\PP^8$. 

Arguing by contradiction assume that $T$ is a cone. Then 
$c_1(\sO_T(1))=8 l$ with $l$ a line of the ruling. Since 
$\mu^* 2H \simeq \varphi^* c_1(\sO_T(1)) + E$ with $E$ a $\mu$-exceptional divisor. We obtain that $H \simeq \mu_* \varphi^* 4 l$.
Yet $\mu_* \varphi^* 4 l$ has no fixed component, a contradiction to 
$B \neq 0$.

{\em Step 4. A general surface $F$ is mapped by $g$ onto a line of the ruling in $T$.}

We know that $F$ is contracted onto a curve $G \subset T$ since $F$ is covered by conics that are contracted by $g$. Thus we have 
$F \simeq \mu_* \varphi^* G$ and $h^0(X, \sO_X(F))=2$ implies that
$h^0(T, \sO_T(G)) \leq 2$. Yet a general curve $G \subset T$ is 
a nef divisor and an easy computation on Hirzebruch surfaces shows that
$h^0(T, \sO_T(G)) \leq 2$ implies that $G$ is a line of the ruling.
\end{proof}

\begin{lemma} \label{lemma-description-Y}
In the situation of Proposition \ref{proposition-geometry-2D},
let $\mu: Y \rightarrow X$ and $\varphi: Y \rightarrow T$
be the maps given by the case of equality in Lemma \ref{lemma-variant}.
Then the $\mu$-exceptional locus is a unique prime divisor $E_1$ and the fibration $\holom{\varphi}{Y}{T}$ is a conic bundle.

Denote by $F_Y \subset Y$ (resp. $B_Y \subset Y$) the strict transform of $F \subset X$ (resp. $B \subset X$). Then we have
$$
T \simeq \mathbb F_4,
$$
the discriminant divisor of $\varphi$ is the negative section $C_0 \subset \mathbb F_4$ and $2 B_Y \simeq \varphi^* C_0$.
\end{lemma}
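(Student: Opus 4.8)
The plan is to read off the entire structure of $\varphi\colon Y \to T$ by restricting to a general del Pezzo fibre, and then to fix the one remaining discrete parameter by intersection theory together with the second, technical assertion of Proposition \ref{prop-identify-F-prime}.

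First I would exploit the case of equality in Lemma \ref{lemma-variant}, which transports the conclusions of Corollary \ref{corollary-nminusone} to the present situation: the morphism $\mu$ is crepant, $\varphi$ is equidimensional with connected fibres, $-K_Y$ is $\varphi$-ample, and every $\mu$-exceptional divisor is contracted to a point and maps finitely, hence surjectively, onto $T$. Since a general fibre of $\varphi$ is one of the conics $C \subset F$ of Proposition \ref{proposition-geometry-2D} and $\mu$ is crepant, $-K_Y \cdot (\text{general fibre}) = -K_X \cdot C = 4 H \cdot C = 2$; as $Y$ is Gorenstein (Proposition \ref{proposition-dim3-gorenstein}) and $\varphi$ is flat with one-dimensional fibres of arithmetic genus zero, $\varphi$ is a conic bundle. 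To see that the exceptional locus is a single prime divisor I restrict to the general fibre surface $F_Y = \varphi^* G$ over a ruling line $G \subset T$: by Proposition \ref{proposition-geometry-2D} and Example \ref{example-Dfive} the induced map $F_Y \to G$ is the graph of $\varphi_{|2l|}$, which has a unique $\mu$-exceptional curve, namely a section of $F_Y \to G$ meeting the general conic once, and a unique singular fibre, the double line $2\tilde l$. Hence $\mathrm{Exc}(\mu)$ meets $F_Y$ in one section, so $\mathrm{Exc}(\mu) \to T$ has degree one; being finite onto the smooth surface $T$ it is an isomorphism, giving a unique prime divisor $E_1$ that is a $\varphi$-section.

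Next I record that $T$, being a smooth surface of minimal degree $8$ in $\PP^9$ which is not a cone (Proposition \ref{proposition-geometry-2D}), is a Hirzebruch surface $\mathbb F_e$ with $e \in \{0,2,4,6\}$ and $\sO_T(1) \simeq C_0 + (4 + \tfrac{e}{2}) G$, where $C_0$ is the minimal section and $G$ a ruling line; since over each $G$ there is exactly one double fibre, the discriminant $\sigma$ is a section $C_0 + kG$. Now I would run the intersection computation on $Y$. From Lemma \ref{lemma-indeterminacies} we have $\mu^*(2H) \sim_\Q \varphi^*\sO_T(1) + a_1 E_1$, and crepancy gives $-K_Y \sim_\Q 2\varphi^*\sO_T(1) + 2 a_1 E_1$. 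Because $\mu(E_1)$ is a point, $-K_Y|_{E_1} \sim_\Q 0$, so adjunction yields $E_1|_{E_1} \sim_\Q K_{E_1} + \mathrm{Diff}$ with $\mathrm{Diff}$ an effective divisor supported on the singular locus $\sigma \subset E_1$ of $Y$. Intersecting the relation $-K_Y|_{E_1} \sim_\Q 0$ with $G$ and using $(-K_Y)^2 \cdot F_Y = 4$ (Proposition \ref{proposition-classify-Xprime}) forces $a_1 = 1$ and pins the coefficient of $\mathrm{Diff}$ along $\sigma$; comparing the $G$-coefficients and using $\sO_T(1)^2 = 8$ then gives $k = \tfrac{e}{2} - 2$. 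Restricting to $F_Y$ identifies the double fibre $2\tilde l = 2(B_Y \cap F_Y)$ with $\varphi^*\sigma|_{F_Y}$, so $2B_Y - \varphi^*\sigma$ is vertical; testing against $(-K_Y)^2 \cdot B_Y = 4$ shows this vertical divisor vanishes, i.e. $2B_Y \simeq \varphi^*\sigma$ and $B_Y = \tfrac12 \varphi^* C_0 + \tfrac{e-4}{4} F_Y$.

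Finally I pin down $e$. Effectivity of the section $\sigma = C_0 + (\tfrac{e}{2}-2) G$ already forces $e \ge 4$, leaving $e \in \{4,6\}$. In the value $e = 6$ the divisor $B_Y$ acquires the vertical part $\tfrac12 F_Y$, so the fixed part decomposes as $B = B_1 + B_2$ with $2 B_2 \simeq F$, exactly the configuration ruled out by the second statement of Proposition \ref{prop-identify-F-prime}. Therefore $e = 4$, whence $T \simeq \mathbb F_4$, the discriminant $\sigma$ is the negative section $C_0$, and $2B_Y \simeq \varphi^* C_0$. The main obstacle is that none of the readily available numerical invariants ($\deg T = 8$, $h^0 = 10$, $(-K_Y)^3 = 64$, and $(-K_Y)^2 \cdot F_Y = (-K_Y)^2 \cdot B_Y = 4$) separates $\mathbb F_4$ from $\mathbb F_6$; the delicate point is to carry the adjunction computation through the different along the $cD_4$-locus $\sigma \subset E_1$ accurately enough to express both the discriminant and $2B_Y$ as the single pullback class $C_0 + (\tfrac{e}{2}-2)G$, and only then to eliminate $e = 6$ by the otherwise mysterious second half of Proposition \ref{prop-identify-F-prime}.
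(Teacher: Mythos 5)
Your proposal follows the same architecture as the paper's proof: conic-bundle structure and uniqueness of the section $E_1$ via Example \ref{example-Dfive}, the identity $\mu^*(2H) \sim_\Q \varphi^*\sO_T(1) + E_1$, reduction to $T \simeq \mathbb F_e$ with $e$ even, and elimination of the last bad case by the second statement of Proposition \ref{prop-identify-F-prime}. Your one genuinely different ingredient is sound and attractive: computing the different of the pair $(Y,E_1)$ on the section $E_1 \simeq T$ gives $\mathrm{Diff} \equiv -\sO_T(1) - K_T \equiv C_0 + (\tfrac{e}{2}-2)d$ (with $d$ a ruling line), and effectivity of the different forces $e \geq 4$; the paper instead rules out $e \leq 2$ by bounding $\sO_T(1)\cdot C_h \leq 2$, where $C_h$ is the curve over which the fibres are double lines.

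The middle of your argument, however, has two genuine gaps. First, the different is supported on $\mathrm{Sing}(Y)\cap E_1$ with coefficients dictated by the transversal singularity types; it is \emph{not} the discriminant divisor. What your computation actually yields is that the unique horizontal curve of $\mathrm{Sing}(Y)\cap E_1$ (the one mapping onto $C_h$) occurs in $\mathrm{Diff}$ with coefficient exactly $1$, and that $C_h \equiv C_0 + kd$ with $k \leq \tfrac{e}{2}-2$; your asserted equality $k=\tfrac{e}{2}-2$ presupposes that $\mathrm{Diff}$ has no vertical components, which is unjustified (and false in the hypothetical $e=6$ configuration). To conclude $C_h = C_0$ one still needs the observation, present in the paper but absent from your proposal, that an irreducible section of $\mathbb F_e$ has class $C_0$ or $C_0+kd$ with $k\geq e$. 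Second, and more seriously, the step ``$2B_Y - \varphi^*\sigma$ is vertical; testing against $(-K_Y)^2\cdot B_Y = 4$ shows this vertical divisor vanishes'' is not a valid argument: an intersection number can force an \emph{effective} class to vanish (via $(-K_Y)^2\cdot D = (-K_X)^2\cdot \mu_* D$ and ampleness of $-K_X$), but $2B_Y - \varphi^*\sigma$ is a difference of effective divisors, and the test is passed with equality for both $e=4$ and $e=6$ (where by your own conclusion $B_{v,Y}\neq 0$), so it distinguishes nothing and cannot rule out $2B_{v,Y}$ and the vertical part of $\varphi^*\sigma$ lying over different ruling lines. The identity you need --- which the paper obtains by equating the decomposition $\mu^*2H \sim_\Q \varphi^*\sO_T(1)+E_1$ of Lemma \ref{lemma-indeterminacies} with the strict-transform decomposition $\mu^*2H \sim_\Q 6F_Y + 2B_{h,Y}+2B_{v,Y}+E_1$, giving $2B_{v,Y} \sim_\Q \varphi^*\bigl(\sO_T(1)-6d-C_h\bigr)$ --- never appears in your proposal, yet it is exactly what yields $B_{v,Y}=0$ when $e=4$ and the relation $2B_v \simeq F$ when $e=6$, the hypothesis required to invoke Proposition \ref{prop-identify-F-prime} and kill that case. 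Without it, neither $2B_Y \simeq \varphi^* C_0$ nor the exclusion of $e=6$ goes through.
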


\begin{proof}
By Lemma \ref{lemma-variant} the fibration $\varphi$ is an (abstract) conic bundle with general fibre $C$.
Since 
$$
-K_X \cdot \mu(C) = -K_Y \cdot C = 2
$$
and $-K_X$ is very ample (cf. Subsection \ref{subsection-observations}), we can even identify
the $\varphi$-fibres to conics in the anticanonical embedding $X \hookrightarrow \PP^{34}$.
 
This implies that there is just one $\mu$-exceptional divisor:
otherwise the family of conics $\mu(C)$ would have at least two fixed points, but we know 
by Example \ref{example-Dfive} that
the family of conics on the del Pezzo surface $F \subset X$ has only one fixed point.
Note also that $E_1$ is a section since otherwise the irreducible conics $\mu(C)$ would be singular in the point $\mu(E_1)$.

Since $2H \cdot C=1$, we obtain
\begin{equation}
\label{eqn5}
\mu^* 2H \sim_\Q \varphi^* \sO_T(1) + E_1
\end{equation}
Let now $B' \simeq B_h+B_v$ be the decomposition of the fixed part of $|H'|$ from \eqref{decomposeH},
and denote by $B_Y \simeq B_{h,Y} + B_{v,Y}$ the induced decomposition of the strict transform.

We already know by Proposition \ref{proposition-geometry-2D} that
$F_Y \simeq \varphi^* d$ with $d$ a line of the ruling.
The divisor $B_{h,Y}$ is also vertical with respect to $\varphi$:
for $F$ a general $\psi$-fibre
the conic bundle contracts the line $B_h \cap F$ since they appear as double lines in the family of conics. Thus $B_{h,Y}$ is contracted onto a curve $C_h \subset T$
and $\varphi^* C_h = 2 B_{h,Y}$ since the fibres over $C_h$ are double lines.
In summary
$$
\mu^* 2H \sim_\Q 6 F_Y + 2 B_{h,Y} + 2 B_{v,Y} + c_1 E_1 \simeq \varphi^* (6d+C_h) + 2 B_{v,Y} + c_1 E_1.
$$
Since $B_{v,Y}$ is disjoint from a general conic, we have $B_{v,Y} \cdot l=0$. Thus 
$E_1 \cdot l=1$ and $2H \cdot \mu(C)=1$ yields $c_1=1$.
Combined with \eqref{eqn5} we obtain
$$
\varphi^* \sO_T(1) = \varphi^* (6d+C_h) + 2 B_{v,Y},
$$
in particular $2 B_{v,Y} \sim_\Q \varphi^* \Delta$ for some effective $\Q$-divisor $\Delta$ on $T$. Since $\varphi$ is a conic bundle, a pull-back of an irreducible curve is 
either reduced or has multiplicity two. Thus $\Delta$ has integral coefficients. Observe that
$$
8 = c_1(\sO_T(1))^2 = c_1(\sO_T(1)) \cdot (6d+C_h+\Delta) \geq 6 + c_1(\sO_T(1)) \cdot C_h
$$
and the inequality is strict if and only if $\Delta \neq 0$. In particular $c_1(\sO_T(1)) \cdot C_h \leq 2$.

Now recall that the rational normal scrolls of degree eight in $\PP^9$ are isomorphic to Hirzebruch surfaces
$\mathbb F_e$ embedded by the linear system $|C_0+(4+\frac{e}{2})d|$ with $C_0$ the negative section. Such an embedding exists if and only if $e$ is even and $e \leq 6$. Observe that for $e \leq 2$ we have
$$
c_1(\sO_T(1)) \cdot C = (C_0+(4+\frac{e}{2})d) \cdot C_h \geq 3
$$ 
unless $C_h$ is a line. But this is impossible since $C_h$ is a section of the ruling. 
Since $c_1(\sO_T(1)) \cdot C_h \leq 2$ we obtain $e=4$ or $e=6$.

{\em 1st case. We have $e=4$.}
Then 
$$
2 \geq c_1(\sO_T(1)) \cdot C_h = (C_0+6d) \cdot C_h \geq 2
$$
implies that $C_h=C_0$ and $\Delta=0$.

{\em 2nd case. We have $e=6$.}
Then
$$
2 \geq c_1(\sO_T(1)) \cdot C_h = (C_0+7d) \cdot C \geq 1
$$
implies that $C_h=C_0$ and $c_1(\sO_T(1)) \cdot \Delta=1$. Since $\Delta$ is distinct from $C_h$,
it is a line $d_0$ of the ruling. Since $2 B_{v,Y} \simeq \varphi^* d_0$, the $\varphi$-fibres over $d_0$ are non-reduced.
In particular we have $B=B_h + B_v$
and $2 B_v = \mu_* \varphi^* d_0 \simeq \mu_* \varphi^* d \simeq F$.
Yet this is excluded by the second statement of Proposition \ref{prop-identify-F-prime}, so the case $e=6$ does not exist.
\end{proof}

The next step is to determine the embedding 
$$
i: Y \hookrightarrow \PP(\varphi_* \sO_Y(-K_Y))
$$
that gives the conic bundle structure. The direct image $V:=\varphi_* \sO_Y(-K_Y)$ is a rank three vector bundle, and we have an extension
\begin{equation}
\label{extensionV}
0 \rightarrow W \rightarrow V \rightarrow \sO_T \rightarrow 0
\end{equation}
given by the section $E_1 \subset Y$. Thus we are left to determine the rank 2 vector bundle $W$.

\begin{proposition} \label{proposition-extension-W}
In the situation of Proposition \ref{proposition-geometry-2D} we
have an extension
\begin{equation}
\label{extensionW}
0 \rightarrow \sO_T(ad+2C_0) \rightarrow W \rightarrow \sO_T(bd) \rightarrow 0
\end{equation}
with $a,b \in \Z$.
\end{proposition}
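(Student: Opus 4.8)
The plan is to obtain the sub-line bundle of the asserted sequence as the relative maximal destabilising sub-line bundle of $W$ for the ruling $\rho \colon T = \mathbb{F}_4 \to \PP^1$, and then to identify both line bundles from their restriction to a ruling fibre $d$.

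First I would compute $W|_d$ for a general fibre $d$. Since $F_Y = \varphi^* d$ and $d^2 = 0$, adjunction gives $-K_Y|_{F_Y} = -K_{F_Y}$, so cohomology and base change along $d \hookrightarrow T$ yield $V|_d \cong (\varphi|_{F_Y})_* \sO_{F_Y}(-K_{F_Y})$. By Proposition \ref{proposition-geometry-2D} the map $\varphi|_{F_Y}\colon F_Y \to d$ is the conic bundle on the degree-four del Pezzo surface $F$ analysed in Examples \ref{example-del-Pezzo-four} and \ref{example-Dfive}, whose relative anticanonical pushforward is $\sO_{\PP^1}(2)\oplus\sO_{\PP^1}^{\oplus 2}$; together with \eqref{extensionV} this gives $W|_d \cong \sO_{\PP^1}(2)\oplus\sO_{\PP^1}$. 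In particular $W|_d$ has a unique sub-line bundle of degree two, so these glue to a saturated sub-line bundle $\mathcal N \subset W$. As $\mathcal N$ is a line bundle on $\mathbb{F}_4$ it equals $\sO_T(\alpha C_0 + a d)$, and $\mathcal N|_d \cong \sO_{\PP^1}(\alpha) = \sO_{\PP^1}(2)$ forces $\alpha = 2$, i.e.\ $\mathcal N = \sO_T(a d + 2 C_0)$. The quotient $W/\mathcal N$ is torsion-free of rank one whose reflexive hull restricts to $\sO_{\PP^1}$ on each fibre, hence is $\sO_T(bd)$ with no $C_0$-term; thus $W/\mathcal N \cong \mathcal I_Z \otimes \sO_T(bd)$ for some zero-dimensional $Z$, and $\det V = \det W = \sO_T((a+b)d + 2C_0)$ pins down $a+b$.

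The crucial and hardest point is that $Z = \emptyset$, so that the quotient is genuinely the line bundle $\sO_T(bd)$; a priori $W$ could be a Serre-type extension with $Z \ne \emptyset$. I would deduce this from the stronger claim that $W|_d \cong \sO_{\PP^1}(2)\oplus\sO_{\PP^1}$ for \emph{every} ruling fibre $d$. Indeed, if some $p \in Z$ lay on the fibre $d_p$, then restricting $0 \to \mathcal N \to W \to \mathcal I_Z \otimes \sO_T(bd) \to 0$ to $d_p$ would (as $\mathrm{Tor}_1(\mathcal I_Z, \sO_{d_p}) = 0$) give an injection $\mathcal N|_{d_p} \cong \sO_{\PP^1}(2) \hookrightarrow W|_{d_p} \cong \sO_{\PP^1}(2)\oplus\sO_{\PP^1}$ whose cokernel $(\mathcal I_Z\otimes \sO_T(bd))|_{d_p}$ has torsion at $p$; but every nonzero homomorphism $\sO_{\PP^1}(2)\to\sO_{\PP^1}(2)\oplus\sO_{\PP^1}$ has saturated image and torsion-free cokernel, a contradiction.

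It therefore remains to rule out jumping of the splitting type on special fibres. Here I would invoke Lemma \ref{lemma-description-Y}: the discriminant of $\varphi$ is exactly the negative section $C_0$, which meets each ruling fibre $d$ in one point, so for \emph{every} $d$ the restriction $\varphi^{-1}(d) \to d$ is a conic bundle over $\PP^1$ with a single double-line fibre and with $-K$ nef and big of degree four. The computation of Example \ref{example-del-Pezzo-four} applies uniformly to such a conic bundle and forces $V|_d \cong \sO(2)\oplus\sO^{\oplus 2}$, hence $W|_d \cong \sO(2)\oplus\sO$ with no jump. Establishing this uniformity across all, including the most degenerate, fibres $F_Y$ of $\psi$ is the step I expect to require the most care, as it is exactly where the non-$\Q$-factoriality and the worst singularities of $X$ could interfere.
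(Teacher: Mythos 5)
Your reduction of the proposition to the claim that the splitting type does not jump --- i.e.\ that $W \otimes \sO_d \simeq \sO_{\PP^1}(2)\oplus\sO_{\PP^1}$ for \emph{every} ruling fibre $d$ --- is the same reduction the paper makes, and your way of exploiting the claim (saturate the fibrewise degree-two sub-line bundle to get $\sN \simeq \sO_T(ad+2C_0)$, write $W/\sN \simeq \sI_Z \otimes \sO_T(bd)$, and kill $Z$ by restricting to a fibre through a point of $Z$ and comparing torsion) is correct; the paper instead obtains the extension directly from the evaluation map $h^* h_*(W\otimes\sO_T(-2C_0)) \rightarrow W\otimes\sO_T(-2C_0)$ for the ruling $h$, which is shorter but equivalent in substance.

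The genuine gap is in your proof of the claim itself, which is the heart of the proposition. You argue that by Lemma \ref{lemma-description-Y} every $\fibre{\varphi}{d} \rightarrow d$ is a conic bundle with a single double-line fibre and smooth fibres elsewhere, and that ``the computation of Example \ref{example-del-Pezzo-four} applies uniformly'' to force $V \otimes \sO_d \simeq \sO_{\PP^1}(2)\oplus\sO_{\PP^1}^{\oplus 2}$. This is backwards: Example \ref{example-del-Pezzo-four} \emph{starts} from the bundle $\sO_{\PP^1}(2)\oplus\sO_{\PP^1}^{\oplus 2}$ and prescribed sections and constructs a surface; it is not a classification, and the discriminant data alone cannot force the splitting type. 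Indeed, on $E:=\sO_{\PP^1}(2)\oplus\sO_{\PP^1}(1)\oplus\sO_{\PP^1}(-1)$ the section $q = s^4x_1^2 + t^2x_2^2 + t x_1x_3 \in H^0(\PP^1,\Sym^2 E)$ defines a divisor $S \in |2\zeta|$ whose fibres are smooth conics for $t \neq 0$ and a double line over $t=0$, with $(\zeta|_S)^2=4$ and $\omega_S^{-1}\simeq \zeta|_S$ --- all of your hypotheses except nefness, which fails because $S$ contains the section $\{x_1=x_2=0\}$ on which $\zeta$ has degree $-1$. So the nefness of $-K_Y|_{Y_d}$ must enter in an essential way, and your cited ingredients give no mechanism for it to do so. This is exactly what the paper's proof supplies: since $c_1(V\otimes\sO_d)=2$ is independent of $d$, the surface $Y_d := \fibre{\varphi}{d}$ has class $2\zeta_d$ in $\PP(V\otimes\sO_d)$; a quotient $V\otimes\sO_d \rightarrow \sO_{\PP^1}(a_3)$ with $a_3<0$ would give a section $\sigma$ with $Y_d \cdot \sigma = 2a_3<0$, hence $\sigma \subset Y_d$, contradicting the nefness of $\zeta_d|_{Y_d} \simeq -K_Y|_{Y_d}$; nefness, $c_1=2$ and semicontinuity then force the splitting type $(2,0,0)$ on every fibre. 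Note that this argument needs no control of the geometry of the special surfaces $Y_d$ (normality, type of singularities), which your route would otherwise have to establish and which you correctly identify as the delicate point --- but do not actually prove.
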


\begin{proof}
We claim
$$
V \otimes \sO_d \simeq \sO_{\PP^1}(2) \oplus \sO_{\PP^1}^{\oplus 2}, \qquad W \otimes \sO_d \simeq \sO_{\PP^1}(2) \oplus \sO_{\PP^1}
$$
for {\em every} fibre $d$ of the ruling $h: \mathbb F_4 \rightarrow \PP^1$.
The claim immediately implies the statement:
 the direct image $h_* (W \otimes \sO(-2C_0))$ has rank one, and the natural morphism
$$
h^* h_* (W \otimes \sO(-2C_0)) \rightarrow W \otimes \sO(-2C_0)
$$
defines a rank one subbundle isomorphic to $\sO_T(ad)$ for some $a \in \Z$.

{\em Proof of the claim.}
We already know that the statement holds for the general fibre by Example \ref{example-Dfive}.
Thus we have $c_1(V \otimes \sO_d)=2$. Moreover since $\varphi$ is equidimensional, the surface
$$
Y_d:= \fibre{\varphi}{d} \subset \PP(V \otimes \sO_d)
$$
has class $2 \zeta$ and 
$$
-K_Y|_{Y_d} \simeq -K_{Y_d} \simeq \zeta_d.
$$
Since $-K_Y|_{Y_d}$ is nef we obtain that $\zeta_d$ and therefore $V \otimes \sO_d$ is nef.
Since the splitting type on the general fibre is $\sO_{\PP^1}(2) \oplus \sO_{\PP^1}^{\oplus 2}$,
semicontinuity implies that $V \otimes \sO_d$ has the same splitting type.
\end{proof}

\begin{theorem}
In the situation of Setup \ref{setup-dim3}, the variety $X$ is isomorphic to the anticanonical model from Example \ref{the-example}.
\end{theorem}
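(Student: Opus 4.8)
The plan is to make the conic bundle $\varphi\colon Y\to T\cong\mathbb F_4$ completely explicit and recognize it as the one in Example \ref{the-example}. Write $V:=\varphi_*\sO_Y(-K_Y)$, so that $Y\subset\mathbb P(V)$ is a divisor in $|2\zeta+\pi^*L|$ with $-K_Y=\zeta|_Y$; by adjunction $L=-K_T-\det V=(6-a-b)d$, where by Proposition \ref{proposition-extension-W} and Lemma \ref{lemma-description-Y} the bundle $V$ carries a filtration with graded pieces $\sO_T(ad+2C_0)$, $\sO_T(bd)$, $\sO_T$, and hence $\det V=(a+b)d+2C_0$.

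First I would pin down $a$ and $b$ numerically. The discriminant of the conic bundle is the vanishing of the determinant of the associated symmetric map $V^\vee\to V\otimes L$, so it has class
\[
2c_1(V)+3L=(18-a-b)\,d+4C_0 .
\]
By Lemma \ref{lemma-description-Y} the reduced discriminant is exactly $C_0$ and all degenerate fibres are double lines (equivalently $2B_Y\simeq\varphi^*C_0$), so there is no vertical component and $a+b=18$, whence $L=-12d$ as in the example. Computing $c_2(V)=2b$ from the filtration and expanding $(-K_Y)^3=\zeta^3(2\zeta+\pi^*L)=2\big(c_1(V)^2-c_2(V)\big)+c_1(V)\cdot L=88-4b$, the equality $(-K_Y)^3=64$ forces $b=6$ and $a=12$. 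Thus $V$ has graded pieces $\sO_T(12d+2C_0),\sO_T(6d),\sO_T$. To upgrade this to $V\cong\sO_T(12d+2C_0)\oplus\sO_T(6d)\oplus\sO_T$, a cohomology computation on $\mathbb F_4$ gives $H^1(T,W)=0$, so the extension $0\to W\to V\to\sO_T\to0$ splits; the remaining extension has $\Ext^1(\sO_T(6d),\sO_T(12d+2C_0))=H^1(\sO_T(6d+2C_0))\cong\mathbb C$, and I expect the non-split alternative to be excluded by the local analysis over $C_0$ together with normality of $Y$, since the extension class is supported precisely along $C_0$ where the $cD_4$-degeneration sits.

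The decisive step, and the main obstacle, is to normalize the quadratic form $q\in H^0(T,S^2V\otimes L)$ defining $Y$. With $V$ identified, the only nonzero components of $S^2V\otimes L$ are the coefficients of $X_0^2,X_0X_1,X_0X_2,X_1^2$, lying in $H^0(\sO_T(12d+4C_0))$, $H^0(\sO_T(6d+2C_0))$, $H^0(\sO_T(2C_0))=\langle s_{C_0}^2\rangle$ and $H^0(\sO_T)=\mathbb C$; in particular the $X_1X_2$ and $X_2^2$ terms vanish automatically, so $E_1=\{X_0=X_1=0\}\subset Y$. Irreducibility and normality of $Y$ then force the $X_1^2$ and $X_0X_2$ coefficients to be nonzero and the $X_0^2$ coefficient to vanish to order exactly one along $C_0$; I would verify the last point by the explicit local computation (in the chart containing $E_1$, writing $s_{C_0}\sim t$) showing that higher vanishing yields a non-normal total space, whereas order-one vanishing produces exactly a $cD_4$ singularity along the section $E_1\cap\varphi^{-1}(C_0)$.

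Finally I would exploit the relative automorphisms of $\mathbb P(V)/T$, realized on $q$ by the coordinate changes $X_1\mapsto X_1+\alpha X_0$ and $X_2\mapsto X_2+\beta X_0+\gamma X_1$ with $\alpha,\beta,\gamma$ sections of the appropriate line bundles, together with fibrewise scalings. The change in $X_1$ eliminates the $X_0X_1$ term, scaling normalizes the $X_0X_2$ coefficient to $s_{C_0}^2$, and the change in $X_2$ absorbs all of the $X_0^2$ coefficient except its nonzero restriction to $C_0$, which is scaled to a constant. This shows $q$ lies in a single $\Aut(\mathbb P(V)/T)$-orbit, namely that of Example \ref{the-example}, so $Y$ is isomorphic to the threefold constructed there. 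Since $X$ and the variety of Example \ref{the-example} are both obtained from $Y$ as its anticanonical model (the contraction of $E_1$), we conclude $X\simeq X_{\mathrm{example}}$, completing the proof of Theorem \ref{theorem-dim3}.
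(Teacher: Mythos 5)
Your proposal follows the same overall strategy as the paper's proof: realize $Y$ inside $\PP(V)$, $V=\varphi_*\sO_Y(-K_Y)$, over $T\simeq \mathbb F_4$, pin down the filtration \eqref{extensionV}--\eqref{extensionW}, and identify the resulting conic bundle with Example \ref{the-example}. Several of your sub-arguments are genuinely different and correct. You obtain $a+b=18$ by matching the class $2c_1(V)+3L$ of the degeneracy divisor of $q$ against the fact (Lemma \ref{lemma-description-Y}) that all singular fibres lie over $C_0$, whereas the paper restricts to $\fibre{\varphi}{C}$ for a general $C\in |4d+C_0|$ and computes degrees on $\mathbb F_6$; you obtain $b=6$, hence $a=12$, from $(-K_Y)^3=64$ (the computation $\zeta^3(2\zeta+\pi^*L)=2(c_1(V)^2-c_2(V))+c_1(V)\cdot L=88-4b$ is correct), whereas the paper uses $h^0(Y,\sO_Y(-K_Y))=35$. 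Moreover, your last two paragraphs go beyond the written proof: the paper stops after computing $[Y]=2\zeta+\varphi^*(-12d)$, whereas you decompose $S^2V\otimes L$ into its line-bundle summands, note that the $X_1X_2$- and $X_2^2$-components have no sections, and normalize $q$ by the relative automorphisms $X_1\mapsto X_1+\alpha X_0$, $X_2\mapsto X_2+\beta X_0+\gamma X_1$ and scalings. This part is sound: indeed $A|_{C_0}=B|_{C_0}=0$ automatically because $\sO_T(12d+4C_0)|_{C_0}$ and $\sO_T(6d+2C_0)|_{C_0}$ have negative degree, and if after these changes the $X_0^2$-coefficient became divisible by $s_{C_0}^2$, the resulting normal form $cX_1^2+\lambda s_{C_0}^2X_0X_2$ would be singular along the divisor $\{X_1=s_{C_0}=0\}$, hence non-normal, exactly as you predict.

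The one genuine gap is the splitting of \eqref{extensionW}: writing ``I expect the non-split alternative to be excluded by the local analysis over $C_0$ together with normality of $Y$'' is a hope, not an argument, and this is precisely the point where the split and non-split cases must be distinguished. The paper settles it as follows: the restriction map $H^1(T,\sO_T(6d+2C_0))\to H^1(C_0,\sO_{C_0}(6d+2C_0))=H^1(\PP^1,\sO_{\PP^1}(-2))$ is injective, so a non-split $W$ restricts to $W\otimes\sO_{C_0}\simeq \sO_{\PP^1}(5)^{\oplus 2}$, giving $V\otimes\sO_{C_0}\simeq\sO_{\PP^1}(5)^{\oplus 2}\oplus\sO_{\PP^1}$; but $\PP(V\otimes\sO_{C_0})$ contains the ruled surface $B_Y\simeq\mathbb F_4$, which would force a quotient $\sO_{\PP^1}(5)^{\oplus 2}\oplus\sO_{\PP^1}\twoheadrightarrow\sO_{\PP^1}(4)\oplus\sO_{\PP^1}$, impossible since $\Hom(\sO_{\PP^1}(5),\sO_{\PP^1}(4))=0$. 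Your instinct that the obstruction is concentrated over $C_0$ is correct, and in fact your own framework yields an even quicker exclusion: if $W\otimes\sO_{C_0}\simeq\sO_{\PP^1}(5)^{\oplus 2}$, then every summand of $(S^2V\otimes L)|_{C_0}$ has negative degree, namely $\sO_{\PP^1}(-2)^{\oplus 3}\oplus\sO_{\PP^1}(-7)^{\oplus 2}\oplus\sO_{\PP^1}(-12)$, so $q$ vanishes identically on $\fibre{\pi}{C_0}$ and $Y$ would contain the whole $\PP^2$-bundle over $C_0$, contradicting the equidimensionality of $\varphi$. Once this step is supplied, your proof is complete.
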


\begin{proof}
Equivalently we show that $Y$ is isomorphic to the variety of the same name in Example \ref{the-example}. By Lemma \ref{lemma-description-Y}, $T \simeq \mathbb F_4$ and the discriminant divisor of $\varphi$
coincides with the negative section $C_0$.
Note also that as a consequence of \eqref{eqn5} we have
\begin{equation}
\label{equationE1}
E_1|_{E_1} \sim_\Q c_1(\sO_T(-1)) \simeq -(6d+C_0).
\end{equation}

Let $C \subset \mathbb F_4$ be a general curve in the linear system $|4d+C_0|$. The curve $C$ is disjoint
from the negative section, so the surface $Y_C :=\fibre{\varphi}{C} \rightarrow C$ is a ruled surface. The curve
$E_1 \cap Y_C$ is a section and \eqref{equationE1} yields
$$
(E_1 \cap Y_C)^2 = E_1^2 \cap \varphi^* C = -6.
$$
Thus we have $Y_C \simeq \mathbb F_6$. Using
\eqref{eqn5} and \eqref{equationE1} we obtain
$$
(-K_Y|_{Y_C})^2 = (4 \mu^* H)^2 \cdot \varphi^* (4d+C_0) = 24.
$$
Since $-K_Y$ has degree two on the fibres of $Y_C \rightarrow C$,
this determines the class of the invertible sheaf $\sO_{Y_C}(-K_Y)$ on $\mathbb F_6$
and we deduce that
$$
18 = \deg (\varphi_*  \sO_{Y_C}(-K_Y))
= \deg((\varphi_* \sO_Y(-K_Y)) \otimes \sO_C).
$$
Therefore we have $a+b=18$ in \eqref{extensionW}. 
Since 
$$
h^0(T, V) = h^0(Y, \sO_Y(-K_Y)) = 35, 
$$
we get $h^0(T, W) \geq 34$ by \eqref{extensionV}. An elementary computation using $a+b=18$ and \eqref{extensionW}
shows that $h^0(T, W)<34$ if $a<8$. Thus we have $a \geq 8$ and therefore
$H^1(T, W)=0$. Thus we have $h^0(T, W)=h^0(T, V)-1=34$ and finally obtain $a=12$.

Thus we have an extension
$$
0 \rightarrow \sO_T(12d+2C_0) \rightarrow W \rightarrow \sO_T(6d) \rightarrow 0.
$$
The extension class is in $H^1(T,  \sO_T(6d+2C_0)) \simeq \C$ and it is not difficult to see that we have an injection
$$
H^1(T,  \sO_T(6d+2C_0)) \hookrightarrow H^1(C_0, \sO_{C_0}(6d+2C_0)) = H^1(\PP^1, \sO_{\PP^1}(-2)).
$$ 
Thus if the extension is not split, we have
$W \otimes \sO_{C_0} \simeq \sO_{\PP^1}(5)^{\oplus 2}$ and hence
$$
V \otimes \sO_{C_0} \simeq \sO_{\PP^1}(5)^{\oplus 2} \oplus \sO_{\PP^1}.
$$
Yet $\PP(V \otimes \sO_{C_0})$ contains the divisor $B_Y$ which is isomorphic to $\mathbb F_4$. Thus we have a quotient $\sO_{\PP^1}(5)^{\oplus 2} \oplus \sO_{\PP^1} \twoheadrightarrow \sO_{\PP^1}(4) \oplus \sO_{\PP^1}$, a contradiction.

We know that $Y \subset \PP(V)$ has class $2 \zeta + \varphi^* (md)$ since
$Y_d \subset \PP(V \otimes \sO_d)$ has class $2 \zeta_d$ by Example \ref{example-del-Pezzo-four}.
Since 
$K_{Y_C}^2=8$ 
and 
$$
K_{Y_C}^2 = (K_{\PP(V)}+Y+ \varphi^* C)^2 \cdot Y \cdot \varphi^* C
$$
a straightforward computation shows that  
$$
[Y] = 2 \zeta + \varphi^* (-12d).
$$
\end{proof}

\providecommand{\bysame}{\leavevmode\hbox to3em{\hrulefill}\thinspace}
\providecommand{\MR}{\relax\ifhmode\unskip\space\fi MR }
\providecommand{\MRhref}[2]{%
  \href{http://www.ams.org/mathscinet-getitem?mr=#1}{#2}
}
\providecommand{\href}[2]{#2}


\begin{thebibliography}{BHPVdV04}

\bibitem[AD14]{AD14}
Carolina Araujo and St{\'e}phane Druel, \emph{On codimension 1 del {Pezzo}
  foliations on varieties with mild singularities}, Math. Ann. \textbf{360}
  (2014), no.~3-4, 769--798 (English).

\bibitem[ADL23]{ADL23}
Kenneth Ascher, Kristin DeVleming, and Yuchen Liu, \emph{K-stability and
  birational models of moduli of quartic {{\(K3\)}} surfaces}, Invent. Math.
  \textbf{232} (2023), no.~2, 471--552 (English).

\bibitem[ARM14]{AR14}
Carolina Araujo and Jos{\'e}~J. Ram{\'o}n-Mar{\'{\i}}, \emph{Flat deformations
  of {{\(\mathbb{P}^n\)}}}, Bull. Braz. Math. Soc. (N.S.) \textbf{45} (2014),
  no.~3, 371--383 (English).

\bibitem[BBI00]{BBI00}
Lucian B{\u{a}}descu, Mauro~C. Beltrametti, and Paltin Ionescu,
  \emph{Almost-lines and quasi-lines on projective manifolds}, Complex analysis
  and algebraic geometry, de Gruyter, Berlin, 2000, pp.~1--27. \MR{MR1760869
  (2001f:14084)}

\bibitem[Ben85]{Ben85}
X.~Benveniste, \emph{Sur le cone des 1-cycles effectifs en dimension 3. ({On}
  the cone of effective 1-cycles in dimension 3)}, Math. Ann. \textbf{272}
  (1985), 257--265 (French).

\bibitem[BHPVdV04]{BHPV04}
Wolf~P. Barth, Klaus Hulek, Chris A.~M. Peters, and Antonius Van~de Ven,
  \emph{Compact complex surfaces}, second ed., Ergebnisse der Mathematik und
  ihrer Grenzgebiete. 3. Folge., vol.~4, Springer-Verlag, Berlin, 2004, doi:
  10.1007/978-3-642-57739-0. \MR{MR2030225 (2004m:14070)}

\bibitem[Bis64]{Bis64}
Errett Bishop, \emph{Conditions for the analyticity of certain sets}, Michigan
  Math. J. \textbf{11} (1964), 289--304. \MR{168801}

\bibitem[BS76]{BS76}
Constantin Banica and Octavian Stanasila, \emph{Algebraic methods in the global
  theory of complex spaces. {Rev}. {English} ed}, Bucuresti: {Editura}
  {Academiei}; {London} etc.: {John} {Wiley}\&{Sons}. 296 p. {Lei} 29.00
  (1976)., 1976.

\bibitem[BS95]{BS95}
Mauro~C. Beltrametti and Andrew~J. Sommese, \emph{The adjunction theory of
  complex projective varieties}, de Gruyter Expositions in Mathematics,
  vol.~16, Walter de Gruyter \& Co., Berlin, 1995. \MR{MR1318687 (96f:14004)}

\bibitem[BW74]{BW74}
D.~M.~jun. Burns and Jonathan~M. Wahl, \emph{Local contributions to global
  deformations of surfaces}, Invent. Math. \textbf{26} (1974), 67--88
  (English).

\bibitem[CP21]{CP21}
Ivan Cheltsov and Yuri Prokhorov, \emph{Del {Pezzo} surfaces with infinite
  automorphism groups}, Algebr. Geom. \textbf{8} (2021), no.~3, 319--357
  (English).

\bibitem[Deb01]{Deb01}
Olivier Debarre, \emph{Higher-dimensional algebraic geometry}, Universitext,
  Springer-Verlag, New York, 2001, doi: 10.1007/978-1-4757-5406-3.
  \MR{MR1841091 (2002g:14001)}

\bibitem[Dem12]{Dem12}
Jean-Pierre Demailly, \emph{Analytic methods in algebraic geometry}, Surveys of
  Modern Mathematics, vol.~1, International Press, Somerville, MA; Higher
  Education Press, Beijing, 2012. \MR{2978333}

\bibitem[EH87]{EH87}
David Eisenbud and Joe Harris, \emph{On varieties of minimal degree. ({A}
  centennial account)}, Algebraic geometry, {Proc}. {Summer} {Res}. {Inst}.,
  {Brunswick}/{Maine} 1985, part 1, {Proc}. {Symp}. {Pure} {Math}. 46, 3-13
  (1987)., 1987.

\bibitem[EV85]{EV85}
H{\'e}l{\`e}ne Esnault and Eckart Viehweg, \emph{Two dimensional quotient
  singularities deform to quotient singularities}, Math. Ann. \textbf{271}
  (1985), 439--449 (English).

\bibitem[Fuj87]{Fuj87}
Takao Fujita, \emph{On polarized manifolds whose adjoint bundles are not
  semipositive}, Algebraic geometry, {S}endai, 1985, Adv. Stud. Pure Math.,
  vol.~10, North-Holland, Amsterdam, 1987, pp.~167--178. \MR{MR946238
  (89d:14006)}

\bibitem[Fuj89]{Fuj89}
\bysame, \emph{Remarks on quasi-polarized varieties}, Nagoya Math. J.
  \textbf{115} (1989), 105--123. \MR{MR1018086 (90i:14045)}

\bibitem[Fuj90]{Fuj90}
\bysame, \emph{Classification theories of polarized varieties}, London
  Mathematical Society Lecture Note Series, vol. 155, Cambridge University
  Press, Cambridge, 1990. \MR{MR1162108 (93e:14009)}

\bibitem[GKKP11]{GKKP11}
Daniel Greb, Stefan Kebekus, S\'andor~J. Kov\'acs, and Thomas Peternell,
  \emph{Differential forms on log canonical spaces}, Publ. Math. Inst. Hautes
  \'Etudes Sci. (2011), no.~114, 87--169. \MR{2854859}

\bibitem[GKP22]{GKP22}
Daniel Greb, Stefan Kebekus, and Thomas Peternell, \emph{Projective flatness
  over klt spaces and uniformisation of varieties with nef anti-canonical
  divisor}, J. Algebr. Geom. \textbf{31} (2022), no.~3, 467--496 (English).

\bibitem[Gro66]{Gro66}
A.~Grothendieck, \emph{{\'E}l{\'e}ments de g{\'e}om{\'e}trie alg{\'e}brique.
  {IV}: {\'E}tude locale des sch{\'e}mas et des morphismes de sch{\'e}mas.
  ({Troisi{\`e}me} partie). {R{\'e}dig{\'e}} avec la colloboration de {J}.
  {Dieudonn{\'e}}}, Publ. Math., Inst. Hautes {\'E}tud. Sci. \textbf{28}
  (1966), 1--255 (French).

\bibitem[Har77]{Har77}
Robin Hartshorne, \emph{Algebraic geometry}, Springer-Verlag, New York, 1977,
  Graduate Texts in Mathematics, No. 52. \MR{MR0463157 (57 \#3116)}

\bibitem[Har80]{Har80}
\bysame, \emph{Stable reflexive sheaves}, Math. Ann. \textbf{254} (1980),
  no.~2, 121--176. \MR{MR597077 (82b:14011)}

\bibitem[Hir75]{Hir75}
Heisuke Hironaka, \emph{Flattening theorem in complex-analytic geometry}, Amer.
  J. Math. \textbf{97} (1975), 503--547. \MR{393556}

\bibitem[HK57]{HK57}
Friedrich Hirzebruch and Kunihiko Kodaira, \emph{On the complex projective
  spaces}, J. Math. Pures Appl. (9) \textbf{36} (1957), 201--216 (English).

\bibitem[HN13]{HN13}
Andreas H{\"o}ring and Carla Novelli, \emph{Mori contractions of maximal
  length}, Publ. Res. Inst. Math. Sci. \textbf{49} (2013), no.~1, 215--228
  (English).

\bibitem[H{\"o}r12]{Hoe12}
Andreas H{\"o}ring, \emph{On a conjecture of {Beltrametti} and {Sommese}}, J.
  Algebr. Geom. \textbf{21} (2012), no.~4, 721--751 (English).

\bibitem[HP10]{HP10}
Paul Hacking and Yuri Prokhorov, \emph{Smoothable del {P}ezzo surfaces with
  quotient singularities}, Compos. Math. \textbf{146} (2010), no.~1, 169--192.
  \MR{2581246}

\bibitem[HP21]{HP21}
Andreas H{\"o}ring and Thomas Peternell, \emph{Stein complements in compact
  {K}\"ahler manifolds}, arXiv preprint \textbf{2111.03303} (2021).

\bibitem[Hwa96]{Hwa96}
Jun-Muk Hwang, \emph{Characterization of the complex projective space by
  holomorphic vector fields}, Math. Z. \textbf{221} (1996), no.~3, 513--519
  (English).

\bibitem[IN03]{IN03}
Paltin Ionescu and Daniel Naie, \emph{Rationality properties of manifolds
  containing quasi-lines}, Internat. J. Math. \textbf{14} (2003), no.~10,
  1053--1080. \MR{MR2031183 (2004m:14109)}

\bibitem[Kar15]{Kar15}
Ilya Karzhemanov, \emph{Fano threefolds with canonical {Gorenstein}
  singularities and big degree}, Math. Ann. \textbf{362} (2015), no.~3-4,
  1107--1142 (English).

\bibitem[Kaw88]{Kaw88}
Yujiro Kawamata, \emph{Crepant blowing-up of {$3$}-dimensional canonical
  singularities and its application to degenerations of surfaces}, Ann. of
  Math. (2) \textbf{127} (1988), no.~1, 93--163. \MR{924674}

\bibitem[KK83]{Kau83}
Ludger Kaup and Burchard Kaup, \emph{Holomorphic functions of several
  variables}, de Gruyter Studies in Mathematics, vol.~3, Walter de Gruyter \&
  Co., Berlin, 1983, An introduction to the fundamental theory, With the
  assistance of Gottfried Barthel, Translated from the German by Michael
  Bridgland. \MR{MR716497 (85k:32001)}

\bibitem[KM98]{KM98}
J{\'a}nos Koll{\'a}r and Shigefumi Mori, \emph{Birational geometry of algebraic
  varieties}, Cambridge Tracts in Mathematics, vol. 134, Cambridge University
  Press, Cambridge, 1998, doi: 10.1017/CBO9780511662560. \MR{MR1658959
  (2000b:14018)}

\bibitem[Kol96]{Kol96}
J{\'a}nos Koll{\'a}r, \emph{Rational curves on algebraic varieties}, Ergebnisse
  der Mathematik und ihrer Grenzgebiete. 3. Folge. A Series of Modern Surveys
  in Mathematics, vol.~32, Springer-Verlag, Berlin, 1996, doi:
  10.1007/978-3-662-03276-3. \MR{MR1440180 (98c:14001)}

\bibitem[Kol97]{Kol97}
J\'{a}nos Koll\'{a}r, \emph{Singularities of pairs}, Algebraic
  geometry---{S}anta {C}ruz 1995, Proc. Sympos. Pure Math., vol.~62, Amer.
  Math. Soc., Providence, RI, 1997, pp.~221--287. \MR{1492525}

\bibitem[Kol13]{Kol13}
J{\'a}nos Koll{\'a}r, \emph{Singularities of the minimal model program. {With}
  the collaboration of {S{\'a}ndor} {Kov{\'a}cs}}, Camb. Tracts Math., vol.
  200, Cambridge: Cambridge University Press, 2013 (English).

\bibitem[Kol22]{Kol22}
\bysame, \emph{Families of varieties of general type}, available on author's
  homepage, 2022.

\bibitem[Kwc92]{Uta92}
J{\'a}nos Koll{\'a}r~(with 14~coauthors), \emph{Flips and abundance for
  algebraic threefolds}, Soci\'et\'e Math\'ematique de France, Paris, 1992,
  Papers from the Second Summer Seminar on Algebraic Geometry held at the
  University of Utah, Salt Lake City, Utah, August 1991, Ast{\'e}risque No. 211
  (1992). \MR{MR1225842 (94f:14013)}

\bibitem[Laz04]{Laz04}
Robert Lazarsfeld, \emph{Positivity in algebraic geometry. {I,II}}, Ergebnisse
  der Mathematik und ihrer Grenzgebiete., vol.~48, Springer-Verlag, Berlin,
  2004. \MR{MR2095471}

\bibitem[Man91]{Man91}
Marco Manetti, \emph{Normal degenerations of the complex projective plane}, J.
  Reine Angew. Math. \textbf{419} (1991), 89--118. \MR{1116920}

\bibitem[MZ88]{MZ88}
M.~Miyanishi and D.~Q. Zhang, \emph{Gorenstein log del {Pezzo} surfaces of rank
  one}, J. Algebra \textbf{118} (1988), no.~1, 63--84 (English).

\bibitem[PCS05]{PCS05}
V.~V. Przhiyalkovskij, I.~A. Chel'tsov, and K.~A. Shramov, \emph{Hyperelliptic
  and trigonal {Fano} threefolds.}, Izv. Math. \textbf{69} (2005), no.~2,
  365--421 (English).

\bibitem[Pin74]{Pin74}
Henry~C. Pinkham, \emph{Deformations of algebraic varieties with {{\(G_m\)}}
  action}, Ast{\'e}risque, vol.~20, Soci{\'e}t{\'e} Math{\'e}matique de France
  (SMF), Paris, 1974 (English).

\bibitem[Pro07]{Pro07}
Yu~G. Prokhorov, \emph{The degree of {{\( \mathbb Q\)}}-{Fano} threefolds}, Sb.
  Math. \textbf{198} (2007), no.~11, 1683--1702 (English).

\bibitem[Pro10]{Pro10}
Yuri Prokhorov, \emph{{{\(\mathbb Q\)}}-{Fano} threefolds of large {Fano}
  index. {I}.}, Doc. Math. \textbf{15} (2010), 843--872 (English).

\bibitem[Rei87]{Rei87}
Miles Reid, \emph{Young person's guide to canonical singularities}, Algebraic
  geometry, Bowdoin, 1985 (Brunswick, Maine, 1985), Proc. Sympos. Pure Math.,
  vol.~46, Amer. Math. Soc., Providence, RI, 1987, pp.~345--414. \MR{MR927963
  (89b:14016)}

\bibitem[{Sho}92]{Sho92}
V.V. {Shokurov}, \emph{{3-fold log flips. Appendix by Yujiro Kawamata: The
  minimal discrepancy coefficients of terminal singularities in dimension
  three.}}, {Russ. Acad. Sci., Izv., Math.} \textbf{40} (1992), no.~1, 95--202
  (English).

\bibitem[Siu89]{Siu89}
Yum-Tong Siu, \emph{Nondeformability of the complex projective space}, J. Reine
  Angew. Math. \textbf{399} (1989), 208--219 (English).

\bibitem[Siu92]{Siu92}
\bysame, \emph{Errata for ''{Nondeformability} of the complex projective
  space''}, J. Reine Angew. Math. \textbf{431} (1992), 65--74 (English).

\bibitem[Wah83]{Wah83}
J.~M. Wahl, \emph{A cohomological characterization of {{\({\mathbb{P}}^ n\)}}},
  Invent. Math. \textbf{72} (1983), 315--322 (English).

\bibitem[Yau78]{Yau77}
Shing-Tung Yau, \emph{On the {Ricci} curvature of a compact {K{\"a}hler}
  manifold and the complex {Monge}-{Amp{\`e}re} equation. {I}}, Commun. Pure
  Appl. Math. \textbf{31} (1978), 339--411 (English).

\end{thebibliography}

\end{document}